\documentclass[twoside,leqno,10pt, A4]{amsart}
\usepackage{amsfonts}
\usepackage{amsmath}
\usepackage{amscd}
\usepackage{amssymb}
\usepackage{amsthm}
\usepackage{amsrefs}
\usepackage{latexsym}
\usepackage{mathrsfs}
\usepackage{bbm}
\usepackage{amscd}
\usepackage{amssymb}
\usepackage{amsthm}
\usepackage{amsrefs}
\usepackage{latexsym}
\usepackage{mathrsfs}
\usepackage{bbm}
\usepackage{enumerate}
\usepackage{graphicx}
\usepackage{color}
\setlength{\textwidth}{18.2cm}
\setlength{\oddsidemargin}{-0.7cm}
\setlength{\evensidemargin}{-0.7cm}
\setlength{\topmargin}{-0.7cm}
\setlength{\headheight}{0cm}
\setlength{\headsep}{0.5cm}
\setlength{\topskip}{0cm}
\setlength{\textheight}{23.9cm}
\setlength{\footskip}{.5cm}

\begin{document}

\newtheorem{theorem}[subsection]{Theorem}
\newtheorem{proposition}[subsection]{Proposition}
\newtheorem{lemma}[subsection]{Lemma}
\newtheorem{corollary}[subsection]{Corollary}
\newtheorem{conjecture}[subsection]{Conjecture}
\newtheorem{prop}[subsection]{Proposition}
\newtheorem{defin}[subsection]{Definition}

\numberwithin{equation}{section}
\newcommand{\mr}{\ensuremath{\mathbb R}}
\newcommand{\mc}{\ensuremath{\mathbb C}}
\newcommand{\dif}{\mathrm{d}}
\newcommand{\intz}{\mathbb{Z}}
\newcommand{\ratq}{\mathbb{Q}}
\newcommand{\natn}{\mathbb{N}}
\newcommand{\comc}{\mathbb{C}}
\newcommand{\rear}{\mathbb{R}}
\newcommand{\prip}{\mathbb{P}}
\newcommand{\uph}{\mathbb{H}}
\newcommand{\fief}{\mathbb{F}}
\newcommand{\majorarc}{\mathfrak{M}}
\newcommand{\minorarc}{\mathfrak{m}}
\newcommand{\sings}{\mathfrak{S}}
\newcommand{\fA}{\ensuremath{\mathfrak A}}
\newcommand{\mn}{\ensuremath{\mathbb N}}
\newcommand{\mq}{\ensuremath{\mathbb Q}}
\newcommand{\half}{\tfrac{1}{2}}
\newcommand{\f}{f\times \chi}
\newcommand{\summ}{\mathop{{\sum}^{\star}}}
\newcommand{\chiq}{\chi \bmod q}
\newcommand{\chidb}{\chi \bmod db}
\newcommand{\chid}{\chi \bmod d}
\newcommand{\sym}{\text{sym}^2}
\newcommand{\hhalf}{\tfrac{1}{2}}
\newcommand{\sumstar}{\sideset{}{^*}\sum}
\newcommand{\sumprime}{\sideset{}{'}\sum}
\newcommand{\sumprimeprime}{\sideset{}{''}\sum}
\newcommand{\sumflat}{\sideset{}{^\flat}\sum}
\newcommand{\shortmod}{\ensuremath{\negthickspace \negthickspace \negthickspace \pmod}}
\newcommand{\V}{V\left(\frac{nm}{q^2}\right)}
\newcommand{\sumi}{\mathop{{\sum}^{\dagger}}}
\newcommand{\mz}{\ensuremath{\mathbb Z}}
\newcommand{\leg}[2]{\left(\frac{#1}{#2}\right)}
\newcommand{\muK}{\mu_{\omega}}
\newcommand{\thalf}{\tfrac12}
\newcommand{\lp}{\left(}
\newcommand{\rp}{\right)}
\newcommand{\Lam}{\Lambda_{[i]}}
\newcommand{\lam}{\lambda}
\newcommand{\af}{\mathfrak{a}}
\newcommand{\sw}{S_{[i]}(X,Y;\Phi,\Psi)}
\newcommand{\lz}{\left(}
\newcommand{\pz}{\right)}
\newcommand{\bfrac}[2]{\lz\frac{#1}{#2}\pz}
\newcommand{\odd}{\mathrm{\ primary}}
\newcommand{\even}{\text{ even}}
\newcommand{\res}{\mathrm{Res}}
\newcommand{\sumn}{\sumstar_{(c,1+i)=1}  w\left( \frac {N(c)}X \right)}
\newcommand{\lab}{\left|}
\newcommand{\rab}{\right|}
\newcommand{\Go}{\Gamma_{o}}
\newcommand{\Ge}{\Gamma_{e}}
\newcommand{\M}{\widehat}
\def\su#1{\sum_{\substack{#1}}}

\theoremstyle{plain}
\newtheorem{conj}{Conjecture}
\newtheorem{remark}[subsection]{Remark}

\newcommand{\pfrac}[2]{\left(\frac{#1}{#2}\right)}
\newcommand{\pmfrac}[2]{\left(\mfrac{#1}{#2}\right)}
\newcommand{\ptfrac}[2]{\left(\tfrac{#1}{#2}\right)}
\newcommand{\pMatrix}[4]{\left(\begin{matrix}#1 & #2 \\ #3 & #4\end{matrix}\right)}
\newcommand{\ppMatrix}[4]{\left(\!\pMatrix{#1}{#2}{#3}{#4}\!\right)}
\renewcommand{\pmatrix}[4]{\left(\begin{smallmatrix}#1 & #2 \\ #3 & #4\end{smallmatrix}\right)}
\def\en{{\mathbf{\,e}}_n}

\newcommand{\ppmod}[1]{\hspace{-0.15cm}\pmod{#1}}
\newcommand{\ccom}[1]{{\color{red}{Chantal: #1}} }
\newcommand{\acom}[1]{{\color{blue}{Alia: #1}} }
\newcommand{\alexcom}[1]{{\color{green}{Alex: #1}} }
\newcommand{\hcom}[1]{{\color{brown}{Hua: #1}} }

\makeatletter
\def\widebreve{\mathpalette\wide@breve}
\def\wide@breve#1#2{\sbox\z@{$#1#2$}%
     \mathop{\vbox{\m@th\ialign{##\crcr
\kern0.08em\brevefill#1{0.8\wd\z@}\crcr\noalign{\nointerlineskip}%
                    $\hss#1#2\hss$\crcr}}}\limits}
\def\brevefill#1#2{$\m@th\sbox\tw@{$#1($}%
  \hss\resizebox{#2}{\wd\tw@}{\rotatebox[origin=c]{90}{\upshape(}}\hss$}
\makeatletter

\title[Ratios conjecture of quartic $L$-functions of prime moduli]{Ratios conjecture of quartic $L$-functions of prime moduli}

%%\date{\today}
\author[P. Gao]{Peng Gao}
\address{School of Mathematical Sciences, Beihang University, Beijing 100191, China}
\email{penggao@buaa.edu.cn}

\author[L. Zhao]{Liangyi Zhao}
\address{School of Mathematics and Statistics, University of New South Wales, Sydney NSW 2052, Australia}
\email{l.zhao@unsw.edu.au}

\begin{abstract}
We apply the method of multiple Dirichlet series to develop $L$-functions ratios conjecture with one shift in both the numerator
and denominator in certain ranges for the family of quartic Hecke $L$-functions of prime moduli over the Gaussian field under the generalized Riemann
hypothesis. As consequences, we evaluate asymptotically the first moment of central values as well as the one-level density of the same family of $L$-functions.
\end{abstract}

\maketitle

\noindent {\bf Mathematics Subject Classification (2010)}: 11M06, 11M41  \newline

\noindent {\bf Keywords}: ratios conjecture, first moment, quartic Hecke $L$-functions, one-level density, low-lying zeros

\section{Introduction}\label{sec 1}

  The $L$-functions ratios conjecture originates from the work of D. W. Farmer \cite{Farmer93} on the shifted moments of the Riemann zeta function and is formulated for general $L$-functions by J. B. Conrey, D. W. Farmer and M. R. Zirnbauer in \cite[Section 5]{CFZ} to make predictions on
the asymptotic behaviors of the sum of ratios of products of shifted $L$-functions. The conjecture has been applied extensively to investigate subjects including the density conjecture of N. Katz and P. Sarnak \cites{KS1, K&S} on the distribution of zeros near the central point of a family of $L$-functions and the mollified moments of $L$-functions, etc. \newline

 In \cite{BFK21}, H. M. Bui, A. Florea and J. P. Keating  established the ratios conjecture on quadratic $L$-functions over function fields for
certain ranges of parameters. On the number fields side, the first result concerning ratios conjecture is given by M. \v Cech \cite{Cech1}, who applied the powerful method of multiple Dirichlet series to study families of quadratic Dirichlet $L$-functions under the assumption of the generalized Riemann hypothesis (GRH).  Work of this kind has been carried out in the setting of function fields by V. Y. Wang \cite{VYWang}. \newline

  In \cite{G&Zhao17}, the authors developed the ratios conjecture with one shift in the numerator
and denominator in certain ranges for families of cubic Hecke $L$-functions of prime moduli over the Eisenstein field $\mq(\frac {-1+\sqrt{3}i}{2})$. Unlike in \cite{BFK21} and \cite{Cech1}, the families of $L$-functions studied in \cite{G&Zhao17} have the feature that their conductors form sparse sets (sets of density zero) in the ring of integers. \newline

 The purpose of this paper is to extend the results in \cite{G&Zhao17} to families of quartic Hecke $L$-functions of prime moduli over the Gaussian field $\mq(i)$. Before stating our result, we introduce some notation first. We fix $K=\mq(i)$ for the Gaussian field throughout the paper to denote $\mathcal{O}_K, U_K$ for the ring of integers and the group of units in $\mathcal{O}_K$, respectively. The symbol $\varpi$ is reserved for a prime in $\mathcal{O}_K$, which means that the ideal $(\varpi)$ generated by $\varpi$ is a prime ideal. Let $\chi$ be a Hecke character of $K$ and we say that $\chi$ is of trivial infinite type if its component at infinite places of $K$ is trivial. We write $L(s,\chi)$ for the $L$-function associated to $\chi$ and we denote $L^{(c)}(s, \chi)$ for the Euler product defining $L(s, \chi)$ but omitting those primes dividing $c$. Further, let $\zeta_{K}(s)$, $\zeta(s)$ denote the Dedekind zeta function of $K$ and the Riemann zeta function, respectively.   \newline
 
Set $\lambda := 1+i$ so that the ideal $(1+i)$ is the only prime ideal that lies above the rational ideal $(2) \in \mz$.  Note that (see the paragraph above \cite[Lemma 8.2.1]{BEW}) every ideal in $\mathcal{O}_K$ co-prime to $\lambda$ has a unique generator congruent to $1$ modulo $\lambda^3$ which is called primary. Let $\chi_{j, \varpi}$ be a quartic Hecke symbol defined in Section \ref{sec2.4} where we fix $j=4$ throughout the paper.  It will be shown later in Section \ref{sec2.4} that $\chi_{j, \varpi}$ is a Hecke character of trivial infinite type when $\varpi \equiv 1 \pmod {16}$.  Here we note that the congruence condition $m \equiv 1 \pmod {16}$ can be determined using ray class group characters.  Recall that for any integral ideal $\mathfrak{m} \in O_K$, the ray class group $h_{\mathfrak{m} }$ is defined to be $I_{\mathfrak{m} }/P_{\mathfrak{m} }$, where $I_{\mathfrak{m} } = \{
\mathcal{A} \in I : (\mathcal{A}, \mathfrak{m} ) = 1 \}$ and $P_{\mathfrak{m} } = \{(a) \in P : a \equiv 1 \bmod \mathfrak{m}  \}$ with $I$ and $P$ denoting the group of
fractional ideals in $K$ and the subgroup of principal ideals, respectively. \newline

 We write $N(n)$  for the norm of any $n \in K$, respectively.  We denote $\mu_K$ for the M\"obius function on $\mathcal O_K$ and let $\Lambda_K(n)$ stand for the von Mangoldt function on $\mathcal{O}_K$ given by
\begin{align*}
    \Lambda_K(n)=\begin{cases}
   \log N(\varpi) \qquad & n=\varpi^k, \text{$\varpi$ prime}, k \geq 1, \\
     0 \qquad & \text{otherwise}.
    \end{cases}
\end{align*}  

   Our main result investigates under GRH the ratios conjecture with one shift in the numerator
and denominator for the family of quartic Hecke $L$-functions of prime moduli. As usual, in the rest of the paper, we use $\varepsilon$ to denote a small positive quantity which may not be the same at each occurrence.
\begin{theorem}
\label{Theorem for all characters}
	With the notation as above and assuming the truth of GRH, let $X$ be a large real number, $w(t)$ a non-negative Schwartz function with $\widehat w(s)$ being its Mellin transform.  Set
\begin{align}
\label{Nab}
			E(\alpha,\beta)=\max\left\{\frac 12, \ \frac 56-\Re(\alpha), \ 1-\Re(\beta),  \ 1-3\Re(\alpha)-2\Re(\beta), \ 1-\frac {13}{15}\Re(\alpha)-\frac {2}{15}\Re(\beta), \ \frac {12}{13}-\frac {11}{13}\Re(\alpha) \right\}.
\end{align}
  We also set $\delta(\alpha)=13/11$ when $\Re(\alpha) <0$ and $\delta(\alpha)=0$ otherwise. Then for $\Re(\alpha) > -1/11$ and $\Re(\beta)>\varepsilon$ such that $E(\alpha,\beta)<1$,
\begin{align}
\label{Asymptotic for ratios of all characters}
\begin{split}	
 \sumstar_{\substack{\varpi}} & \frac {\Lambda_{K}(\varpi)L(\tfrac 12+\alpha, \chi_{j, \varpi})}{L(\tfrac 12+\beta, \chi_{j, \varpi})}w \bfrac {N(\varpi)}X  \\
=&  \frac {\M w(1)X}{\# h_{(16)}} \frac{1-2^{-1/2-\beta}}{1-2^{-1/2-\alpha}}
\frac {\zeta^{(j)}_K(2+4\alpha)}{\zeta^{(j)}_K(2+3\alpha+\beta)} +O\lz(1+|\alpha|)^{\delta(\alpha)+\varepsilon}(1+|\beta|)^{\varepsilon} X^{E(\alpha,\beta)+\varepsilon}\pz,
\end{split}
\end{align}
 where the ``$*$'' on the sum over $\varpi$ means that the sum is restricted to prime elements $\varpi$ of $\mathcal{O}_K $ that are congruent to 1 modulo 16.
\end{theorem}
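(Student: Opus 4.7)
My approach would follow the multiple Dirichlet series strategy developed in the authors' earlier cubic paper \cite{G&Zhao17} and in \v{C}ech's quadratic treatment \cite{Cech1}. The first step is Mellin inversion of the Schwartz cut-off $w$, which converts the left-hand side into a contour integral
\begin{align*}
\frac{1}{2\pi i}\int_{(c)}\widehat{w}(s)\,X^s\,A(s,\alpha,\beta)\,\mathrm{d}s,\qquad A(s,\alpha,\beta)=\sumstar_{\varpi}\frac{\Lambda_{K}(\varpi)}{N(\varpi)^s}\,\frac{L(\tfrac12+\alpha,\chi_{j,\varpi})}{L(\tfrac12+\beta,\chi_{j,\varpi})},
\end{align*}
initially on a vertical line with $c$ large. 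The star on the sum detects the primes $\varpi\equiv 1\pmod{16}$ through orthogonality of the $\#h_{(16)}$ characters of the ray class group modulo $(16)$, which is the ultimate source of the prefactor $1/\#h_{(16)}$ in the main term.

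Next I would expand the ratio of $L$-functions as a double Dirichlet series
\begin{align*}
\frac{L(\tfrac12+\alpha,\chi_{j,\varpi})}{L(\tfrac12+\beta,\chi_{j,\varpi})}=\sum_{m,n}\frac{\mu_K(n)\chi_{j,\varpi}(mn)}{N(m)^{1/2+\alpha}N(n)^{1/2+\beta}},
\end{align*}
interchange it with the sum over $\varpi$, and apply quartic reciprocity in $\mathcal{O}_K$ to convert $\chi_{j,\varpi}(mn)$ into the quartic residue symbol $\chi_{j,mn}(\varpi)$, up to explicit unit factors depending on residues modulo $\lambda^3$ and modulo $16$. The inner prime sum is then essentially $-L'/L(s,\chi_{j,mn})$, the logarithmic derivative of a quartic Hecke $L$-function admitting meromorphic continuation and a functional equation, so $A(s,\alpha,\beta)$ is realised as a multiple Dirichlet series whose analytic structure can be manipulated by contour shifts.

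The main term arises from pole analysis. For those pairs $(m,n)$ with $\chi_{j,mn}$ principal (essentially $mn$ a fourth power), $L(s,\chi_{j,mn})$ has a simple pole at $s=1$, and the associated residue, resummed over admissible $(m,n)$, telescopes into an Euler product that equals the arithmetic factor $\zeta_K^{(j)}(2+4\alpha)/\zeta_K^{(j)}(2+3\alpha+\beta)$: indeed, the local factor at a prime $\varpi\nmid\lambda$ reduces to $(1-N(\varpi)^{-2-3\alpha-\beta})/(1-N(\varpi)^{-2-4\alpha})$. The local contribution at the ramified prime $\lambda=1+i$ (of norm $2$), together with the behaviour of the $m,n$-sums at the prime dividing $j=4$, supplies the extra factor $(1-2^{-1/2-\beta})/(1-2^{-1/2-\alpha})$, while $\widehat{w}(1)X$ is the residue of the Mellin kernel at $s=1$.

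The error term requires the most care. Pushing the $s$-contour past $\Re(s)=1$ and invoking, under GRH, Lindel\"of-type bounds on $L(s,\chi_{j,mn})$, combined with large-sieve inequalities for quartic Hecke characters to average over $m$ and $n$, and second-moment estimates for $L(\tfrac12+\alpha,\chi_{j,\varpi})$ averaged over primes $\varpi$, one should be able to establish the six competing bounds in $E(\alpha,\beta)$: a baseline $1/2$ from the trivial contour choice, the bounds $5/6-\Re(\alpha)$ and $1-\Re(\beta)$ from $L^2$-averages in the numerator and denominator separately, and the hybrid bounds $1-3\Re(\alpha)-2\Re(\beta)$, $1-\tfrac{13}{15}\Re(\alpha)-\tfrac{2}{15}\Re(\beta)$ and $\tfrac{12}{13}-\tfrac{11}{13}\Re(\alpha)$ obtained by balancing contour position against truncation points of the $m$- and $n$-sums. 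The main obstacle will be the regime $\Re(\alpha)<0$, where the Dirichlet series for $L(\tfrac12+\alpha,\chi_{j,\varpi})$ is not absolutely convergent and one must perform an additional contour shift using its approximate functional equation; this is what accounts for the correction factor $(1+|\alpha|)^{\delta(\alpha)+\varepsilon}$ with $\delta(\alpha)=2/11$ when $\Re(\alpha)<0$.
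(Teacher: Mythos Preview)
Your overall architecture---Mellin inversion, expansion of the ratio as a double Dirichlet series, detection of $\varpi\equiv 1\pmod{16}$ via ray class characters, conversion of the inner prime sum to $-L'/L$, and identification of the main term with the residue at $s=1$---matches the paper exactly. Where your proposal diverges from, and in fact falls short of, the actual proof is in the production of the second region of analytic continuation that ultimately generates the exponents in $E(\alpha,\beta)$.

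The paper does \emph{not} obtain the error term by large-sieve inequalities for quartic characters or second-moment bounds as you suggest; no such quartic large sieve of Heath-Brown strength is available. Instead, one applies the functional equation \eqref{fneqnL} to $L(w,\chi_{j,\varpi})$ directly inside $A_{K,j}(s,w,z)$. This introduces the Gauss sum $g_{K,j}(\varpi)$ and, after opening $L(1-w,\overline{\chi}_{j,\varpi})/L(z,\chi_{j,\varpi})$, leaves an inner sum over primes of the shape
\[
H(r,s+w;\psi)=\sum_{\substack{\varpi\ \mathrm{primary}\\ (\varpi,r)=1}}\frac{\Lambda_K(\varpi)\psi(\varpi)g_{K,j}(r,\varpi)}{N(\varpi)^{s+w}},
\]
with $r=mk^3$. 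The entire technical heart of the paper (Sections~\ref{sec: MES}--\ref{section-Patterson}, Propositions~\ref{prop: Fbound} and~\ref{lemma:laundrylist}) is devoted to showing that $H(r,s;\psi)$ admits holomorphic continuation to $\Re(s)>3/2-1/8$ with the bound $H(r,s;\psi)\ll N(r)^{(3/2-s)/2+\varepsilon}$. This in turn rests on the analytic theory of metaplectic Eisenstein series on the $4$-fold cover of $\mathrm{GL}_2$ over $K$, a Vaughan-type decomposition, and---only as a sub-ingredient in the Type~II sums---the \emph{quadratic} large sieve of Theorem~\ref{quadsieve}. The quantity $1/8$ here is what propagates into the exponents $5/6-\Re(\alpha)$ and $12/13-11/13\,\Re(\alpha)$.

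Finally, the six thresholds in $E(\alpha,\beta)$ are not obtained by ``balancing truncation points'' but by Bochner's tube theorem (Theorem~\ref{Bochner}): one has absolute convergence in a first region $S_1$ from GRH bounds on $L'/L$, and in a second region $S_2$ from the Gauss-sum estimate just described; the convex hull $S_3$ of $S_1\cup S_2$ is cut out by exactly the six hyperplanes appearing in \eqref{Nab}, and Proposition~\ref{Extending inequalities} transports the polynomial bounds to $S_3$. The correction $(1+|\alpha|)^{2/11}$ for $\Re(\alpha)<0$ then comes from the $\Gamma$-ratio in the functional equation via \eqref{Stirlingratio}, not from an approximate functional equation.
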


   Our result above is consistent with the prediction from the ratios conjecture on the left-hand side of \eqref{Asymptotic for ratios of all characters}, which can be derived following the recipe given in \cite[Section 5]{CFZ} except that (see also \cite{CS}) the ratios conjecture asserts that \eqref{Asymptotic for ratios of all characters} holds uniformly for $|\Re(\alpha)|< 1/4$, $(\log X)^{-1} \ll \Re(\beta) < 1/4$ and $\Im(\alpha)$, $\Im(\beta) \ll X^{1-\varepsilon}$ with an error term $O(X^{1/2+\varepsilon})$.  In contrast, Theorem \ref{Theorem for all characters} has the advantage that there is no constraint on imaginary parts of $\alpha$ and $\beta$. \newline

   Upon taking the limit $\beta \rightarrow \infty$ on both sides of \eqref{Asymptotic for ratios of all characters} and observing that in this process eliminates the factor involving $\beta$ in the error term, we readily obtain the following result on the first moment of quartic Hecke $L$-functions.
\begin{theorem}
\label{Thmfirstmoment}
		With the notation as above and assuming the truth of GRH, we have for $\Re(\alpha)>-1/11$ and any $\varepsilon>0$,
\begin{align}
\label{Asymptotic for first moment}
\begin{split}			
& \sumstar_{\substack{\varpi}}\Lambda_{K}(\varpi)L(\tfrac 12+\alpha, \chi_{j, \varpi}) w \bfrac {N(\varpi)}X  =  \frac {\M w(1)X}{\# h_{(16)}}
 \frac{\zeta^{(j)}_K(2+4\alpha)}{ 1-2^{-1/2-\alpha}} +O\lz(1+|\alpha|)^{\delta(\alpha)+\varepsilon}X^{E(\alpha)+\varepsilon}\pz,
\end{split}
\end{align}
  where
\begin{align}
\label{Ealpha}
\begin{split}
      &  E(\alpha):=\lim_{\beta \rightarrow \infty}E(\alpha,\beta)=\max\left\{\frac 12, \ \frac 56-\Re(\alpha), \ \frac {12}{13}-\frac {11}{13}\Re(\alpha)  \right\}.
\end{split}
\end{align} 	
\end{theorem}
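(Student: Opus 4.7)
My plan is to derive Theorem~\ref{Thmfirstmoment} from Theorem~\ref{Theorem for all characters} by specialising at a large, $X$-dependent value of $\beta$. Since $L(\tfrac12+\beta,\chi_{j,\varpi})\to 1$ rapidly as $\Re(\beta)\to\infty$, the first moment in \eqref{Asymptotic for first moment} differs little from the ratio sum on the left-hand side of \eqref{Asymptotic for ratios of all characters}, and the task essentially reduces to passing to the limit $\Re(\beta)\to\infty$ in \eqref{Asymptotic for ratios of all characters} while keeping the error under control.

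Concretely, using the Möbius expansion $1/L(\tfrac12+\beta,\chi_{j,\varpi})=\sum_{n}\mu_K(n)\chi_{j,\varpi}(n)/N(n)^{1/2+\beta}$ and isolating the $n=1$ term, I would first establish $|1/L(\tfrac12+\beta,\chi_{j,\varpi})-1|\ll 2^{-\Re(\beta)}$ uniformly in $\varpi$ for $\Re(\beta)\geq 1$. Combined with a polynomial pointwise bound $|L(\tfrac12+\alpha,\chi_{j,\varpi})|\ll N(\varpi)^{A}(1+|\alpha|)^{A}$ (available under GRH in the range $\Re(\alpha)>-1/11$ via convexity and the functional equation), this shows that the left-hand sides of \eqref{Asymptotic for first moment} and \eqref{Asymptotic for ratios of all characters} differ by at most $X^{A+1}(1+|\alpha|)^{A}\,2^{-\Re(\beta)}$. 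A parallel computation on the main term gives $(1-2^{-1/2-\beta})/(1-2^{-1/2-\alpha})\to 1/(1-2^{-1/2-\alpha})$ and $\zeta^{(j)}_K(2+3\alpha+\beta)\to 1$ at rate $2^{-\Re(\beta)}$, so the main term of \eqref{Asymptotic for ratios of all characters} converges to that of \eqref{Asymptotic for first moment} with the same exponentially small correction.

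For the error term I would inspect \eqref{Nab}: once $\Re(\beta)$ exceeds an explicit multiple of $1+|\Re(\alpha)|$, the three $\beta$-dependent entries $1-\Re(\beta)$, $1-3\Re(\alpha)-2\Re(\beta)$ and $1-\tfrac{13}{15}\Re(\alpha)-\tfrac{2}{15}\Re(\beta)$ in the defining maximum all drop below $E(\alpha)$, giving $E(\alpha,\beta)=E(\alpha)$ as in \eqref{Ealpha}. Then choosing $\Re(\beta)=C_0(1+|\Re(\alpha)|)\log X$ with $C_0$ a sufficiently large absolute constant achieves all the required balances simultaneously: both discrepancies from the previous step become arbitrary negative powers of $X$, while $(1+|\beta|)^{\varepsilon}=X^{o(1)}$ is absorbed into the $X^{\varepsilon}$ already present in \eqref{Asymptotic for ratios of all characters}. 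The main obstacle is precisely this balancing of $\beta$—it must be large enough that the Möbius bound, the main-term convergence and the drop-out of the $\beta$-entries of $E(\alpha,\beta)$ all kick in, yet not so large that the factor $(1+|\beta|)^\varepsilon$ spoils the error estimate—but the logarithmic-in-$X$ choice above accomplishes this uniformly in $\alpha$ throughout the range $\Re(\alpha)>-1/11$, yielding \eqref{Asymptotic for first moment}.
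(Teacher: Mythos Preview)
Your proposal is correct and follows essentially the same approach as the paper: the paper simply states that Theorem~\ref{Thmfirstmoment} is obtained by ``taking the limit $\beta\to\infty$ on both sides of \eqref{Asymptotic for ratios of all characters} and observing that in this process eliminates the factor involving $\beta$ in the error term,'' without further detail. Your argument, specialising to an $X$-dependent $\beta$ of size $\asymp\log X$ and checking that the discrepancy terms and the factor $(1+|\beta|)^{\varepsilon}$ are all absorbed, is exactly the rigorous justification behind that one-line remark.
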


 We remark here that setting $\alpha=0$ in \eqref{Asymptotic for first moment} gives an asymptotic formula for the first moment of central values of the family of quartic Hecke $L$-functions of prime moduli with an error term of size $O(X^{12/13+\varepsilon})$.  This lead to an improvement of the result on quartic Hecke $L$-functions given in \cite[Theorem 1.2]{G&Zhao2022-4}. \newline

  Similarly, we set $\alpha \rightarrow \infty$ on both sides of \eqref{Asymptotic for ratios of all characters} and drop the factor involving with $\alpha$ in the error term to obtain the following formula for the negative first moment of quartic Hecke $L$-functions.
\begin{theorem}
\label{Thmnegfirstmoment}
		With the notation as above and assuming the truth of GRH. We have for $\Re(\beta)>0$ and any $\varepsilon>0$,
\begin{align*}
%%\label{Asymptotic for neg first moment}
\begin{split}			
& \sumstar_{\substack{\varpi}}\frac {\Lambda_{K}(\varpi)}{L(\tfrac 12+\beta, \chi_{j, \varpi})} w \bfrac {N(\varpi)}X  =  \frac {\M w(1)X}{\# h_{(16)}}(1-2^{-1/2-\beta})
 +O\lz (1+|\beta|)^{\varepsilon} X^{\max\left\{1/2, 1-\Re(\beta) \right\}+\varepsilon}\pz.
\end{split}
\end{align*}
\end{theorem}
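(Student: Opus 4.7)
The plan is to deduce Theorem \ref{Thmnegfirstmoment} from Theorem \ref{Theorem for all characters} by applying the latter at $\alpha = A$, a real parameter taken large (depending on $X$), and then approximating $L(\tfrac12 + A, \chi_{j,\varpi})$ by $1$. For such $\alpha$, the hypothesis $\Re(\alpha) > -1/11$ holds, $\delta(A) = 0$, and for $A$ sufficiently large the maximum defining $E(A,\beta)$ in \eqref{Nab} reduces to $\max\{1/2,\,1-\Re(\beta)\}$. Furthermore, each of the factors $1 - 2^{-1/2-A}$, $\zeta_K^{(j)}(2+4A)$ and $\zeta_K^{(j)}(2+3A+\beta)$ differs from $1$ by $O(2^{-A})$, so the main term on the right-hand side of \eqref{Asymptotic for ratios of all characters} becomes
\[
\frac{\M w(1)\,X}{\# h_{(16)}}\bigl(1 - 2^{-1/2-\beta}\bigr) + O\lz 2^{-A} X (1+|\beta|)^{\varepsilon}\pz.
\]

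For the left-hand side I would split
\[
\frac{L(\tfrac12+A,\chi_{j,\varpi})}{L(\tfrac12+\beta,\chi_{j,\varpi})} = \frac{1}{L(\tfrac12+\beta,\chi_{j,\varpi})} + \frac{L(\tfrac12+A,\chi_{j,\varpi}) - 1}{L(\tfrac12+\beta,\chi_{j,\varpi})}.
\]
For $A$ large the Dirichlet series for $L(s,\chi_{j,\varpi})$ converges absolutely and yields $|L(\tfrac12+A,\chi_{j,\varpi}) - 1| \ll 2^{-A}$, with the dominant term coming from the ideal $(1+i)$. Under GRH one has the standard bound $|L(\tfrac12+\beta,\chi_{j,\varpi})^{-1}| \ll (N(\varpi)(1+|\beta|))^{\varepsilon}$, uniform in the family. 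Combined with the trivial estimate $\sumstar_\varpi \Lambda_K(\varpi) w(N(\varpi)/X) \ll X$, the second piece contributes $O(2^{-A} X^{1+\varepsilon}(1+|\beta|)^{\varepsilon})$ to the sum.

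Choosing $A = C \log X$ with $C$ sufficiently large (say $C = 2/\log 2$) ensures $2^{-A} X \ll X^{-1}$, absorbing both the main-term error $O(2^{-A} X (1+|\beta|)^{\varepsilon})$ and the approximation error $O(2^{-A} X^{1+\varepsilon}(1+|\beta|)^{\varepsilon})$ into the claimed bound $O((1+|\beta|)^{\varepsilon} X^{\max\{1/2,\,1-\Re(\beta)\}+\varepsilon})$; the factor $(1+A)^{\varepsilon} \ll (\log X)^{\varepsilon}$ arising from the error term of Theorem \ref{Theorem for all characters} is similarly harmless. The one mildly delicate point, and thus the main item to verify, is the uniform GRH-based bound for $|L(\tfrac12+\beta,\chi_{j,\varpi})^{-1}|$ across the family and in $\beta$; this follows from standard arguments once one invokes that $\chi_{j,\varpi}$ is a primitive Hecke character of conductor $(\varpi)$ under the condition $\varpi \equiv 1 \pmod{16}$. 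Everything else is routine bookkeeping.
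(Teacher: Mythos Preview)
Your proposal is correct and follows the same strategy as the paper, which simply states that one ``set[s] $\alpha \rightarrow \infty$ on both sides of \eqref{Asymptotic for ratios of all characters} and drop[s] the factor involving $\alpha$ in the error term.'' Your argument is in fact a more careful implementation of this limiting procedure: rather than literally sending $\alpha\to\infty$ (which would make the factor $(1+|\alpha|)^{\varepsilon}$ in the error blow up), you choose $\alpha=A\asymp\log X$ so that $(1+A)^{\varepsilon}\ll X^{\varepsilon}$ is absorbed, and you bound the resulting discrepancy $L(\tfrac12+A,\chi_{j,\varpi})-1$ directly via absolute convergence together with the GRH bound on $L(\tfrac12+\beta,\chi_{j,\varpi})^{-1}$ supplied in \eqref{Lderboundgen}. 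The paper's phrasing implicitly relies on re-inspecting the proof of Theorem~\ref{Theorem for all characters} to see that the $\alpha$-dependence disappears; your version sidesteps that and works purely from the statement.
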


  We may also differentiate with respect to $\alpha$ in \eqref{Asymptotic for ratios of all characters} and then set $\alpha=\beta=r$ to obtain an asymptotic formula for the smoothed first moment of $L'(\frac{1}{2}+r,\chi_{j, \varpi})/L(\frac{1}{2}+r,\chi_{j, \varpi})$ averaged over $\varpi \equiv 1 \pmod {16}$.
\begin{theorem}
\label{Theorem for log derivatives}
	With the notation as above and assuming the truth of GRH, we have for $0<\varepsilon< \Re(r)<1/2$,
\begin{align*}
%%\label{Sum of L'/L with removed 2-factors}
\begin{split}
		& \sumstar_{\substack{\varpi}}\frac {\Lambda_{K}(\varpi)L'(\tfrac 12+r, \chi_{j, \varpi})}{L(\tfrac 12+r, \chi_{j, \varpi})}w \bfrac {N(\varpi)}X
			= \frac {\M w(1)X }{\# h_{(16)}}\lz\frac{(\zeta^{(j)}_K(2+4r))'}{\zeta^{(j)}_K(2+4r)}-\frac{\log 2 }{2^{1/2+r}-1}\pz+O((1+|r|)^{\varepsilon}X^{1-\Re(r)+\varepsilon}).
\end{split}
\end{align*}
\end{theorem}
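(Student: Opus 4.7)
The plan is to obtain the theorem by differentiating the ratios asymptotic \eqref{Asymptotic for ratios of all characters} in $\alpha$ and then specialising at $\alpha=\beta=r$. Indeed, the identity
\[
\frac{\partial}{\partial\alpha}\,\frac{L(\tfrac{1}{2}+\alpha,\chi_{j,\varpi})}{L(\tfrac{1}{2}+\beta,\chi_{j,\varpi})}\bigg|_{\alpha=\beta=r}=\frac{L'(\tfrac{1}{2}+r,\chi_{j,\varpi})}{L(\tfrac{1}{2}+r,\chi_{j,\varpi})}
\]
shows that the sum we wish to evaluate is exactly the $\alpha$-partial derivative at $\alpha=\beta=r$ of the left-hand side of \eqref{Asymptotic for ratios of all characters}.

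To transfer this differentiation under the $\varpi$-sum while keeping control of the error, I would use Cauchy's integral formula: for any $0<\eta<\Re(r)$,
\[
\frac{\partial F(\alpha,r)}{\partial\alpha}\bigg|_{\alpha=r}=\frac{1}{2\pi i}\oint_{|\alpha-r|=\eta}\frac{F(\alpha,r)}{(\alpha-r)^{2}}\,d\alpha,
\]
applied to both sides of \eqref{Asymptotic for ratios of all characters}. Choosing $\eta$ of size $\varepsilon$ ensures $\Re(\alpha)>0$ on the contour, so that $\delta(\alpha)=0$ in \eqref{Nab}, and the factor $\eta^{-1}$ arising from Cauchy's estimate is absorbed into $X^{\varepsilon}$. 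Next I would check that on this circle $E(\alpha,r)\le 1-\Re(r)+O(\eta)$: the six candidates in \eqref{Nab} become, up to $O(\eta)$, the numbers $\tfrac12$, $\tfrac56-\Re(r)$, $1-\Re(r)$, $1-5\Re(r)$, $1-\Re(r)$, and $\tfrac{12}{13}-\tfrac{11}{13}\Re(r)$, and a direct comparison (the last being $\le 1-\Re(r)$ iff $\Re(r)\le\tfrac12$) gives $E(\alpha,r)=1-\Re(r)$ throughout the contour. The error thus contributes $O((1+|r|)^{\varepsilon}X^{1-\Re(r)+\varepsilon})$, matching the statement.

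For the main term, write the leading factor of \eqref{Asymptotic for ratios of all characters} as $\tfrac{\M w(1)X}{\#h_{(16)}}f(\alpha,\beta)g(\alpha,\beta)$ with
\[
f(\alpha,\beta)=\frac{1-2^{-1/2-\beta}}{1-2^{-1/2-\alpha}},\qquad g(\alpha,\beta)=\frac{\zeta^{(j)}_{K}(2+4\alpha)}{\zeta^{(j)}_{K}(2+3\alpha+\beta)}.
\]
Since $f(r,r)=g(r,r)=1$, logarithmic differentiation gives
\[
\frac{\partial(fg)}{\partial\alpha}\bigg|_{\alpha=\beta=r}=-\frac{\log 2}{2^{1/2+r}-1}+\frac{(\zeta^{(j)}_{K})'(2+4r)}{\zeta^{(j)}_{K}(2+4r)},
\]
in agreement with the claimed main term of the theorem.

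The main obstacle is calibrating Cauchy's radius $\eta$ so that simultaneously (i) $\Re(\alpha)$ stays positive on the contour (killing $\delta(\alpha)$), (ii) the dominant exponent in \eqref{Nab} remains $1-\Re(r)$, and (iii) the prefactor $\eta^{-1}$ is absorbed into $X^{\varepsilon}$. Once the single calibration $\eta\asymp\varepsilon$ is in place, the remainder of the argument is routine bookkeeping.
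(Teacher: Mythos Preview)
Your proposal is correct and follows exactly the route indicated in the paper: differentiate \eqref{Asymptotic for ratios of all characters} in $\alpha$ and specialise $\alpha=\beta=r$. The paper states this in one line without justification, whereas you supply the mechanism (Cauchy's formula over a circle of radius $\eta\asymp\varepsilon$) and verify that the six candidates in \eqref{Nab} are each $\le 1-\Re(r)+O(\eta)$ on the contour; your main-term computation via logarithmic differentiation of $f\cdot g$ is also correct. One cosmetic point: on the circle the maximum $E(\alpha,r)$ is not literally equal to $1-\Re(r)$ but only bounded by $1-\Re(r)+O(\eta)$ (candidates (5) and (6) can exceed $1-\Re(r)$ by $O(\eta)$ when $\Re(\alpha)<\Re(r)$), though you already flag this with your ``up to $O(\eta)$'' remark and it does not affect the conclusion.
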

	
 Now, Theorem \ref{Theorem for log derivatives} makes it possible to compute the one-level density of low-lying zeros of the corresponding families of quartic Hecke $L$-functions.  This can be done following the approach in the proof of \cite[Corollary 1.5]{Cech1} using \cite[Theorem 1.4]{Cech1} for the case of quadratic Dirichlet $L$-functions.  Let $h(x)$ be an even Schwartz function such that its Fourier transform is supported in the interval $[-a,a]$ for some $a>0$.
We define the one-level density of the family of $L$-functions considered in this paper by
\begin{align*}
%%\label{Ddef}
\begin{split}
		D(X;h)=\frac{1}{F(X)}\sumstar_{\substack{ \varpi \odd}}\Lambda_K(\varpi)w \bfrac {N(\varpi)}X\sum_{\gamma_{\varpi, n}}h\bfrac{\gamma_{\varpi, n}\log X}{2\pi},
\end{split}
\end{align*}
   where $\gamma_{\varpi, n}$ runs over the imaginary parts of the non-trivial zeros of $L(s,\chi_{j, \varpi})$, and
\begin{align}
\label{Size of family}
\begin{split}
			F(X)&=\sumstar_{\substack{\varpi \odd}}\Lambda_K(\varpi) w \bfrac {N(\varpi)}X.
\end{split}
\end{align}
	
   With the aid of Theorem \ref{Theorem for log derivatives}, our next result evaluates $D(X;h)$ asymptotically.
\begin{theorem}
\label{Theorem one-level density}
	With the notation as above and assuming the truth of GRH, for any function $w(t)$ that is non-negative and compactly supported on the set of positive real numbers, we have
\begin{align*}
%%\label{Onelevel}
\begin{split}
		D(X;h)= \frac {2\widehat h(1)}{F(X)\log X} & \sumstar_{\substack{\varpi \odd}}\Lambda_K(\varpi) w \bfrac {N(\varpi)}X\log N(\varpi) \\
& \hspace*{-1cm} +\frac{1}{F(X)}\frac {2 \M w(1) }{\# h_{(16)}} \cdot \frac{X}{\log X}\int\limits_{-\infty}^{\infty}h(u)\lz\frac{(\zeta^{(j)}_K(2+ \frac {8\pi iu}{\log X}))'}{\zeta^{(j)}_K(2+\frac {8\pi iu}{\log X})}-\frac{\log 2 }{2^{\frac 12+\frac {2\pi iu}{\log X}}-1}\pz \dif u \\
& \hspace*{-1cm} +\frac 1{\log X}\int\limits^{\infty}_{-\infty}h\lz u \pz \lz \frac {\Gamma'}{\Gamma} \left( \frac 12+\frac {2 \pi iu}{\log X} \right) +\frac {\Gamma'}{\Gamma} \left( \frac 12-\frac {2 \pi iu}{\log X} \right) \pz \dif u+\frac {2\widehat h(1)}{\log X} \log \frac {|D_K|}{4\pi^2} +O(X^{(1+a)/2+\varepsilon}).
\end{split}
\end{align*}
  In particular, when $a<1$, we have
\begin{align*}
%%\label{Onelevelasym}
\begin{split}
		D(X;h)= \int\limits^{\infty}_{-\infty}h(x) \dif x  +\frac{2\widehat h(1)}{\log X} & \lz \frac{2(\zeta^{(j)}_K(2))'}{\zeta^{(j)}_K(2)}-\frac{2\log 2 }{2^{1/2}-1} + \frac {1}{\widehat w(1)} \int\limits^{\infty}_0 w(u)\log u \  \dif u +2\frac {\Gamma'}{\Gamma} \left( \frac 12 \right) +\log \frac {|D_K|}{4\pi^2} \pz \\
		& +O\left( \frac 1{(\log X)^2} \right).
\end{split}
\end{align*}
\end{theorem}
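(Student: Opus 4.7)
My plan is to follow the strategy used by \v{C}ech in \cite[Corollary~1.5]{Cech1} to deduce a one-level density result from an averaged logarithmic derivative formula, as the paper indicates. The starting point is the Riemann--Weyl explicit formula applied to each $L(s,\chi_{j,\varpi})$: under GRH, integrating $(\Lambda'/\Lambda)(s,\chi_{j,\varpi})$ around a tall rectangle enclosing the critical strip and using the functional equation expresses $\sum_{\gamma_{\varpi,n}}h(\gamma_{\varpi,n}\log X/(2\pi))$ as the sum of three pieces: a conductor contribution proportional to $\log(N(\varpi)|D_K|/(4\pi^2))$, an archimedean piece from the Gamma factors, and a line integral $-\frac{2}{\log X}\int h(u)\,\Re(L'/L)(\tfrac12+\tfrac{2\pi iu}{\log X},\chi_{j,\varpi})\,du$ after the substitution $t=2\pi u/\log X$. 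Weighting by $\Lambda_K(\varpi)w(N(\varpi)/X)/F(X)$ and summing over $\varpi\equiv 1\pmod{16}$, the conductor piece produces the terms involving $\widehat h(1)\log N(\varpi)$ and $\widehat h(1)\log(|D_K|/(4\pi^2))$, while the archimedean piece factors out of the $\varpi$-sum to yield the $\Gamma'/\Gamma$ integral.

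The core step is to evaluate the averaged $L'/L$ integral using Theorem~\ref{Theorem for log derivatives}. Since that theorem requires $\Re(r)>\varepsilon>0$ while we a priori need $r=2\pi iu/\log X$ on the critical line, I would shift the $u$-integration contour to height $T>0$ so that $\Re(r)=2\pi T/\log X>0$. The Paley--Wiener theorem, applied via the support $[-a,a]$ of $\widehat h$, guarantees that $h$ is entire of exponential type $2\pi a$ with $|h(u+iT)|\ll_N(1+|u|)^{-N}e^{2\pi aT}$, justifying the shift. Substituting the main term of Theorem~\ref{Theorem for log derivatives} and analytically continuing back to $T\to 0^+$ gives the middle line of the claimed asymptotic after dividing by $F(X)\log X$.

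The main obstacle is controlling the error term under this shift. The error $(1+|r|)^{\varepsilon}X^{1-\Re(r)+\varepsilon}$ from Theorem~\ref{Theorem for log derivatives}, integrated against $h$ on the shifted line, contributes roughly $e^{2\pi aT}X^{1-2\pi T/\log X+\varepsilon}/\log X$. Optimising $T$ so that the Paley--Wiener growth $e^{2\pi aT}$ balances the power-saving $X^{-2\pi T/\log X}$---equivalently, taking $T\asymp(\log X)/2$---yields the stated total error $O(X^{(1+a)/2+\varepsilon})$. GRH enters here to guarantee that no unexpected residues from individual $L(s,\chi_{j,\varpi})$ cross the shifted contour.

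Finally, the simplified asymptotic when $a<1$ follows by Taylor expansion in $1/\log X$. The key simplification is that $\widehat h(1)=0$ since $1\notin[-a,a]$, killing the first and fourth terms of the first formula. Since $(\zeta^{(j)}_K(2+4r))'/\zeta^{(j)}_K(2+4r)$ and $\log 2/(2^{1/2+r}-1)$ are analytic at $r=0$, and $\Gamma'/\Gamma(\tfrac12\pm 2\pi iu/\log X)=\Gamma'/\Gamma(\tfrac12)+O(u^2/(\log X)^2)$ uniformly for $u$ in the effective range, the middle and archimedean terms expand as constants times $\int h(u)\,du/\log X$ plus lower order. Combining with the asymptotic $F(X)^{-1}\sumstar_\varpi\Lambda_K(\varpi)\log N(\varpi)w(N(\varpi)/X)\sim\log X+\widehat w(1)^{-1}\int_0^\infty w(u)\log u\,du$ (from the prime number theorem for primes $\varpi\equiv 1\pmod{16}$) and identifying the main density contribution $\int_{-\infty}^{\infty}h(x)\,dx$ gives the stated expansion with residual error $O((\log X)^{-2})$.
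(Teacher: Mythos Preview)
Your overall strategy---applying the explicit formula to each $L(s,\chi_{j,\varpi})$, averaging over $\varpi$, shifting the $u$-contour off the real axis via the Paley--Wiener bound so that Theorem~\ref{Theorem for log derivatives} applies, and optimising the height to get the error $O(X^{(1+a)/2+\varepsilon})$---is precisely the route the paper intends (it does not give a proof but refers to \cite[Corollary~1.5]{Cech1}), and this part is fine.

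There is, however, a genuine error in your last paragraph. You assert that $\widehat h(1)=0$ because the Fourier transform of $h$ is supported in $[-a,a]$ with $a<1$, but in this paper the hat denotes the \emph{Mellin} transform (see the definition of $\widehat w$ in Theorem~\ref{Theorem for all characters}). Since $h$ is even, $\widehat h(1)=\int_0^\infty h(x)\,dx=\tfrac12\int_{-\infty}^\infty h(x)\,dx$, which is generically nonzero. Your proposed simplification would kill the first and fourth terms of the first formula, leaving no source for the leading term $\int h$ in the second formula (the second and third terms are visibly $O(1/\log X)$). The correct expansion of the first term is
\[
\frac{2\widehat h(1)}{F(X)\log X}\sumstar_\varpi \Lambda_K(\varpi)\,w\bfrac{N(\varpi)}{X}\log N(\varpi)
=2\widehat h(1)+\frac{2\widehat h(1)}{\log X}\cdot\frac{1}{\widehat w(1)}\int_0^\infty w(u)\log u\,du+O\!\left(\frac{1}{(\log X)^2}\right),
\]
using the prime number theorem for $\varpi\equiv 1\pmod{16}$, and $2\widehat h(1)=\int_{-\infty}^\infty h$ supplies the main term. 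One checks the same identity $\int h=2\widehat h(1)$ is what makes the $\Gamma'/\Gamma$ and zeta pieces match the parenthesis in the second formula. Your final sentence already invokes this prime-number-theorem expansion, so the paragraph is internally inconsistent; drop the sentence about $\widehat h(1)=0$ and the derivation goes through.
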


  Our considerations above apply also to quartic Dirichlet $L$-functions. To be more precise, we reserve the letter $p$ for a rational prime.  Let $\Lambda(n)$ denote the usual von Mangoldt function on $\mz$ and $\chi_0$ the principal Dirichlet character. With minor modifications, the proofs of our results on quartic
Hecke $L$-functions carry over to the family of quartic Dirichlet $L$-functions associated with characters whose conductors are rational primes that are norms of primes $\varpi \in \mathcal O_K$ with $\varpi \equiv 1 \pmod {16}$.  The following result gives the ratios conjecture of this family of $L$-functions.
\begin{theorem}
\label{Theorem for all charactersQ}
	With the notation as above and assuming the truth of GRH, let $Q$ be a large real number and $\Phi(t)$ be a non-negative Schwartz function with Mellin transform $\widehat \Phi(s)$.  With $E(\alpha,\beta)$ defined in \eqref{Nab} and $\delta(\alpha)$ in the statement of Theorem \ref{Theorem for all characters}, we have,  for $\Re(\alpha) > -1/11$ and $\Re(\beta)>0$ such that $E(\alpha,\beta)<1$,
\begin{align}
\label{Asymptotic for ratios of all charactersQ}
\begin{split}	
 \sum_{\substack{ p=N(\varpi) \\ \varpi \equiv 1 \bmod {16}} }\ &
\sum_{\substack{\chi \bmod{p} \\ \chi^4 = \chi_0, \ \chi^2 \neq \chi_0}} \Lambda(p) \frac {L(\tfrac{1}{2}+\alpha, \chi)}{L(\tfrac{1}{2}+\beta, \chi)}  \Phi \leg{p}{Q}  \\
=&  \frac {2\M \Phi(1)Q}{\# h_{(16)}} \frac{1-2^{-1/2-\beta}}{1-2^{-1/2-\alpha}}
\frac {\zeta^{(j)}(2+4\alpha)}{\zeta^{(j)}(2+3\alpha+\beta)} +O\lz(1+|\alpha|)^{\delta(\alpha)/2+\varepsilon}(1+|\beta|)^{\varepsilon} Q^{E(\alpha,\beta)+\varepsilon}\pz.
\end{split}
\end{align}
\end{theorem}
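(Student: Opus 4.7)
The plan is to adapt, nearly line by line, the proof of Theorem \ref{Theorem for all characters} by exploiting an explicit dictionary between primitive quartic Dirichlet characters modulo $p$ and the quartic Hecke symbols used throughout the paper. When $p = N(\varpi)$ with $\varpi$ primary and $\varpi \equiv 1 \pmod{16}$, the canonical isomorphism $\mathbb{Z}/p\mathbb{Z} \cong \mathcal{O}_K/(\varpi)$ sends the restriction of $\chi_{j,\varpi}$ to $\mathbb{Z}$ to a primitive quartic Dirichlet character modulo $p$. Since $\overline{\varpi}$ also satisfies the congruence modulo $16$, and $\chi_{j,\overline{\varpi}}|_{\mathbb{Z}} = \overline{\chi_{j,\varpi}|_{\mathbb{Z}}}$, these two restrictions exhaust the primitive characters appearing in the inner sum of \eqref{Asymptotic for ratios of all charactersQ}. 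Pairing the two quartic Dirichlet characters modulo $p$ with the two primes $\varpi, \overline{\varpi}$ above $p$ therefore rewrites the left side of \eqref{Asymptotic for ratios of all charactersQ} as
\[
\sumstar_{\varpi \equiv 1 \bmod 16} \Lambda_K(\varpi)\, \frac{L(\tfrac{1}{2}+\alpha, \chi_{j,\varpi}|_{\mathbb{Z}})}{L(\tfrac{1}{2}+\beta, \chi_{j,\varpi}|_{\mathbb{Z}})}\, \Phi\!\left(\frac{N(\varpi)}{Q}\right),
\]
which has the same outer structure as the sum treated in Theorem \ref{Theorem for all characters}; the factor of $2$ in $2\widehat\Phi(1)Q/\#h_{(16)}$ simply records that every split $p$ is counted via both $\varpi$ and $\overline{\varpi}$.

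From here I would expand the Dirichlet ratio as an absolutely convergent series in an integer variable $k$ coprime to $p$,
\[
\frac{L(s_1, \chi)}{L(s_2, \chi)} = \sum_{(mn,p) = 1} \chi(mn)\, \mu(n)\, m^{-s_1} n^{-s_2},
\]
truncate it by means of the same approximate functional equation used in Theorem \ref{Theorem for all characters}, interchange the order of summation, and feed the resulting inner character sum
\[
\sumstar_{\varpi \equiv 1 \bmod 16} \Lambda_K(\varpi)\, \chi_{j,\varpi}(k)\, \Phi\!\left(\frac{N(\varpi)}{Q}\right)
\]
into the multiple Dirichlet series apparatus built and analytically continued in the proof of Theorem \ref{Theorem for all characters}. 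Because $k \in \mathbb{Z}$ is a particular primary element of $\mathcal{O}_K$, this is a direct specialization of the object already controlled, and extracting its pole at the appropriate vertical line produces the stated main term. The only visible differences from the Hecke setting come from the local Euler factors: at a split rational prime $\ell$ the Dirichlet ratio contributes a single factor built from $\chi(\ell) = \chi_{j,\varpi}(\mathfrak{l})\chi_{j,\varpi}(\overline{\mathfrak{l}})$ rather than two separate factors indexed by $\mathfrak{l}$ and $\overline{\mathfrak{l}}$, and at inert $\ell$ the Dirichlet factor sits at $\ell^{-s}$ rather than $\ell^{-2s}$; these discrepancies change the arithmetic factor from $\zeta_K^{(j)}$ to $\zeta^{(j)}$ in the main term but leave the region of analytic continuation, the polar structure, and the polynomial-growth bounds of the multiple Dirichlet series unchanged. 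The replacement of $\delta(\alpha)$ by $\delta(\alpha)/2$ in the $\alpha$-growth factor reflects the fact that the degree-one Dirichlet $L$-function over $\mathbb{Q}$ has analytic conductor $\ll (1+|\alpha|)$ rather than $(1+|\alpha|)^2$ as for a Hecke $L$-function over $K$, halving the convexity contribution from $\mathrm{Im}\,\alpha$.

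The principal obstacle I anticipate is the bookkeeping of these local Euler factor changes, namely verifying that specialization of the multiple Dirichlet series to rational-integer support yields exactly the arithmetic factor $\zeta^{(j)}(2+4\alpha)/\zeta^{(j)}(2+3\alpha+\beta)$ appearing in \eqref{Asymptotic for ratios of all charactersQ}, and that the polynomial-growth bounds supporting the contour shifts survive the passage from $L_K$ to $L_{\mathbb{Q}}$. Granted this, orthogonality, the approximate functional equation, the analytic continuation of the multiple Dirichlet series, and the optimization of the contour shifts all transport without change from the proof of Theorem \ref{Theorem for all characters}, delivering \eqref{Asymptotic for ratios of all charactersQ} with the same exponent $E(\alpha,\beta)$ defined in \eqref{Nab}.
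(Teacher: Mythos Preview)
Your overall strategy matches the paper's own: identify the two primitive quartic Dirichlet characters modulo $p$ with the restrictions to $\mathbb{Z}$ of $\chi_{j,\varpi}$ and $\chi_{j,\overline{\varpi}}$ (the paper cites \cite[Lemma~2.2]{G&Zhao7} for exactly this correspondence), rewrite the sum over $p$ as a sum over primary primes $\varpi \equiv 1 \pmod{16}$, and then rerun the argument of Theorem~\ref{Theorem for all characters}. Your explanations for the extra factor $2$, for the replacement $\zeta_K^{(j)} \to \zeta^{(j)}$, and for the halving of $\delta(\alpha)$ are correct.

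One correction of detail: the proof of Theorem~\ref{Theorem for all characters} does \emph{not} use an approximate functional equation or any truncation of the $L$-ratio. The method is pure multiple Dirichlet series: Mellin inversion expresses the smoothed sum as $\frac{1}{2\pi i}\int_{(c)} A_{K,j}(s,\tfrac12+\alpha,\tfrac12+\beta)\,X^s\,\widehat w(s)\,\mathrm{d}s$, and the work lies in analytically continuing $A_{K,j}(s,w,z)$ to the region $S_3$ via Bochner's tube theorem (Theorem~\ref{Bochner}), with the second region of absolute convergence supplied by the functional equation~\eqref{fneqnL} for $L(w,\chi_{j,\varpi})$ together with Proposition~\ref{lemma:laundrylist}. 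For the rational analogue you therefore define the corresponding triple series directly, use the functional equation of the Dirichlet $L$-function (whose gamma ratio grows like $(1+|w|)^{1/2-\Re(w)}$ rather than $(1+|w|)^{1-2\Re(w)}$, which is exactly what produces $\delta(\alpha)/2$ in place of $\delta(\alpha)$), and carry out the same Bochner/contour-shift argument. No approximate-functional-equation step is needed or present.
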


  Again, we take $\beta \rightarrow \infty$ on both sides of \eqref{Asymptotic for ratios of all charactersQ} and arrive at the following result concerning the first moment of quartic Dirichlet $L$-functions.
\begin{theorem}
\label{ThmfirstmomentQ}
		With the notation as above and assuming the truth of GRH. Let $E(\alpha)$ be defined as in \eqref{Ealpha}. We have for $\Re(\alpha)>-1/11$ and any $\varepsilon>0$,
\begin{align*}
%%\label{Asymptotic for first momentQ}
\begin{split}			
& \sum_{\substack{ p=N(\varpi) \\ \varpi \equiv 1 \bmod {16}} }\;
\sum_{\substack{\chi \bmod{p} \\ \chi^4 = \chi_0, \ \chi^2 \neq \chi_0}} \Lambda(p) L(\tfrac 12+\alpha, \chi) \Phi \leg{p}{Q} =  \frac {2 \M \Phi(1)Q}{\# h_{(16)}}
\frac{\zeta^{(j)}(2+4\alpha)}{1-2^{-1/2-\alpha}} +O\lz(1+|\alpha|)^{\delta(\alpha)/2+\varepsilon}Q^{E(\alpha)+\varepsilon}\pz.
\end{split}
\end{align*}	
\end{theorem}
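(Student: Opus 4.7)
The plan is to deduce this result from Theorem \ref{Theorem for all charactersQ} by sending $\beta$ to infinity, making rigorous the device invoked after Theorems \ref{Thmfirstmoment} and \ref{Theorem for all charactersQ}. For a real parameter $\beta$, to be chosen equal to $(\log Q)^{2}$ at the end, I would start from the identity
\begin{align*}
L\!\lz\tfrac{1}{2}+\alpha,\chi\pz = \frac{L(\tfrac{1}{2}+\alpha,\chi)}{L(\tfrac{1}{2}+\beta,\chi)} + L\!\lz\tfrac{1}{2}+\alpha,\chi\pz\lz 1-\frac{1}{L(\tfrac{1}{2}+\beta,\chi)}\pz,
\end{align*}
and sum both sides over the quartic characters $\chi \bmod p$ appearing in \eqref{Asymptotic for ratios of all charactersQ}, weighted by $\Lambda(p)\Phi(p/Q)$.

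The first piece is handled directly by Theorem \ref{Theorem for all charactersQ}. On the main-term side, the factor $(1-2^{-1/2-\beta})/\zeta^{(j)}(2+3\alpha+\beta)$ tends to $1$ with error $O(2^{-\Re(\beta)/2})$, so the main term collapses to the claimed $\tfrac{2\M\Phi(1)Q}{\#h_{(16)}}\cdot\tfrac{\zeta^{(j)}(2+4\alpha)}{1-2^{-1/2-\alpha}}$ up to this correction. On the error-term side, the three quantities $1-\Re(\beta)$, $1-3\Re(\alpha)-2\Re(\beta)$ and $1-\tfrac{13}{15}\Re(\alpha)-\tfrac{2}{15}\Re(\beta)$ inside the maximum defining $E(\alpha,\beta)$ in \eqref{Nab} all fall below $1/2$ once $\Re(\beta)$ is sufficiently large, leaving the reduced maximum $E(\alpha)$ of \eqref{Ealpha}; the prefactor $(1+|\beta|)^{\varepsilon}=(\log Q)^{2\varepsilon}$ is absorbed into $Q^{\varepsilon}$.

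The auxiliary piece of the identity above must be shown negligible. The Euler product for $L(\tfrac{1}{2}+\beta,\chi)$ converges absolutely once $\Re(\beta)$ is large, yielding the uniform bound $|1-1/L(\tfrac{1}{2}+\beta,\chi)| \ll 2^{-\Re(\beta)/2}$. Combined with a convexity estimate $|L(\tfrac{1}{2}+\alpha,\chi)| \ll p^{O(1)}$ (valid under GRH for $\Re(\alpha)>-1/11$ with considerable slack) and the trivial count of primes $p\leq Q$, this auxiliary contribution is at most $O(2^{-(\log Q)^{2}/2}Q^{O(1)})$, which is smaller than any fixed negative power of $Q$.

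The only real obstacle is the rigorous interchange of the limit $\beta\to\infty$ with the sum: the asymptotic in Theorem \ref{Theorem for all charactersQ} is not uniform as $|\beta|\to\infty$, so one cannot literally pass to the limit in \eqref{Asymptotic for ratios of all charactersQ}. Taking $\beta$ to grow slowly with $Q$ sidesteps this by making the crude Euler-product estimate dominate the discrepancy between $1/L(\tfrac{1}{2}+\beta,\chi)$ and $1$, while leaving the $\beta$-dependent factors in the error term of Theorem \ref{Theorem for all charactersQ} harmless. Everything else is bookkeeping.
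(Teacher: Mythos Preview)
Your proposal is correct and follows the same approach as the paper, which simply instructs one to take $\beta \to \infty$ in Theorem~\ref{Theorem for all charactersQ}; your choice of $\beta = (\log Q)^2$ together with the decomposition and the crude Euler-product bound on $1 - 1/L(\tfrac12+\beta,\chi)$ is exactly the standard way to make that informal passage rigorous. The paper's one-line justification (``eliminates the factor involving $\beta$ in the error term'') is terser than yours but amounts to the same maneuver, so there is nothing materially different between the two.
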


   Similarly,  taking $\alpha \rightarrow \infty$ on both sides of \eqref{Asymptotic for ratios of all charactersQ} leads to the following negative first moment result.
\begin{theorem}
\label{ThmnegfirstmomentQ}
		With the notation as above and assuming the truth of GRH, we have for $\Re(\beta)>0$ and any $\varepsilon>0$,
\begin{align*}
%%\label{Asymptotic for neg first momentQ}
\begin{split}			
& \sum_{\substack{ p=N(\varpi) \\ \varpi \equiv 1 \bmod {16}} }\;
\sum_{\substack{\chi \bmod{p} \\ \chi^4 = \chi_0, \ \chi^2 \neq \chi_0}} \frac {\Lambda(p)}{L(\tfrac 12+\beta, \chi)} \Phi \bfrac {p}Q  =  \frac {2 \M \Phi(1)Q}{\# h_{(16)}}(1-2^{-1/2-\beta})
 +O\lz (1+|\beta|)^{\varepsilon} Q^{\max\left\{1/2, 1-\Re(\beta) \right\}+\varepsilon}\pz.
\end{split}
\end{align*}
\end{theorem}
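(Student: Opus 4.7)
The plan is to deduce Theorem \ref{ThmnegfirstmomentQ} from the ratios-conjecture identity \eqref{Asymptotic for ratios of all charactersQ} of Theorem \ref{Theorem for all charactersQ} by letting the shift $\alpha$ tend to $+\infty$ along the positive real axis. The point is that for large positive real $\alpha$ the numerator $L(\tfrac12+\alpha,\chi)$ of the ratio is essentially $1$, so the sum on the left of \eqref{Asymptotic for ratios of all charactersQ} collapses to the negative first moment we seek.

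Concretely, I would fix a large real parameter $A>0$ and apply Theorem \ref{Theorem for all charactersQ} with $\alpha=A$. Since the Dirichlet series of $L(\tfrac12+A,\chi)$ converges absolutely, one has
\begin{align*}
L(\tfrac12+A,\chi)=1+O(2^{-A})
\end{align*}
uniformly in $\chi$, and invoking the standard GRH bound $1/L(\tfrac12+\beta,\chi)\ll (p(1+|\beta|))^{\varepsilon}$ valid for $\Re(\beta)>0$ shows that the left-hand side of \eqref{Asymptotic for ratios of all charactersQ} differs from the left-hand side of Theorem \ref{ThmnegfirstmomentQ} by $O\bigl(2^{-A}(1+|\beta|)^{\varepsilon}Q^{1+\varepsilon}\bigr)$. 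The right-hand side is handled similarly: a direct Euler-product expansion gives
\begin{align*}
\frac{1-2^{-1/2-\beta}}{1-2^{-1/2-A}}\cdot\frac{\zeta^{(j)}(2+4A)}{\zeta^{(j)}(2+3A+\beta)}=\bigl(1-2^{-1/2-\beta}\bigr)\bigl(1+O(2^{-A})\bigr),
\end{align*}
so the displayed main term in \eqref{Asymptotic for ratios of all charactersQ} differs from the target main term of Theorem \ref{ThmnegfirstmomentQ} by $O(2^{-A}Q)$.

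Next I would verify that for $A$ larger than an absolute constant each of the four candidates inside the definition \eqref{Nab} of $E(\alpha,\beta)$ that involves $\Re(\alpha)$ falls strictly below $\max\{\tfrac12,\,1-\Re(\beta)\}$, so that $E(A,\beta)=\max\{\tfrac12,\,1-\Re(\beta)\}$ exactly, while $\delta(A)=0$. Choosing $A=C\log Q$ with $C$ sufficiently large then pushes the two approximation errors above down to $\ll Q^{1-C\log 2+\varepsilon}\ll Q^{1/2}$, absorbs the factor $(1+A)^{\varepsilon}=(\log Q)^{\varepsilon}$ into $Q^{\varepsilon}$, and reduces the genuine error from Theorem \ref{Theorem for all charactersQ} to $O\bigl((1+|\beta|)^{\varepsilon}Q^{\max\{1/2,\,1-\Re(\beta)\}+\varepsilon}\bigr)$, which is exactly the bound asserted.

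The only nontrivial input is the uniform GRH bound for $1/L(\tfrac12+\beta,\chi)$ through the family (uniform also in $\Im(\beta)$), which is a standard consequence of GRH; the remainder of the argument is mechanical bookkeeping of the error terms produced in the limit $A\to\infty$. There is therefore no real obstacle beyond choosing the truncation parameter $A$ correctly as a function of $Q$, which is the only step where a slip would degrade the final exponent.
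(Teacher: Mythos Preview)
Your proposal is correct and follows exactly the approach the paper takes: the paper derives Theorem~\ref{ThmnegfirstmomentQ} simply by ``taking $\alpha\to\infty$ on both sides of \eqref{Asymptotic for ratios of all charactersQ}'' and dropping the $\alpha$-dependent factor from the error term. Your version is in fact more careful than the paper's one-line justification, since you make the limiting process rigorous by choosing the explicit truncation $A=C\log Q$ and tracking the resulting approximation errors.
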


Now differentiating with respect to $\alpha$ in \eqref{Asymptotic for ratios of all charactersQ} and setting $\alpha=\beta=r$ leads to an asymptotic formula for the smoothed first moment of the logarithmic derivative of the $L$-functions under our consideration.
 \begin{theorem}
\label{Theorem for log derivativesQ}
	With the notation as above and assuming the truth of GRH, we have for $0<\varepsilon< \Re(r)<1/2$,
\begin{align*}
%%\label{Sum of L'/L with removed 2-factorsQ}
\begin{split}
		& \sum_{\substack{ p=N(\varpi) \\ \varpi \equiv 1 \bmod {16}} }\;
\sum_{\substack{\chi \bmod{p} \\ \chi^4 = \chi_0, \chi^2 \neq \chi_0}} \frac {\Lambda(p)L'(\tfrac 12+r, \chi)}{L(\tfrac 12+r, \chi)}\Phi \bfrac {p}Q
			= \frac {2 \M \Phi(1)Q }{\# h_{(16)}}\lz\frac{(\zeta^{(j)}(2+4r))'}{\zeta^{(j)}(2+4r)}-\frac{\log 2 }{2^{1/2+r}-1}\pz+O((1+|r|)^{\varepsilon}Q^{1-\Re(r)+\varepsilon}).
\end{split}
\end{align*}
\end{theorem}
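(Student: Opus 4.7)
The plan is to deduce Theorem \ref{Theorem for log derivativesQ} directly from Theorem \ref{Theorem for all charactersQ} by a contour-integration argument that replaces the shift differentiation with a Cauchy integral. Write $R(\alpha,\beta)$ for the left-hand side of \eqref{Asymptotic for ratios of all charactersQ}. The key observation is that
\[
\frac{\partial}{\partial\alpha}\frac{L(\tfrac{1}{2}+\alpha,\chi)}{L(\tfrac{1}{2}+\beta,\chi)}\bigg|_{\alpha=\beta=r}
= \frac{L'(\tfrac{1}{2}+r,\chi)}{L(\tfrac{1}{2}+r,\chi)},
\]
so the sum appearing in Theorem \ref{Theorem for log derivativesQ} is exactly $(\partial_\alpha R)(r,r)$. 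Hence it suffices to differentiate both sides of \eqref{Asymptotic for ratios of all charactersQ} in $\alpha$ at $\alpha=\beta=r$.

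I would carry this out via Cauchy's integral formula on a small circle $\mathcal{C}\colon|\alpha-r|=\delta$ with $\delta = Q^{-\varepsilon}$, giving
\[
\frac{\partial}{\partial\alpha}R(\alpha,r)\bigg|_{\alpha=r} = \frac{1}{2\pi i}\oint_{\mathcal{C}} \frac{R(\alpha,r)}{(\alpha-r)^2}\,\dif\alpha.
\]
For \eqref{Asymptotic for ratios of all charactersQ} to apply uniformly on $\mathcal{C}$, one must verify that $\Re(\alpha)>-1/11$ and $E(\alpha,r)<1$ hold along the contour. Both are automatic: the first since $\Re(r)>\varepsilon>0$ and $\delta$ is small, and at $\alpha=\beta=r$ with $0<\Re(r)<1/2$, inspection of \eqref{Nab} shows $E(r,r)=1-\Re(r)<1$, which persists on a sufficiently small disk. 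Moreover, $\Re(\alpha)>0$ on $\mathcal{C}$ forces the exponent $\delta(\alpha)$ in \eqref{Asymptotic for ratios of all charactersQ} to vanish, so no additional $|\alpha|$-growth enters the analysis.

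Substituting \eqref{Asymptotic for ratios of all charactersQ} into the contour integral, the main term
\[
M(\alpha,\beta)=\frac{2\widehat\Phi(1)Q}{\#h_{(16)}}\cdot\frac{1-2^{-1/2-\beta}}{1-2^{-1/2-\alpha}}\cdot\frac{\zeta^{(j)}(2+4\alpha)}{\zeta^{(j)}(2+3\alpha+\beta)}
\]
satisfies $M(r,r)=2\widehat\Phi(1)Q/\#h_{(16)}$, and both quotients in $M$ equal $1$ on the diagonal. Logarithmic differentiation therefore collapses the product rule to adding the two contributions
\[
\frac{\partial}{\partial\alpha}\log\frac{1-2^{-1/2-\beta}}{1-2^{-1/2-\alpha}}\bigg|_{\alpha=\beta=r}=-\frac{\log 2}{2^{1/2+r}-1},
\]
\[
\frac{\partial}{\partial\alpha}\log\frac{\zeta^{(j)}(2+4\alpha)}{\zeta^{(j)}(2+3\alpha+\beta)}\bigg|_{\alpha=\beta=r}=(4-3)\frac{(\zeta^{(j)})'(2+4r)}{\zeta^{(j)}(2+4r)},
\]
which together reproduce the bracketed expression in the statement of Theorem \ref{Theorem for log derivativesQ}.

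For the error term, on $\mathcal{C}$ the bound from \eqref{Asymptotic for ratios of all charactersQ} reads $O((1+|r|)^\varepsilon Q^{E(\alpha,r)+\varepsilon})$; since $|E(\alpha,r)-(1-\Re(r))|\ll\delta$ and the Cauchy kernel $(\alpha-r)^{-2}$ costs a factor $\delta^{-1}$ after integration around $\mathcal{C}$, choosing $\delta=Q^{-\varepsilon}$ yields the claimed remainder $O((1+|r|)^\varepsilon Q^{1-\Re(r)+\varepsilon})$. The main obstacle is thus purely bookkeeping rather than any fresh analytic input: $\delta$ must be small enough to keep $\mathcal{C}$ inside the region where $E(\alpha,r)<1$ (which is why the hypothesis $\Re(r)>\varepsilon$ is needed, ruling out $r$ arbitrarily close to the imaginary axis), yet large enough that Cauchy's kernel does not inflate the error past $Q^{1-\Re(r)+\varepsilon}$; the choice $\delta=Q^{-\varepsilon}$ balances both constraints and completes the deduction.
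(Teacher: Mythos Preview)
Your proposal is correct and follows exactly the approach indicated in the paper: differentiate the ratios asymptotic \eqref{Asymptotic for ratios of all charactersQ} with respect to $\alpha$ and set $\alpha=\beta=r$. The paper states this derivation in one line before Theorem~\ref{Theorem for log derivativesQ} and refers the details to the cubic case in \cite{G&Zhao17}; your use of Cauchy's integral formula on a circle of radius $Q^{-\varepsilon}$ is the standard device to make the differentiation of the $O$-term rigorous, and your verification that $E(r,r)=1-\Re(r)$ dominates in \eqref{Nab} for $0<\Re(r)<1/2$ and that $\delta(\alpha)=0$ on the contour is accurate.
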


Moreover, a one-level density of low-lying zeros of this families of $L$-functions can be derived from Theorem \ref{Theorem for log derivativesQ}.  With $h(x)$ as in Theorem~\ref{Theorem one-level density}, set
\begin{align*}
%%\label{DdefQ}
\begin{split}
		D_{\mq}(Q;h)=\frac{1}{F_{\mq}(Q)}\sum_{\substack{ p=N(\varpi) \\ \varpi \equiv 1 \bmod {16}} }\;
\sum_{\substack{\chi \bmod{p} \\ \chi^4 = \chi_0, \chi^2 \neq \chi_0}} \Lambda(p) \Phi \bfrac {p}Q\sum_{\gamma_{\chi, n}}h\bfrac{\gamma_{\chi, n}\log Q}{2\pi},
\end{split}
\end{align*}
   where $\gamma_{\chi, n}$ runs over the imaginary parts of the non-trivial zeros of $L(s,\chi)$ and
\begin{align*}
%%\label{Size of familyQ}
\begin{split}
		F_{\mq}(Q)&=\sum_{\substack{ p=N(\varpi) \\ \varpi \equiv 1 \bmod {16}} }\;
\sum_{\substack{\chi \bmod{p} \\ \chi^4 = \chi_0, \chi^2 \neq \chi_0}} \Lambda(p) \Phi \bfrac {p}Q.
\end{split}
\end{align*}

The final result of this section with the following asymptotic evaluation of $D_{\mq}(X;h)$.
\begin{theorem}
\label{Theorem one-level densityQ}
	With the notation as above and assuming the truth of GRH, for any function $\Phi(t)$ that is non-negative and compactly supported on the set of positive real numbers, we have
\begin{align*}
%%\label{OnelevelQ}
\begin{split}
		D_{\mq}(Q;h)=\frac {2\widehat h(1)}{F_{\mq}(Q)\log Q} & \sum_{\substack{ p=N(\varpi) \\ \varpi \equiv 1 \bmod {16}} }\;
\sum_{\substack{\chi \bmod{p} \\ \chi^4 = \chi_0, \chi^2 \neq \chi_0}} \Lambda(p) \Phi \bfrac {p}Q \log p \\
& \hspace*{-1cm} +\frac{1}{F_{\mq}(Q)}\frac {4 \M w(1) }{ \# h_{(16)}} \cdot \frac{Q}{\log Q}\int\limits_{-\infty}^{\infty}h(u)\lz\frac{(\zeta^{(j)}(2+ \frac {8\pi iu}{\log Q}))'}{\zeta^{(j)}(2+\frac {8\pi iu}{\log Q})}-\frac{\log 2 }{2^{1/2+2\pi iu/\log Q}-1}\pz \dif u \\
& \hspace*{-1cm} +\frac 1{2\log Q}\int\limits^{\infty}_{-\infty}h\lz u \pz \lz \frac {\Gamma'}{\Gamma} \left( \frac 14+\frac {\pi iu}{\log Q} \right) +\frac {\Gamma'}{\Gamma} \left(\frac 14-\frac {\pi iu}{\log Q} \right) \pz \dif u+\frac {2\widehat h(1)}{\log Q} \log \frac {1}{\pi} +O(Q^{(1+a)/2+\varepsilon}).
\end{split}
\end{align*}
  In particular, when $a<1$, we have
\begin{align*}
%%\label{OnelevelasymQ}
\begin{split}
D_{\mq}(Q;h)= \int\limits^{\infty}_{-\infty}h(x) \dif x +\frac{2\widehat h(1)}{\log Q} & \lz \frac{2(\zeta^{(j)}(2))'}{\zeta^{(j)}(2)}-\frac{2\log 2 }{2^{1/2}-1} + \frac {1}{\widehat w(1)} \left( \int\limits^{\infty}_0w(u)\log u \dif u \right)+\frac {\Gamma'}{\Gamma}\left( \frac 14 \right) +\log \frac {1}{\pi} \pz  \\
& +O\left(\frac 1{(\log Q)^2}\right).
\end{split}
\end{align*}
\end{theorem}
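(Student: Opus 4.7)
The plan is to prove Theorem~\ref{Theorem one-level densityQ} by combining the Guinand--Weil explicit formula for Dirichlet $L$-functions with our already-established Theorem~\ref{Theorem for log derivativesQ}, in the spirit of \v{C}ech's approach used for Theorem~\ref{Theorem one-level density}.  For each primitive even quartic character $\chi$ of prime conductor $p$ in the family, the explicit formula produces an identity of the form
\[
\sum_{\gamma_{\chi,n}}h\!\left(\frac{\gamma_{\chi,n}\log Q}{2\pi}\right)
 = A_{1}(\chi;p) + A_{2}(\chi;p) + A_{3}(\chi;p) - S(\chi),
\]
where $A_{1}$ is the conductor contribution (a multiple of $\log p/\log Q$), $A_{2}$ is the archimedean integral $\tfrac{1}{2\log Q}\int h(u)[\Gamma'/\Gamma(\tfrac14 + \pi iu/\log Q)+\mathrm{conj.}]\,du$, $A_{3}$ is the $\log(1/\pi)$ piece, and
\[
S(\chi)=-\frac{1}{\log Q}\int_{-\infty}^{\infty}\!h(u)\!\left(\frac{L'}{L}\!\left(\tfrac12+\tfrac{2\pi iu}{\log Q},\chi\right)+\frac{L'}{L}\!\left(\tfrac12-\tfrac{2\pi iu}{\log Q},\bar\chi\right)\right)du
\]
is obtained from the standard sum over prime powers by Mellin inversion.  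Multiplying by $\Lambda(p)\Phi(p/Q)$, summing over $\chi$ and $p$, and dividing by $F_{\mq}(Q)$, the three pieces $A_{1}$, $A_{2}$, $A_{3}$ reproduce respectively the first, third and fourth main terms of the stated asymptotic without further input.

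The heart of the argument is the evaluation of the averaged $S(\chi)$.  After interchanging the $u$-integration with the outer sums over $\chi$ and $p$, I would shift the $u$-contour downward so that $r:=2\pi iu/\log Q$ has positive real part, apply Theorem~\ref{Theorem for log derivativesQ} to the inner sum, and recognize the resulting main term as the second main term of the theorem.  Since $\widehat h$ is supported in $[-a,a]$, the function $h$ is entire of exponential type $2\pi a$, and Paley--Wiener bounds permit a shift by $\operatorname{Im} u \asymp -(1-a)\log Q/(4\pi)$.  Balancing the exponential growth of $h$ on the shifted contour against the $Q^{1-\operatorname{Re}(r)+\varepsilon}$ error from Theorem~\ref{Theorem for log derivativesQ}, and then dividing by $F_{\mq}(Q)\asymp Q$, delivers the overall error $O(Q^{(1+a)/2+\varepsilon})$ claimed in the statement.

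For the $a<1$ specialization, the leading $\int h(x)\,dx$ emerges from $A_{1}$ via partial summation: under GRH,
\[
\sum_{\substack{p=N(\varpi)\\ \varpi\equiv 1\,(16)}}\sum_{\chi}\Lambda(p)\log p\,\Phi(p/Q) = F_{\mq}(Q)\log Q + \frac{2Q}{\#h_{(16)}}\int_{0}^{\infty}\!\Phi(u)\log u\,du + O(Q^{1-\delta})
\]
for some $\delta>0$, so the first main term contributes $\int h(x)\,dx$ to leading order plus a $(\log Q)^{-1}$-order correction containing $\widehat\Phi(1)^{-1}\int_{0}^{\infty}\Phi(u)\log u\,du$.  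Evaluating the remaining $u$- and $r$-integrals at their central values $0$ supplies the constants involving $(\zeta^{(j)}(2))'/\zeta^{(j)}(2)$, $\log 2/(2^{1/2}-1)$, $\Gamma'/\Gamma(\tfrac14)$ and $\log(1/\pi)$, while the $a<1$ hypothesis automatically absorbs the general error $Q^{(1+a)/2+\varepsilon}/F_{\mq}(Q)$ into $O((\log Q)^{-2})$.  I expect the main technical obstacle to be the contour shift in the second paragraph: ensuring that Theorem~\ref{Theorem for log derivativesQ} is applied with explicit uniformity in $\operatorname{Im}(r)$, that the potential pole of the $\zeta^{(j)}$-ratio at $r=0$ is handled correctly (and any residue crossed is accounted for), and that the balance producing $Q^{(1+a)/2+\varepsilon}$ survives after weighting by the Paley--Wiener growth of $h$.
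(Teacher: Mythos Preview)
Your proposal is correct and follows essentially the same route as the paper, which does not spell out this proof but refers (before Theorem~\ref{Theorem one-level density} and in Section~\ref{sec: mainthm}) to \v{C}ech's method \cite{Cech1} applied via Theorem~\ref{Theorem for log derivativesQ}. The explicit formula for even primitive Dirichlet $L$-functions, the contour shift in the $u$-integral to $\Re(r)$ close to $1/2$, and the Paley--Wiener balance $Q^{a\Re(r)}\cdot Q^{1-\Re(r)+\varepsilon}$ yielding $Q^{(1+a)/2+\varepsilon}$ are exactly the ingredients the paper has in mind.
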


  The proofs of Theorem \ref{Theorem for all characters}--\ref{Theorem one-level densityQ} use ideas similar to those applied to the studies on the cubic characters in \cite{G&Zhao17}.  We shall use the method of multiple Dirichlet series to establish sufficient analytical properties of the series involved to derive the desired results. However, the contrast will become saliently evident that the proofs for these quartic characters require considerably more efforts than those of the cubic ones. The main reason is that the process needs one to estimate a Dirichlet series formed from quartic Gauss sums given by 
\begin{align}
\label{h}
   H(r,s;\psi) :=\sum_{\substack{\varpi \text{ primary} \\ (\varpi,r)=1}}\frac {\Lambda_K(\varpi)\psi(\varpi)g_{K,j}(r,\varpi)}{N(\varpi)^s}, 
\end{align}
  where $g_{K,j}(r,\varpi)$ is a quartic Gauss sum defined in Section \ref{sec2.4}, $\psi$ is any ray class character modulo $16$, and $r$ is any primary element in $\mathcal O_K$. Understanding $H(r,s;\psi)$ is ultimately tied to estimating
\begin{align}
\label{Fadef}
F_a(z,r,\psi) := \sum_{\substack{b \odd \\ (r,b)=1, a \mid b\\ N(b) \le z}} \tilde{g}_\psi(r,b), 
\end{align} 
   where we write for simplicity, 
\begin{align}
\label{gtildedef}
\tilde g_\psi (r,c) :=& g_{K,j}(r,c) \psi(c) N(c)^{-1/2}. 
\end{align}
  Here $a$ is a primary square-free element in $\mathcal O_K$, and we say an element $n \in \mathcal O_K$ is square-free if it is not divisible by any prime power. \newline

The study of $F_a(z,r,\psi)$ then is reduced to developing of analytical properties of Dirichlet series of the form
\begin{align}
\label{twistedseries}
 \su{b \odd }  \frac {\psi(b)  g_{K,j}(\nu^2r,b)}{N(b)^{s}},
\end{align}
  where $\nu, r$ are primary elements in $\mathcal O_K$ with $(\nu, r)=1$ and $\nu$ square-free.  \newline

   It follows from T. Kubota's theory \cites{Kub1,Kub2} of metaplectic
Eisenstein series on the $4$-fold cover of $\operatorname{GL}_2$ over the Gaussian field that the series in \eqref{twistedseries} appear naturally as coefficients of the Fourier expansion of related Eisenstein series. We therefore devote Section \ref{sec: MES} in this paper to develop results sufficient for our purpose on these Eisenstein series. In fact, most of the analytical properties needed in this paper for the series given in \eqref{twistedseries} have already been established by S. J. Patterson in \cite[p. 200, Lemma]{P}, except for the estimation of the residues of the possible simple pole at $s=5/4$ of the series. This is in contrast to the cubic case, where the corresponding residues were given in closed form by Patterson in \cite{Patterson77}. For the quartic case, only partial information on these residues are given by T. Suzuki \cite{Suz1} for the case $\psi$ being the principal character. For our purpose, we need to understand the situation when a general $\psi$ is involved (the twisted case). More precisely, we aim to show that contribution from the $\nu$-aspect to the residue of the series given in \eqref{twistedseries} at $5/4$ is $\ll N(v)^{-1/4+\varepsilon}$, a result we shall achieve in Proposition \ref{resrelation} by generalizing the treatments in \cite{Suz1}. This result is also analogous to \cite[Proposition 8.2, Corollary 8.4]{DDHL}. \newline

With the aid of Proposition \ref{resrelation}, we proceed in Section \ref{section6} to prove the next result for $F_a(z,r,\psi)$.
\begin{prop}
\label{prop: Fbound}
 With the notation as above, suppose $a \in \mathcal O_K$ is square-free with $(a,r)=1$ and $\psi$ is a Dirichlet character on $\mathcal O_K$.  Then for $z$ satisfying $N(a) \leq z^{1/4}N(r)^{-1/8}$ and any $\varepsilon>0$, we have
\begin{align} \label{Bound4F}
\begin{split}
F_a(z,r,\psi) &\ll z^{3/4+\varepsilon} N(a )^{-1/2+\varepsilon} N (r)^{1/8+\varepsilon}.
\end{split}
\end{align}
\end{prop}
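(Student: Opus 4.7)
The plan is to realize $F_a(z,r,\psi)$ as a smoothed contour integral of an associated Dirichlet series, shift the contour past the sole relevant pole at $s=3/4$ to extract a main term, and control the tail integral using the analytic input from Patterson's lemma together with Proposition~\ref{resrelation}.

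To begin, introduce a smooth weight $\eta$ approximating the sharp cutoff at $N(b)=z$ and, using the Mellin transform of $\eta$, write
\[
\sum_{\substack{b \odd \\ (b,r)=1,\, a\mid b}} \tilde g_\psi(r,b)\, \eta\!\left(\frac{N(b)}{z}\right) = \frac{1}{2\pi i}\int_{(\sigma)} D_a(s;r,\psi)\, \widehat{\eta}(s)\, z^s\, ds \qquad (\sigma>1),
\]
where $D_a(s;r,\psi):=\sum_{a\mid b,\,(b,r)=1,\,b\odd}\tilde g_\psi(r,b)\,N(b)^{-s}$; the transition between the smoothed sum and $F_a(z,r,\psi)$ costs only an admissible error. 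Substituting $b=ac$ and decomposing $c$ into its part supported on primes dividing $a$ and its part coprime to $a$, one uses the multiplicativity of $g_{K,j}(r,\cdot)$ together with the explicit values of $g_{K,j}(r,\varpi^k)$ at prime powers to express $D_a(s;r,\psi)$ as a finite linear combination of twisted series of the shape \eqref{twistedseries} with $\nu$ running over divisors of $a$, each weighted by a local Euler-type factor at the primes of $a$ whose combined size at $\Re(s)=3/4$ is $\ll N(a)^{-1/2+\varepsilon}$.

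By Patterson's lemma \cite[p.~200]{P}, every series of the shape \eqref{twistedseries} has meromorphic continuation past $\Re(s)=5/4$ with at most a simple pole at $s=5/4$ and satisfies a polynomial bound in $|\Im(s)|$ on vertical strips. Translating via the normalization of $\tilde g_\psi$, the series $D_a(s;r,\psi)$ has at most a simple pole at $s=3/4$. Shift the line of integration from $\Re(s)=\sigma>3/4$ to $\Re(s)=1/2+\varepsilon$, collecting the residue at $s=3/4$. Combining the decomposition above with the bound $\ll N(\nu)^{-1/4+\varepsilon}$ from Proposition~\ref{resrelation} on each $\nu$-contribution to the residue, and with Patterson's estimate supplying the $N(r)^{1/8+\varepsilon}$ control on the $r$-dependence, one obtains
\[
\operatorname{Res}_{s=3/4} D_a(s;r,\psi) \ll N(a)^{-1/2+\varepsilon}\, N(r)^{1/8+\varepsilon},
\]
so that the residue contributes $\ll z^{3/4+\varepsilon} N(a)^{-1/2+\varepsilon} N(r)^{1/8+\varepsilon}$ to $F_a(z,r,\psi)$, matching the right-hand side of \eqref{Bound4F}. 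The remaining integral along $\Re(s)=1/2+\varepsilon$ is estimated via Patterson's convexity bound on each constituent series and the rapid decay of $\widehat{\eta}$; the hypothesis $N(a)\le z^{1/4} N(r)^{-1/8}$ is exactly what is needed to absorb this tail contribution into the main term.

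The main obstacle is the bookkeeping in the decomposition step: one must expand $D_a(s;r,\psi)$ into shifted and twisted copies of \eqref{twistedseries} while tracking the exact values (and vanishing) of $g_{K,j}(r,\varpi^k)$ at prime powers, so that the saving $N(\nu)^{-1/4+\varepsilon}$ from Proposition~\ref{resrelation} compounds with the local factor at primes of $a$ into the stronger saving $N(a)^{-1/2+\varepsilon}$ demanded by \eqref{Bound4F}, rather than the weaker $N(a)^{-1/4+\varepsilon}$ that a naive application of Proposition~\ref{resrelation} alone would yield.
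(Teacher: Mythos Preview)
Your overall architecture---express the sum as a contour integral of the Dirichlet series $h_a(r,s+\tfrac12,\psi)$, shift past the pole at $s=3/4$, and bound the residue via Proposition~\ref{resrelation} combined with Patterson's lemma---is correct and matches the paper. Your residue bound $N(a)^{-1/2+\varepsilon}N(r)^{1/8+\varepsilon}$ is in fact weaker than what the decomposition actually yields (the prefactor $g_{K,j}(r,a)N(a)^{-5/4}\ll N(a)^{-3/4}$ together with the $N(a)^{-1/4}$ saving from Proposition~\ref{resrelation} gives $N(a)^{-1}$), but that is harmless.

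The genuine gap is in your treatment of the integral on the shifted line and the passage from smooth to sharp cutoff. You assert that ``Patterson's convexity bound \dots\ and the rapid decay of $\widehat\eta$'' suffice, and that the smoothing error is ``admissible''. Neither claim survives inspection. On the line $\Re(s)=1+\varepsilon$ (in the $h_a$-variable) Patterson's pointwise bound gives $|h_a|\ll N(r)^{1/4+\varepsilon}(1+t^2)^{3/4+\varepsilon}$, which \emph{grows} in $t$; so for a weight $\eta$ smoothed on scale $\delta$ one only has $\widehat\eta(s)\ll\min(|s|^{-1},\delta^{1-A}|s|^{-A})$ and the shifted integral contributes $\asymp z^{1/2}N(r)^{1/4}\delta^{-3/2}$. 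Meanwhile the smoothing error is $\ll\delta z/N(a)$ (the summands $\tilde g_\psi$ are not positive, so one must bound the difference termwise). Balancing these two in $\delta$ produces at best $z^{4/5+\varepsilon}N(a)^{-3/5}N(r)^{1/10}$, strictly weaker than \eqref{Bound4F}, and the hypothesis $N(a)\le z^{1/4}N(r)^{-1/8}$ is \emph{not} ``exactly what is needed'' for this balance.

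What the paper does instead is use Perron's formula with a truncation parameter $T$ (no smoothing), and then control the vertical integral on $\Re(s)=1+\varepsilon$ not by the pointwise convexity bound but by a \emph{second-moment} estimate
\[
\int_{-T}^{T}|h_a(r,1+\varepsilon+it,\psi)|^2\,dt \ \ll\ T^3 N(r)^{1/2+\varepsilon},
\]
established (Lemma~\ref{lem:from-HB} and \eqref{IntegralMeanValuebound4h_a}) via the functional equation \eqref{funceqn} and a reflection argument in the style of Heath--Brown. Cauchy--Schwarz and partial summation then give $\int_{-T}^{T}|h_a|/|s|\,dt\ll T\,N(r)^{1/4+\varepsilon}$, a saving of $T^{1/2}$ over what convexity alone would yield. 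With this in hand the three competing terms (Perron error $T^{-1}zN(a)^{-1}$, shifted integral $z^{1/2}TN(r)^{1/4}$, residue $z^{3/4}N(a)^{-1}N(r)^{1/8}$) balance precisely at $T=z^{1/4}N(a)^{-1/2}N(r)^{-1/8}$, and the hypothesis $N(a)\le z^{1/4}N(r)^{-1/8}$ is exactly the condition $T\ge 1$. This mean-value input is the missing ingredient in your proposal.
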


  We further apply Proposition \ref{prop: Fbound} in Section \ref{section-Patterson} to establish the following bound on $H(r,s;\psi)$. 
\begin{prop}
\label{lemma:laundrylist}
  With the notation as above and $H(r,s;\psi)$ defined in \eqref{h}, for any fixed $r$, $H(r,s;\psi)$ is analytical in the region when $3/2-1/8<\Re(s) \leq 3/2$.
Moreover, in this region,  we have for any $\varepsilon>0$,
\begin{align}
\label{hbound}
 & H(r,s;\psi) \ll N(r)^{(3/2-\Re (s))/2+\varepsilon}(1+|s|).
\end{align}
\end{prop}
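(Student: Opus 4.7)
The plan is to bound $H(r,s;\psi)$ by Abel summation from a partial-sum estimate over primes, which I will in turn extract from Proposition~\ref{prop: Fbound} via a Vaughan-type decomposition of $\Lambda_K$. Using \eqref{gtildedef} to rewrite the Gauss sum, one has
\begin{align*}
H(r,s;\psi)=\sum_{\substack{\varpi \text{ primary}\\ (\varpi,r)=1}}\frac{\log N(\varpi)\,\tilde g_\psi(r,\varpi)}{N(\varpi)^{s-1/2}},
\end{align*}
and Abel summation gives the integral representation
\begin{align*}
H(r,s;\psi)=\bigl(s-\tfrac12\bigr)\int_1^{\infty}T(z,r,\psi)\,z^{-s-1/2}\,dz,
\end{align*}
where $T(z,r,\psi)$ denotes the partial sum of $\log N(\varpi)\,\tilde g_\psi(r,\varpi)$ over primary primes $\varpi$ with $(\varpi,r)=1$ and $N(\varpi)\le z$. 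The target estimate on $T$ is
\begin{align*}
T(z,r,\psi)\ll z^{7/8+\varepsilon}N(r)^{1/16+\varepsilon},
\end{align*}
and combining it with the trivial bound $T(z,r,\psi)\ll z^{1+\varepsilon}$, then splitting the integral above at the balancing point $Z_0=N(r)^{1/2}$, yields precisely the asserted estimate $H(r,s;\psi)\ll N(r)^{(3/2-\Re(s))/2+\varepsilon}$ on $11/8<\Re(s)\le 3/2$; analyticity there follows at once from the uniform convergence of the representation.

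To prove the target bound for $T(z,r,\psi)$, I would apply Vaughan's identity to $\Lambda_K$ with both parameters taken to be $u:=z^{1/4}N(r)^{-1/8}$. The Type I contribution then reduces to sums essentially of the form
\begin{align*}
\sum_{\substack{d \text{ primary sqfree}\\(d,r)=1,\ N(d)\le u}}\mu_K(d)\log N(d)\,F_d(z,r,\psi)
\end{align*}
(together with a closely related sum carrying an inner $\Lambda_K$), and my choice of $u$ is exactly the range in which Proposition~\ref{prop: Fbound} applies. Inserting $F_d(z,r,\psi)\ll z^{3/4+\varepsilon}N(d)^{-1/2+\varepsilon}N(r)^{1/8+\varepsilon}$ and summing over primary square-free $d$ with $N(d)\le u$ yields $\ll z^{3/4+\varepsilon}N(r)^{1/8+\varepsilon}u^{1/2+\varepsilon}=z^{7/8+\varepsilon}N(r)^{1/16+\varepsilon}$, matching the target.

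The main obstacle is the Type II bilinear sum, which is supported on pairs $(d,m)$ with both $N(d)>u$ and $N(m)>u$. Applying Cauchy--Schwarz in the shorter variable reduces it to correlation sums of the shape $\sum_d\tilde g_\psi(r,dm_1)\overline{\tilde g_\psi(r,dm_2)}$ for primary $d$ in a dyadic range. To handle these I would exploit the twisted multiplicativity of the quartic Gauss sum (a consequence of quartic reciprocity in $\mathcal O_K$) to decouple the $m_i$-dependence and reduce to averages of $g_{K,j}(r',b)$ over primary $b$, which can then be controlled through the analytic properties of Kubota's metaplectic Eisenstein series developed in Section~\ref{sec: MES} --- most importantly by the residue bound at $s=5/4$ of the twisted series \eqref{twistedseries} supplied by Proposition~\ref{resrelation}. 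Balancing the resulting Type II contribution against the Type I contribution via the above choice of $u$ yields the target estimate on $T(z,r,\psi)$ and completes the proof.
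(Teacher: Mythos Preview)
Your overall architecture---Abel summation to reduce to partial sums $T(z,r,\psi)=H_z(r;\psi)$, then Vaughan's identity with parameter $u=z^{1/4}N(r)^{-1/8}$, and the Type~I sums handled through Proposition~\ref{prop: Fbound}---matches the paper exactly (this is Proposition~\ref{lemg3} together with Section~\ref{section-PTT}). The Type~I bound and the final balancing are also the same.

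The divergence is in the Type~II sums, and here your plan has a gap. You propose opening the square after Cauchy--Schwarz and controlling the resulting correlation sums $\sum_d \tilde g_\psi(r,dm_1)\overline{\tilde g_\psi(r,dm_2)}$ by reducing to single-variable Gauss-sum averages governed by the metaplectic Eisenstein series and Proposition~\ref{resrelation}. But the twisted multiplicativity does \emph{not} decouple the $m_i$-dependence into such averages: from \eqref{rel2} and \eqref{bilaw} one has, for square-free coprime $v,w$,
\[
\tilde g_\psi(r,vw)=(-1)^{C(v,w)}\Big(\frac{w}{v}\Big)_{2}\,\tilde g_\psi(r,v)\,\tilde g_\psi(r,w),
\]
so the bilinear form carries an irreducible \emph{quadratic} residue symbol coupling $v$ and $w$. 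The residue information at $s=5/4$ from Proposition~\ref{resrelation} is not what governs this coupling; it was already fully spent in proving Proposition~\ref{prop: Fbound} for the Type~I part.

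The paper's Type~II argument (Proposition~\ref{type2prop}) instead exploits precisely this quadratic structure: after extracting the factor $(w/v)_2$ one has a genuine bilinear form in the quadratic symbol with bounded coefficients supported on square-frees, and a single application of the quadratic large sieve over $\mathcal O_K$ (Theorem~\ref{quadsieve}, due to Onodera after Heath-Brown) gives
\[
\Sigma_{2''}(Z,r,u),\ \Sigma_3(Z,r,u)\ \ll_\varepsilon\ Z^{\varepsilon}\bigl(Z^{1/2}u+Zu^{-1/2}\bigr),
\]
which is exactly what balances against your Type~I bound at $u=z^{1/4}N(r)^{-1/8}$. This is both simpler and sharper than attempting to feed the off-diagonal back through the Eisenstein-series machinery; in particular, no further input from Section~\ref{sec: MES} is needed for Type~II.
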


  Applying Proposition \ref{lemma:laundrylist} which is reminiscent of \cite[Proposition 8.2]{DDHL}, together with the treatments in \cite{G&Zhao17} using multiple Dirichlet series, then allows us to complete the proofs of Theorems \ref{Theorem for all characters}--\ref{Theorem one-level densityQ} in Section \ref{sec: mainthm}.  We end this section by pointing out that the writing of this paper is largely influenced by the work of S. J. Patterson \cite{P}, T. Suzuki \cite{Suz1}, C. David and A. M. G\"{u}lo\u{g}lu \cites{DG22}, as well as C. David, A. Dunn, A. Hamieh, and H. Lin \cite{DDHL}. 

\section{Preliminaries}
\label{sec 2}

%%----------------------------------------------------------------------------
\subsection{Quartic residue symbols and Gauss sums}
\label{sec2.4}
%%----------------------------------------------------------------------------
   Recall that $K$ stands for the Gaussian field $\mq(i)$ throughout.  It is well-known that $K$ has class number one, $\mathcal{O}_{K}=\mz[i]$ and $U_K=\{ \pm 1, \pm i \}$.  As usual, $\mathcal O^*_K = \mathcal O_K\backslash \{0\}$.  Recall also that we set $\lambda=1+i$  and that it follows from 
\cite[\S 9.7, Lemma 7]{I&R} that every ideal in $\mathcal O_K$ co-prime to $2$ has a unique generator congruent to $1$ modulo $\lambda^3$.  This generator is called primary. It follows from \cite[\S 9.7, Lemma 6]{I&R} that an element $n=a+bi$ in $\mathcal O_K$ with $a, b \in \mz$ is primary if and only if $a \equiv 1 \pmod{4}$, and $b \equiv 0 \pmod{4}$ or $a \equiv 3 \pmod{4}$, and $b \equiv 2 \pmod{4}$. \newline
   
For any prime $\varpi$ co-prime to $2$ in $\mathcal{O}_{K}$, the quartic residue symbol $\leg {\cdot}{\cdot}_4$ is defined so that
$\leg{a}{\varpi}_4 \equiv
a^{(N(\varpi)-1)/4} \pmod{\varpi}$ with $\leg{a}{\varpi}_4 \in \{
\pm 1,  \pm i\}$ for any $a \in \mathcal{O}_{K}$ co-prime to $\varpi$ and that 
$\leg{a}{\varpi}_4 =0$ when $\varpi | a$. The above definition is then extended
to any composite $n \in \mathcal O_K$ co-prime to $2$ multiplicatively. We further define $\leg{a}{u}_4=1$ for any $a \in \mathcal O_K, u \in U_K$. We also write $\leg {\cdot}{\cdot}_2:=\leg {\cdot}{\cdot}^2_4$ for the quadratic symbol defined on $\mathcal O_K$. For $l=2$ and $4$, let $\chi_{l, a}$ be the symbol $\leg {\cdot}{a}_{l}$ for any $a \in \mathcal O_K$ co-prime to $2$ and $\chi^{(a)}_l$ the symbol $\leg {a}{\cdot}_{l}$ for any $a \in \mathcal{O}_K$.  As we reserve the letter $j$ for $4$, we shall also write $\chi_{j, a}, \chi^{(a)}_j$ for $\leg {\cdot}{a}_{4}$ and $\leg {a}{\cdot}_{4}$, respectively.\newline

  The quartic reciprocity law \cite[\S 9.9, Theorem~2]{I&R}  states that for any co-prime primary $\alpha, \gamma \in \mathcal O_K$, 
\begin{equation} \label{bilaw}
\Big( \frac{\alpha}{\gamma} \Big)_4=(-1)^{C(\alpha,\gamma)} \Big( \frac{\gamma}{\alpha} \Big)_4, \quad 
\end{equation}
where
\begin{equation} \label{Cdef}
C(\alpha,\gamma)=\frac{(N(\alpha)-1)}{4} \frac{(N(\gamma)-1)}{4}.
\end{equation}

    Moreover, we deduce from Lemma 8.2.1 and Theorem 8.2.4 in \cite{BEW} that the following supplementary laws hold for primary $n=a+bi$ with $a, b \in \mz$:
\begin{align}
\label{2.05}
  \leg {i}{n}_4=i^{(1-a)/2} \qquad \mbox{and} \qquad  \hspace{0.1in} \leg {1+i}{n}_4=i^{(a-b-1-b^2)/4}.
\end{align}

From \eqref{2.05}, we get that 
\begin{align*}
%%\label{cubicsupplyment}
 \leg {i}{c}_{4}=\leg {1+i}{c}_{4}=1, \quad c \equiv 1 \pmod {16}.
\end{align*}

  It follows that $\chi_{j, c}$  can be regarded as a Hecke character of trivial infinite type for any $c \equiv 1 \pmod {16}$ and this character is primitive modulo $c$ if $c$ is a prime. Note that the symbol $\chi^{(a)}_{j}$  is also a Hecke character of trivial infinite type for any $a \in \mathcal O_K$. In the rest of the paper, we shall treat both $\chi_{j, \varpi}$ and $\chi^{(a)}_{j}$  as Hecke characters.  \newline
  
  Let $e(z) = \exp (2 \pi i z)$ for any $z \in \mc$ and set $\widetilde{e}(z) :=e(z+\overline{z})$.  For any $k, c \in \mathcal O_K$ with $(c, 2)=1$ and $l=2,4$, we define the associated Gauss sums $g_{K,l}(k, c)$ and $g_{l}(k, c)$ by
\begin{align*}
%%\label{g2}
 g_{K,l}(k,c) = \sum_{x \shortmod{c}} \chi_{l,c}(x) \widetilde{e}\leg{kx}{c\sqrt{D_K}}, \quad g_{l}(k,c) = \sum_{x \shortmod{c}} \chi_{l,c}(x) \widetilde{e}\leg{kx}{c}, 
\end{align*}
  where $D_{K}$ is the discriminant of $K$ so that $D_{K}=-4$. Here we remark that we define two types of Gauss sums above as they both appear in the literature. We are in favor of using the notation $g_{K,l}(k,c)$ here since it emerges naturally in the functional equation of the related Hecke $L$-functions. However, we shall also make use of the $g_{l}(k,c)$ in some places of our treatments.  Note that we have
\begin{align}
\label{GKrel}
   g_{K,l}(k,c) =\leg {\sqrt{D_K}}{c}_l g_{l}(k,c).
\end{align}
  It follows from \eqref{2.05} that $\leg {\sqrt{D_K}}{\cdot}_l$ is a ray class group character modulo $16$ so that \eqref{GKrel} implies that $g_{K,l}(k,c)$ and  $g_{l}(k,c)$ only differ by a ray class group character modulo $16$. \newline
  
  Observe that for $(s,c)=1$, 
\begin{align}
\label{rel1}
   g_{K,l}(rs,c)=\overline{\Big( \frac{s}{c} \Big)_l} g_{K,l}(r,c), \qquad g_{l}(rs,c)=\overline{\Big( \frac{s}{c} \Big)_l} g_{l}(r,c). 
\end{align}

   It follows from \eqref{rel1} and \cite[Proposition 4.23]{Lemmermeyer}) that if $(r,c)=1$, 
\begin{align}
\label{gest}
   |g_{K,l}(r,c)|, \ |g_{l}(r,c)| \leq N(c)^{1/2}. 
\end{align}   

   Now, for $r$, $c$, $c^{\prime} \in \mathcal O_K$ with $(cc^{\prime},2)=1$ and $(c,c^{\prime})=1$, the Chinese remainder theorem implies that
\begin{equation}
\label{rel2}
g_{K,4}(r,cc^{\prime})= \Big( \frac{c}{c^{\prime}} \Big)_4 \Big( \frac{c^{\prime}}{c} \Big)_4 g_{K,4}(r,c) g_{K,4}(r,c^{\prime}) \qquad \mbox{and} \qquad g_4(r,cc^{\prime})= \Big( \frac{c}{c^{\prime}} \Big)_4 \Big( \frac{c^{\prime}}{c} \Big)_4 g_4(r,c) g_4(r,c^{\prime}).
\end{equation}
  Note that if $(r,c c^{\prime})=1$ the above relations continue to hold for $(c,c^{\prime}) \neq 1$ since by \eqref{rel1}, \eqref{rel2} and \eqref{rel4} below, both sides of the identities in \eqref{rel2} are $0$ in this case. \newline

  We deduce from \eqref{rel1} and \eqref{rel2} that for $c$, $c'$ primary and $(c,c^{\prime})=1$,
\begin{equation} 
\label{rel3}
g_{K,4}(r,cc^{\prime})=(-1)^{C(c^{\prime},c)} g_{K,4}(r (c^{\prime})^2,c) g_{K,4}(r,c^{\prime}) \qquad \mbox{and} \qquad g_4(r,cc^{\prime})=(-1)^{C(c^{\prime},c)} g_4(r (c^{\prime})^2,c) g_4(r,c^{\prime}).
\end{equation}

In the sequel, we shall write $g_{K, l}(c)$, $g_{l}(c)$ for $g_{K, l}(1, c)$, $g_{l}(1, c)$, respectively. We see from \eqref{GKrel} and \eqref{rel2} that it suffices to understand $g_{K,4}(\varpi^k,\varpi^{\ell})$ for primary primes $\varpi$ and non-negative rational integers $k$ and $\ell$.  To this end, we have from \cite[(2.9)]{Diac},
\begin{equation}
\label{rel4}
g_{K,4}(\varpi^k,\varpi^{\ell})=\begin{cases}
N(\varpi)^k g_{K,4}(\varpi) & \text{if } \ell=k+1, \quad k \equiv 0 \pmod{4},\\
N(\varpi)^k g_{K,4}(\varpi) & \text{if } \ell=k+1, \quad k \equiv 1 \pmod{4}, \\
N(\varpi)^k \big(\frac{-1}{\varpi} \big)_4 \overline{g_{K,4}(\varpi)} & \text{if } \ell=k+1, \quad k \equiv 2 \pmod{4}, \\
-N(\varpi)^k & \text{if } \ell=k+1, \quad k \equiv 3 \pmod{4}, \\
\varphi_{K}(\varpi^{\ell}) & \text{if } k \geq \ell, \quad \ell \equiv 0 \pmod{4}, \\
0 & \text{otherwise}.
\end{cases}
\end{equation}
Here $\varphi_{K}(n)$ denotes the number of elements in the reduced residue class of $\mathcal O_K/(n)$ for any $n \in \mathcal O_K$. 

\subsection{Functional equations for Hecke $L$-functions}
	
	For any primitive Hecke character $\chi$ of trivial infinite type modulo $q$, a well-known result of E. Hecke shows that $L(s, \chi)$ has an
analytic continuation to the whole complex plane and satisfies a
functional equation as given in \cite[Theorem 3.8]{iwakow}).  If $\chi=\chi_{j, \varpi}$ for a primary prime $\varpi \equiv 1 \pmod {16}$, this functional equations can be written as 
\begin{align}
\label{fneqnL}
  L(s, \chi_{j, \varpi})=g_{K,j}(\varpi)|D_K|^{1/2-s}N(\varpi)^{-s}(2\pi)^{2s-1}\frac {\Gamma(1-s)}{\Gamma (s)}L(1-s, \overline \chi_{j, \varpi}).
\end{align}

  We estimate the ratio $\Gamma(1-s)/\Gamma (s)$ by Stirling's formula (see \cite[(5.113)]{iwakow}), which yields for constants $c_0$, $d_0 \in \mr$,
\begin{align}
\label{Stirlingratio}
\begin{split}
  \Gamma(s) \ll e^{-|\Im(s)|} \qquad \mbox{and} \qquad \frac {\Gamma(c_0(1-s)+ d_0)}{\Gamma (c_0s+ d_0)} \ll (1+|s|)^{c_0(1-2\Re (s))}. 
\end{split}
\end{align}

\subsection{Estimations on $L$-functions}

The $L$-function $L(s, \chi)$ associate to any Hecke character $\chi$ of trivial infinite type has an Euler product for $\Re(s)$ large enough given by
\begin{align*}
%%\label{LEuler}
 L(s, \chi)=\prod_{(\varpi)}\Big(1-\frac {\chi(\varpi)}{N(\varpi)^s}\Big)^{-1},
\end{align*}
  where $(n)$ denotes the ideal generated by any $n \in \mathcal O_K$ throughout the paper. Logarithmically differentiating both sides above renders that, for $\Re(s)$ large enough,
\begin{align*}
%%\label{Llogder}
 -\frac {L'(s, \chi)}{L(s, \chi)}=\sum_{(n)}\frac {\Lambda_{K}(n)\chi(n)}{N(n)^s}.
\end{align*}

Now, let $m \in \mathcal O_K$ and any $\varpi \equiv 1 \pmod {16}$, we deduce, in a manner similar to \cite[(2.9), (2.12)-(2.14)]{G&Zhao17}, that under GRH, for any $\varepsilon>0$,
\begin{align}
\label{Lderboundgen}
\begin{split}
  (s-1)\cdot -\frac {L'(s, \chi^{(m)}_j)}{L(s, \chi^{(m)}_j)}  \ll & |s-1|\big((N(m)+2)(1+|s|)\big)^{\varepsilon}, \quad \Re(s) \geq 1/2+\varepsilon,  \\
  (s-1)L(s, \chi^{(m)}_j), \ (s-1)L(s, \chi_{j,\varpi}) \ll & |s-1|\big((N(m)+2)(1+|s|)\big)^{\varepsilon}, \quad \Re(s) \geq 1/2,  \\
  L(s,  \chi^{(m)}_j )^{-1} , \ L( s,  \chi_{j,\varpi} )^{-1} \ll & |sN(m)|^{\varepsilon}, \quad \Re(s) \geq 1/2+\varepsilon . \\
\end{split}
\end{align}

\subsection{The large sieve with quadratic symbols}
\label{quadsievesec}
 
  In \cite[Theorem 1]{DRHB}, D. R. Heath-Brown established a powerful and useful quadratic large sieve result over $\mathcal O_K$. Here we include an extension of such to the Gaussian field by K. Onodera \cite{Onodera}. See also \cite{GB} for results concerning general number fields.
\begin{theorem}
\label{quadsieve}
Let $\{a_n\}$ be an arbitrary complex sequence. Then we have
for $M,N \geq 1$, and any $\varepsilon>0$, 
\begin{align*}
\sum_{\substack{m \in \mathcal O_K \\ N(m) \leq M \\ m \equiv 1 \bmod{2} }} \mu^2_K(m) \Big | \hspace{0.1cm}
\sum_{\substack{n \in \mathcal O_K \\  N(n) \leq N \\ n \equiv 1 \bmod{2}}} a_n \mu^2_K(n) \Big( \frac{n}{m} \Big)_2 \Big |^2 \ll_{\varepsilon} (MN)^\varepsilon (M+N)
\sum_{N(n) \leq N} |a_{n}|^2 \mu^2_K(n).
\end{align*}
\end{theorem}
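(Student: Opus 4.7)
The plan is to adapt to the Gaussian integers the quadratic large sieve argument of Heath-Brown over $\mathbb Z$, as carried out by Onodera. By the duality principle for bilinear forms, the stated inequality is equivalent to its dual with the roles of $m$ and $n$ exchanged, so without loss of generality I would assume $M \le N$.

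I would begin by expanding the square on the left-hand side and switching the order of summation to produce the bilinear form
\[
 \sum_{n_1,n_2} a_{n_1}\overline{a_{n_2}}\mu_K^2(n_1)\mu_K^2(n_2)\, T(n_1 n_2),
 \qquad T(k)=\sum_{\substack{N(m)\le M\\ m\equiv 1 \bmod 2}}\mu_K^2(m)\leg{k}{m}_2,
\]
where $n_1,n_2$ range over odd squarefree elements of $\mathcal O_K$ of norm at most $N$. The diagonal terms with $n_1 n_2$ a square in $\mathcal O_K$ contribute $O(M^{1+\varepsilon}\sum|a_n|^2\mu_K^2(n))$, which is within the target bound. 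The heart of the proof is to estimate $T(k)$ nontrivially when $k$ is not a square.

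For this, I would write $k=\ell q^2$ with $\ell$ primary and squarefree, multiply $m$ by an appropriate unit to make it primary (absorbing a twist by the supplementary characters of \eqref{2.05}, which have conductor dividing $\lambda^3$), and invoke the quadratic reciprocity law \eqref{bilaw} (in its quadratic specialization) to swap the entries of $\leg{\ell}{m}_2$. After the flip one can apply Poisson summation over $m\in\mathcal O_K$ restricted to the box $N(m)\le M$, which converts the character sum into a dual sum involving Gauss sums modulo $\ell$, of effective length roughly $N(\ell)/M$. Inserting the trivial bound \eqref{gest} for these Gauss sums and executing the short sum carefully yields a saving over the trivial bound for $T(k)$ whenever $N(\ell)$ is not too small.

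The main obstacle will be the bilinear bookkeeping once the Poisson step is fed back into the sum over $n_1,n_2$: the transformed expression is again structurally a quadratic large sieve with interchanged parameters, and Heath-Brown's recursive argument is needed to close the loop by iterating until the lengths are small enough for a trivial bound to suffice. The passage to $\mathcal O_K$ is eased by the fact that $K$ has class number one and a finite unit group, but the supplementary twists modulo $\lambda^3$ introduced at each reciprocity step must be carried through uniformly, which is the principal technical complication compared to the rational case. The squarefree weight $\mu_K^2$ and the oddness condition $m,n\equiv 1\pmod 2$ are essential throughout: without them, reciprocity and the multiplicativity of Gauss sums fail, and one cannot reduce to a squarefree modulus where Poisson summation produces genuine cancellation.
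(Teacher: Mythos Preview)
The paper does not prove this theorem: it is quoted from the literature (Onodera's extension of Heath-Brown's quadratic large sieve to $\mathcal O_K$, with a reference also to Goldmakher--Louvel for general number fields) and used as a black box. Your sketch follows the Heath-Brown recursion---open the square, flip via reciprocity, Poisson-summate, and iterate---which is indeed the architecture of the proofs in the cited references, so as a proof outline it is on the right track; but since the paper supplies no argument of its own here, there is nothing to compare against beyond noting that you have correctly identified the method behind the cited result.
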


\subsection{Some results on multivariable complex functions} In order to apply the method of multiple Dirichlet series, we need some results from multivariable complex analysis. We begin by introducing the notation of a tube domain.
\begin{defin}
		An open set $T\subset\mc^n$ is a tube if there is an open set $U\subset\mr^n$ such that $T=\{z\in\mc^n:\ \Re(z)\in U\}.$
\end{defin}
	
   We define $T(U)=U+i\mr^n\subset \mc^n$ for any set $U\subset\mr^n$.  We note the following Bochner's Tube Theorem \cite{Boc}.
\begin{theorem}
\label{Bochner}
		Let $U\subset\mr^n$ be a connected open set and $f(z)$ a function holomorphic on $T(U)$. Then $f(z)$ has a holomorphic continuation to the convex hull of $T(U)$.
\end{theorem}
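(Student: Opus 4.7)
The plan is to prove Bochner's Tube Theorem, a classical statement from several complex variables which, unlike the rest of the arguments in this paper, requires genuinely multivariable complex-analytic techniques rather than number-theoretic estimates. The strategy is to reduce the general assertion to a two-point statement, and then handle that by a slice argument combined with a Cauchy integral.

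For the first reduction, I would show that it suffices to establish the following: for any $x_0, x_1 \in U$, the function $f$ extends holomorphically to an open set containing $T([x_0,x_1])$, where $[x_0,x_1]$ is the closed real segment joining $x_0$ and $x_1$. Once this two-point version is available, the full theorem follows from a standard gluing argument: any point of the convex hull lies on a segment between two points of $U$ (in fact, by Carath\'eodory's theorem, on the convex hull of at most $n+1$ points of $U$), so the convex hull is exhausted by finite unions of such segments. The identity theorem on the connected tube $T(U)$ guarantees uniqueness of holomorphic continuation, so the local extensions obtained for each pair are compatible on overlaps and glue to a single holomorphic function on $T(\operatorname{conv}(U))$.

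For the two-point statement itself, after an affine change of coordinates I may assume $x_0 = -x_1 = v$, and the task becomes extending $f$ holomorphically across the tube $\{0\}+i\mr^n$ over the midpoint. I would fix a reference imaginary point $p \in i\mr^n$ and examine the restriction of $f$ to the complex line $L_p = \{p+\zeta v : \zeta \in \mc\}$. The tube hypothesis implies that $f|_{L_p}$ is holomorphic on two vertical strips around $\Re(\zeta)=\pm 1$; moreover, because the imaginary directions of the original tube are all of $\mr^n$, varying $p$ across $i\mr^n$ yields a genuinely $n$-complex-dimensional neighborhood of each slice strip. A Cauchy integral over a rectangular contour whose vertical sides lie in these two strips then produces a candidate extension across $\{|\Re(\zeta)|<1\}$, and Morera's theorem applied in the auxiliary $n-1$ complex directions promotes this pointwise construction to a function holomorphic jointly in all variables.

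The main obstacle is precisely this last promotion to joint holomorphy, which is the content of the Hartogs-type step. Verifying that the Cauchy integral depends holomorphically on the auxiliary parameters requires exploiting the full symmetry of the tube (translation invariance in the imaginary directions) to interchange integration with the Cauchy--Riemann operators in the other variables. In Bochner's original paper this is carried out through a direct estimate and a Hartogs figure; a cleaner modern alternative is to invoke the edge-of-the-wedge theorem, which is designed exactly for extending functions holomorphic on a union of opposite wedges to a full neighborhood of their common edge and immediately yields the desired conclusion once the two-point reduction is in place.
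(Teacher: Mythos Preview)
The paper does not prove this statement. Theorem \ref{Bochner} is quoted as Bochner's Tube Theorem with a citation to Bochner's original paper \cite{Boc}; it is invoked as a black box in the multiple Dirichlet series arguments, and no argument for it is supplied anywhere in the text. So there is no ``paper's own proof'' to compare against.

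As for your sketch itself: the outline (reduce to extending across the tube over a segment between two base points, then handle that by a one-variable Cauchy integral on slices and promote to joint holomorphy) is indeed the classical route. Two points deserve tightening. First, the reduction step is not simply ``any point of the convex hull lies on a segment between two points of $U$''; that is false in general for $n\ge 2$. What one actually does is iterate the midpoint (or segment) extension: once $f$ is extended to the tube over $[x_0,x_1]$, the enlarged base is again open and connected, and repeating the procedure exhausts the convex hull. Carath\'eodory's theorem is not the mechanism here. Second, invoking the edge-of-the-wedge theorem at the end risks circularity, since standard proofs of edge-of-the-wedge go through a tube-theorem or Bochner--Martinelli type argument of exactly the kind you are trying to establish. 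The honest route is the one you already identified as the ``main obstacle'': verify directly that the rectangular Cauchy integral defining the extension on each complex line depends holomorphically on the transverse parameters, by differentiating under the integral sign using the translation invariance of the tube in the imaginary directions.
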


 Let the convex hull of any open set $T\subset\mc^n$ be denoted by $\widehat T$.  The following result \cite[Proposition C.5]{Cech1} bounds the modulus of holomorphic continuations of multivariable complex functions.
\begin{prop}
\label{Extending inequalities}
		Assume that $T\subset \mc^n$ is a tube domain, $g,h:T\rightarrow \mc$ are holomorphic functions, and let $\tilde g,\tilde h$ be their holomorphic continuations to $\widehat T$. If  $|g(z)|\leq |h(z)|$ for all $z\in T$ and $h(z)$ is nonzero in $T$, then also $|\tilde g(z)|\leq |\tilde h(z)|$ for all $z\in \widehat T$.
\end{prop}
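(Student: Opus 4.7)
The plan is to reduce the proposition to a uniform bound on the ratio $f = g/h$, extend $f$ holomorphically by Bochner's Tube Theorem (Theorem \ref{Bochner}), and then transfer the bound $|f|\leq 1$ to the convex hull by a Phragm\'en--Lindel\"of / Hadamard three-lines argument.

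Since $h$ is nonvanishing on $T$, the quotient $f(z) := g(z)/h(z)$ is holomorphic on $T$, and the hypothesis gives $|f(z)|\leq 1$ on $T$. Applying Theorem \ref{Bochner}, $f$ has a holomorphic continuation $\tilde f$ to $\widehat T$. The two holomorphic functions $\tilde g$ and $\tilde f \cdot \tilde h$ agree with $g = fh$ on the open subset $T$, so by the identity theorem they coincide on all of the connected domain $\widehat T$. Thus the proposition will follow once I establish that $|\tilde f(z)|\leq 1$ for every $z \in \widehat T$.

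For this remaining step, write $T = T(U)$ with $U \subset \mr^n$ open, so that $\widehat T = T(\widehat U)$ with $\widehat U$ the convex hull of $U$. Given a target $z_0 = x_0 + iy_0 \in \widehat T$, Carath\'eodory's theorem lets me realize $x_0$ as a convex combination of at most $n+1$ points of $U$. I would then build an affine holomorphic map from a product of vertical strips into $\widehat T$ whose ``skeleton'' lies in $T$ and whose interior contains $z_0$; pulling $\tilde f$ back and iterating Hadamard's three-lines theorem, once per real-part coordinate, propagates the bound $|f|\leq 1$ from $T$ to $z_0$. Equivalently, $\log|\tilde f|$ is plurisubharmonic on $\widehat T$ and bounded above by $0$ on the open subset $T$, so a maximum-principle argument along real line segments through $z_0$ yields the same conclusion. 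Multiplying through by $|\tilde h(z)|$ then completes the argument.

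The main obstacle is precisely this last step: Bochner's theorem alone delivers analytic continuation but not preservation of sup-norm bounds, so the convexity input must be supplied separately. The iterated three-lines argument, which exploits that a tube domain is invariant under translations of the imaginary part and that the extension direction only enlarges the base $U$ to its convex hull $\widehat U$, is what allows the pointwise bound to survive the continuation.
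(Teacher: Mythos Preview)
The paper does not supply its own proof of this proposition; it is quoted from \cite[Proposition C.5]{Cech1} without argument. Your approach---pass to the quotient $f=g/h$, continue $f$ to $\widehat T$ by Theorem~\ref{Bochner}, identify $\tilde g=\tilde f\,\tilde h$ by the identity principle, and then propagate the bound $|f|\le 1$ across the convex hull via an iterated three-lines/Phragm\'en--Lindel\"of argument---is the standard route and is essentially how the cited result is established.

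The one point in your sketch that deserves tightening is the boundedness hypothesis in the three-lines step. Hadamard's theorem requires the one-variable restriction to be bounded on the closed strip, which is precisely part of what you are trying to prove for $\tilde f$; as written there is a mild circularity. You can close this loop in either of the usual ways: invoke the integral-representation form of Bochner's theorem (which directly shows the extension of a bounded function is bounded with the same sup-norm), or insert a Phragm\'en--Lindel\"of regularizing factor such as $e^{\varepsilon z^2}$ along the complex segment, apply the maximum principle, and let $\varepsilon\to 0$. With that adjustment your argument is complete.
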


\section{Metaplectic Eisenstein series}
\label{sec: MES}

  In our proof of Proposition \ref{lemg3}, we need to understand the analytical properties of certain Dirichlet series involving with quartic Gauss sums, twisted by Hecke characters. Such properties are essentially given in \cite[Lemma, p. 200]{P} by S. J. Patterson. However, we need a more precise estimation on the residues at the poles of these series. As it is shown by T. Kubota \cites{Kub1,Kub2} that Dirichlet series formed from quartic Gauss sums are intimately related to metaplectic Eisenstein series on the $4$-fold cover of $\operatorname{GL}_2$ over the Gaussian field and their theta functions, we review and develop here enough results for our purpose concerning these Eisenstein series. Our treatments follows largely those of T. Sukuki on the case without twists in \cite{Suz1}.

\subsection{The Kubota symbol} 
\label{multsec}

  We denote $\mathbb{H}^{3}$ the hyperbolic $3$-space $\mathbb{C} \times \mathbb{R}^{+}$ and embed $\mathbb{C}$ and $\mathbb{H}^3$ in the Hamilton quaternions  by identifying
$i=\sqrt{-1}$ with $\hat{i}$ and
$w=(z(w),v(w))=(z,v)=(x+iy,v) \in \mathbb{H}^3$ with $x+y \hat{i}+v \hat{j}$, where
$1,\hat{i},\hat{j},\hat{k}$ denote the unit quaternions.
We represent a point $w=(z,v)$ by the matrix $w=\pmatrix z {-v} {v} {\overline{z}}$,
and $u \in \mathbb{C}$ by the matrix $\widetilde{u}=\pmatrix u 0 0 {\overline{u}}$.
Then the group $\operatorname{SL}_2(\mathbb{C})$ acts discontinuously on $\mathbb{H}^3$ by
\begin{equation*}
\gamma w=\frac{\widetilde{a}w+\widetilde{b}}{\widetilde{c}w+\widetilde{d}}, \quad \gamma=\begin{pMatrix}
a b
c d
\end{pMatrix} \in \operatorname{SL}_2(\mathbb{C}), \quad \text{and} \quad
w \in \mathbb{H}^3.
\end{equation*}

 Let $N \in \mathcal O_K$ and $I_2$ the $2$ by $2$ identity matrix.  We define the congruence subgroup
\begin{equation*}
\Gamma_1(N):= \{ \gamma \in \operatorname{SL}_2(\mathcal O_K): \gamma \equiv I_2 \ppmod{N} \}.
\end{equation*}
 Then $\Gamma_1(N)$ also acts discontinuously on $\mathbb{H}^3$ whose fundamental domain has a finite
volume with respect to the $\operatorname{SL}_2(\mathbb{C})$-invariant
Haar measure $v^{-3} \dif z \dif v$ on $\mathbb{H}^3$, where we set $\dif z=\dif x \dif y$ for $z=x+iy$ with $x$, $y \in \mr$. The Kubota symbol
$\chi: \Gamma_1(N) \rightarrow \{\pm 1, \pm i\}$ is defined by
\begin{equation*}
\chi(\gamma)=\begin{cases}
\big( \frac{c}{a} \big)_4  & \text{if } c \neq 0, \\
1 & \text{if } c=0.
\end{cases}
\end{equation*}

From now on, we assume that $\lambda^4|N$ to note that it is shown via the quartic reciprocity law in \cite[p. 73]{Suz1} that $\chi$ is a character on $\Gamma_1(N)$ in this case. \newline

Let $\kappa \in P(\Gamma_1(N)) := \mathbb{Q}(i) \cup \{\infty = 1/0 \}$ be a cusp of $\Gamma_1(N)$ and denote $\Gamma_{\kappa}$ for the stabilizer of $\kappa$, so that
\begin{equation*}
\Gamma_{\kappa}=\{ \gamma \in \Gamma_1(N) : \gamma \kappa=\kappa \}.
\end{equation*}

 We write each $\kappa$ as $\alpha/\gamma$ with $(\alpha, \gamma)=1$. If $(\alpha, \lambda)=1$, then we may assume that $\alpha$ is primary. Similarly, if $(\alpha, \lambda) \neq 1$, then we may assume that
$\gamma$ is primary. It then follows (see \cite[Lemma 2]{Suz1}) that we have
\begin{lemma}
\label{lemcusp} Two cusps $\kappa=\alpha/\gamma$, $\kappa'=\alpha'/\gamma'$ are equivalent to each other under $\Gamma_1(N)$ if and
only $\alpha \equiv \alpha' \pmod N$ and $\gamma \equiv \gamma' \pmod N$.
\end{lemma}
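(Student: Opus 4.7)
The plan is to prove both implications by direct computation, leveraging the structure of $\Gamma_1(N)$ together with the normalization imposed on the cusp representatives and the assumption $\lambda^4 \mid N$.

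For the forward direction, suppose $M = \pMatrix{a}{b}{c}{d} \in \Gamma_1(N)$ satisfies $M\kappa = \kappa'$. Unfolding the action yields $\alpha' = u(a\alpha + b\gamma)$ and $\gamma' = u(c\alpha + d\gamma)$ for some unit $u \in U_K$, since $\det M = 1$ combined with $(\alpha, \gamma) = 1$ forces $(a\alpha + b\gamma,\, c\alpha + d\gamma) = 1$. The congruences $a\equiv d\equiv 1\pmod N$ and $b\equiv c\equiv 0\pmod N$ then give $\alpha' \equiv u\alpha \pmod N$ and $\gamma' \equiv u\gamma \pmod N$. To pin down $u = 1$ one invokes the normalization: when $(\alpha, \lambda) = 1$, both $\alpha$ and $a\alpha + b\gamma$ are primary (using $\lambda^4 \mid N$ to absorb the off-diagonal contributions), and a short check on $U_K = \{\pm 1, \pm i\}$ shows that the only unit congruent to $1$ modulo $\lambda^3$ is $1$ itself. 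The case $(\alpha, \lambda) \neq 1$ is handled symmetrically with $\gamma$ primary instead.

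For the converse, write $\alpha' = \alpha + N\beta$ and $\gamma' = \gamma + N\delta$, and seek $M$ of the shape $\pMatrix{1+Na_1}{Nb_1}{Nc_1}{1+Nd_1}$ with $M\kappa = \kappa'$. Matching numerators and denominators reduces $M\kappa = \kappa'$ to the two Bezout equations $a_1\alpha + b_1\gamma = \beta$ and $c_1\alpha + d_1\gamma = \delta$, both solvable because $(\alpha, \gamma) = 1$, while $\det M = 1$ simplifies to the single scalar relation $(a_1 + d_1) + N(a_1 d_1 - b_1 c_1) = 0$. The Bezout ambiguity $(a_1, b_1) \mapsto (a_1 + t\gamma,\, b_1 - t\alpha)$ and $(c_1, d_1) \mapsto (c_1 + s\gamma,\, d_1 - s\alpha)$ leaves the two Bezout equations intact, and a brief calculation shows it shifts the determinantal deficit by $t\gamma' - s\alpha'$; a final Bezout step, now using $(\alpha', \gamma') = 1$, supplies $s, t$ that kill the deficit, completing the construction.

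The main obstacle is this last determinant adjustment: one must verify that the two-parameter ambiguity preserving the Bezout equations is rich enough to simultaneously enforce $\det M = 1$. The clean point making it work is that the change in the determinantal deficit under this ambiguity surfaces the \emph{new} coprime pair $(\alpha', \gamma')$ rather than the old one, which is precisely what the congruence hypothesis on $\kappa'$ lets us exploit. Minor case distinctions arise when $\gamma = 0$ (i.e.\ $\kappa = \infty$), but under the standing normalization $\alpha = 1$ the same argument carries over verbatim, and the sister case $(\alpha, \lambda) \neq 1$ is handled by swapping the roles of $\alpha$ and $\gamma$ throughout.
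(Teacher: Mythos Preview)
Your argument is correct in both directions. The forward direction correctly identifies the unit $u$ via the primary normalization (and indeed the only unit in $U_K$ congruent to $1$ modulo $\lambda^3$ is $1$), while the converse is a clean Bezout construction: your key observation that the two-parameter ambiguity shifts the determinantal deficit by $t\gamma'-s\alpha'$ rather than $t\gamma-s\alpha$ is exactly what makes the final adjustment possible, and your computation of this shift is right.

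As for comparison with the paper: there is nothing to compare. The paper does not prove this lemma but simply cites \cite[Lemma 2]{Suz1}. Your write-up therefore supplies a self-contained argument where the paper defers to the literature. The proof you give is the standard one and is essentially what one finds in Suzuki's paper, so nothing is lost or gained methodologically; you have just made the paper more self-contained at this point.
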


For each cusp $\kappa=\alpha/\gamma$, we choose a scaling matrix $\sigma_{\kappa}=
\pmatrix \alpha \beta \gamma \delta \in \operatorname{SL}_2(\mathcal O_K)$ so that $\sigma_{\kappa}(\infty)=\alpha/\gamma$. It follows that $\Gamma_{\kappa}=\sigma_{\kappa} \Gamma_{\infty} \sigma_{\kappa}^{-1}$,
where
\begin{equation*}
\Gamma_{\infty}= \Big \{ \begin{pMatrix}
1 \mu
0 1
\end{pMatrix}: \mu \in N\mathcal O_K \Big \}.
\end{equation*}

  A cusp $\kappa \in P(\Gamma_1(N))$ is called essential if the restriction of $\chi$ to $\Gamma_{\kappa}$ is trivial. The above then implies that
a cusp $\kappa$  of $\Gamma(N)$ is essential if and only if for every $\mu \in N\mathcal O_K$,
\begin{equation*}
\chi \Big (\sigma_{\kappa} \begin{pMatrix}
1 \mu
0 1
\end{pMatrix}\sigma_{\kappa}^{-1} \Big )=1.
\end{equation*}

  It is shown in \cite[p. 73]{Suz1} that for $\kappa=\alpha/\gamma$,
\begin{equation*}
\chi \Big (\sigma_{\kappa} \begin{pMatrix}
1 \mu
0 1
\end{pMatrix}\sigma_{\kappa}^{-1} \Big )=\leg {\gamma}{1-\gamma \alpha \mu}_j\leg {\alpha}{1-\gamma \alpha \mu}^{-1}_j.
\end{equation*}

  We deduce from the quartic reciprocity law and the supplementary laws given in \eqref{bilaw} and \eqref{2.05} that the right-hand side expression above equals $1$ if $16|N$ so that the set of all essential cusps of $\Gamma (N)$ does not depend on $N$ in this case.

\subsection{Eisenstein series}
\label{metaeissec}

  Let $P^{\prime}(\Gamma_1(N))= \{ \kappa_1, \kappa_2, \kappa_3, \cdots \}$ be the complete set of representatives of inequivalent essential
cusps of $\Gamma_1(N)$. We fix $\kappa_1=\infty=1/0$ and write $\kappa_i=\alpha_i/\gamma_i$ such that we set $\gamma_i=1$ when $\kappa_i=0$. We also write for simplicity $\sigma_i=\sigma_{\kappa_i} $ and $\Gamma_i=\Gamma_{\kappa_i}$.  For each cusp $\kappa_i$, the Eisenstein series on $\Gamma_1(N)$ attached to the
essential cusp $\kappa_i \in P^{\prime}(\Gamma_1(N))$ is given by
\begin{equation}
\label{Eidef}
E_{i}(w,s,\Gamma_1(N)):=\sum_{\gamma \in \Gamma_{i} \backslash \Gamma_1(N) } \overline{\chi(\gamma)}
v(\sigma_{i}^{-1} \gamma w)^s, \quad w=(z(w), v(w)) \in \mathbb{H}^3, \quad \sigma := \Re(s) >2.
\end{equation}

 For $u,s \in \mathbb{C}$, we define 
\begin{align*}
%%\label{Kintdef}
\begin{split}
K(u,s):=& \int\limits_{\mathbb{C}} (|z|^2+1)^{-s} \widetilde{e}(-uz) \dif z = (2 \pi)^s |u|^{s-1} \Gamma(s)^{-1} K_{s-1}(4 \pi |u|),
\end{split}
\end{align*}
 where $K_{\xi}(\cdot)$ is the standard $K$-Bessel function of order $\xi \in \mathbb{C}$. Here we recall that $dz=dxdy$ for $z=x+iy$ with $x, y \in \mr$. We write $V(N)$ for the volume of $\mc/N\mathcal O_K$ with respect to $\dif z$. \newline

For two cusps $\kappa_i$ and $\kappa_l$, we write
\begin{align*}
M_{il}(N)=\Big \{ (c,d) \in  \mathcal O^*_K \times \mathcal O_K: \sigma_i \begin{pMatrix}
* *
c d
\end{pMatrix}\sigma_{l}^{-1} \in \Gamma_1(N)\Big \}.
\end{align*}
 We then define for $(c, d) \in M_{il}(N)$,
\begin{align*}
\overline \chi_{il}(c,d)=\overline \chi \Big (\sigma_i \begin{pMatrix}
* *
c d
\end{pMatrix}\sigma_{l}^{-1} \Big ).
\end{align*}

  We note that (see \cite[(3.6)]{Suz1}) the Eisenstein series $E_i$ defined in \eqref{Eidef} has the Fourier expansion at any essential cusp $\kappa_l \in P^{\prime}(\Gamma_1(N))$ given by
\begin{align}
\label{eisfourier}
E_{i}(\sigma_l w,s,\Gamma_1(N))=\delta_{il} v(w)^s+& \phi_{il}(s,0, \Gamma_1(N)) v(w)^{2-s} \nonumber \\
&+\sum_{\substack{\nu \in \mathcal O_K \\ \nu \neq 0 }} \phi_{il}(s,\nu,\Gamma_1(N)) v(w)^{2-s} K \Big( \frac{\nu v(w)}{N},s \Big) \widetilde{e} \Big ( \frac{\nu z(w)}{N} \Big),
\end{align}
where
\begin{align}
\begin{split}
\label{phidef}
 \delta_{il}= \begin{cases}
 1 \quad i=l, \\
 0 \quad i \neq l,
 \end{cases}, \qquad &
\phi_{il}(s,0,\Gamma_1(N)):= \pi V(N)^{-1}(s-1)^{-1} \sum_{\substack{(c,d) \in M_{il}(N)}}\overline \chi_{il}(c,d)N(c)^{-s}, \\
\phi_{il}(s, \nu ,\Gamma_1(N))& := V(N)^{-1} \sum_{\substack{(c,d) \in M_{il}(N)}}\overline \chi_{il}(c,d)\widetilde{e} \Big( \frac {d\nu}{cN} \Big) N(c)^{-s}, \quad \nu \neq 0.
\end{split}
\end{align}

  Now, we set $N=16m$ with $m$ being a primary square-free element of $\mathcal O_K$ and let $\kappa_i=\alpha/\gamma$ with $\alpha \equiv 0 \pmod{16m}$. Applying \eqref{eisfourier} to $\kappa_1=\infty$ with $\sigma_1$ being the identity matrix, we see that
\begin{align}
\label{EiFourier}
\begin{split}
  V(16m)\leg {\alpha}{\gamma}_jE_i(w, s, \Gamma_1(16m)) :=& \widetilde\phi_{i1}(s,0, m) v(w)^{2-s}+\sum_{\substack{\nu \in \mathcal O_K \\ \nu \neq 0 }} \widetilde\phi_{i1}(s,\nu,m) v(w)^{2-s} K \Big( \frac{\nu v(w)}{N},s \Big) \widetilde{e} \Big ( \frac{\nu z(w)}{N} \Big).
\end{split}
\end{align}

 As $\alpha \equiv 0 \pmod {16m}$, we argue in a way similar to the derivation of the expression for $\psi(s, \mu, \kappa)$ in \cite[p. 78]{Suz1} to get that for $\nu \neq 0$, 
\begin{align*}
%%\label{phiexporg}
\widetilde\phi_{i1}(s, \nu, m)=\sum_{c \equiv -\gamma \bmod{16m}}\sum_{\substack{d \bmod {16mc} \\ d \equiv 0 \bmod{16m}}}\leg {d}{c}_j\widetilde{e} \Big( \frac {d\nu}{16mc} \Big) N(c)^{-s}.
\end{align*}
For $\nu =0$, an additional factor of $\pi(s-1)^{-1}$ is required in the right-hand side of the above. \newline
  
 Upon writing $d=\delta 16m$ with $\delta$ running through a complete set of reduced residues modulo $c$, we infer that for $\nu \neq 0$,
\begin{align}
\label{phiexp}
\widetilde\phi_{i1}(s, \nu, m)=\sum_{c \equiv -\gamma \bmod{16m}}\leg {m}{c}_jg_j(\nu, c)N(c)^{-s}.
\end{align}
 A similar expression holds for $\widetilde\phi_{i1}(s, 0, m)$.  The above implies that $\phi_i$, and hence $V(16m)\leg {\alpha}{\gamma}_jE_i(w, s, \Gamma_1(16m))$, depends on $\gamma \pmod{16m} $ only and not on $\alpha$.  For any $\alpha \equiv 0 \pmod {16m}$, we shall henceforth use $\phi(s, \nu, \gamma \pmod{16m})$ to denote the quantity $\widetilde\phi_{i1}(s,\nu,m)$ defined in \eqref{EiFourier}.  Thus define for $\kappa_i=\alpha/\gamma$,
\begin{align}
\label{Egammadef}
 E(w, s, \gamma \shortmod{16m}) :=V(16m)\leg {\alpha}{\gamma}_jE_i(w, s, \Gamma_1(16m)).
\end{align}

Now, for any ray class character $\psi$  modulo $16$, define
\begin{align}
\label{E12def}
\begin{split}
 E(w, s, \psi, m) =& \sum_{\substack{ \gamma \bmod {16m}}}\psi(\gamma)E(w, s, \gamma \shortmod{16m}),  \quad \mbox{and} \\
 E_k(w, s, \psi, m) =& \sum_{\substack{ \gamma \bmod {16m} \\ \gamma \equiv k \bmod {16}}}\psi(\gamma)E(w, s, \gamma \shortmod{16m}), \quad   \text{$k$ primary}.
\end{split}
\end{align}
Further define, for any $\nu \in \mathcal O_K$,
\begin{align*}
%%\label{phiidef}
 \phi(s, \nu, \psi, m;k) =& \sum_{\substack{ \gamma \bmod {16m} \\ \gamma \equiv k \bmod {16}}}\psi(\gamma)V(16m)\leg {\alpha}{\gamma}_j\phi(s, \nu, \gamma \shortmod{16m}).
\end{align*}

  We deduce from \eqref{phiexp} that for any $\nu \neq 0$, 
\begin{align}
\label{E12exp}
 \phi(s, \nu, \psi, m;k)  =& \sum_{\substack{c \equiv -k \bmod{16} \\ (c, m)=1}}\psi(c)\leg {m}{c}_jg_j(\nu, c)N(c)^{-s}.
\end{align}
  We point out here that it follows from \eqref{EiFourier} and \eqref{Egammadef} that the functions $\phi(s, \nu, \psi, m;k) $ are the coefficients of the Fourier expansion of $E(w, s, \psi, m)$ at the infinity.

\subsection{Singular values of Eisenstein series}
\label{singularvalues}

  Let $E(w, s, \gamma \pmod{16m})$ be given as in \eqref{Egammadef}. We define its singular value $\phi(s, \gamma \pmod {16m}, \kappa_l)$ at $\kappa_l=\alpha'/\gamma' \in P'(\Gamma_1(16m))$ to be $V(16m)\leg {\alpha}{\gamma}_j\phi_{il}(s,0,\Gamma_1(16m))$, where $\phi_{il}$ is defined in \eqref{phidef} and we set $\kappa_i=\alpha/\gamma$ with $\alpha \equiv 0 \pmod {16m}$. Then we deduce from \eqref{phidef} that
\begin{align}
\label{psidef}
\phi(s, \gamma \shortmod {16m}, \kappa_l)= \pi(s-1)^{-1}\leg{\alpha}{\gamma}_j \sum_{\substack{(c,d) \in M_{il}(N)}}\overline \chi_{il}(c,d)N(c)^{-s}.
\end{align}

  For any ray class character $\psi$ modulo $16$, let $E_k(w, s, \psi, m)$ be given as in \eqref{E12def} for any primary $k$. We define the singular value of $E_k(w, s, \psi, m)$ at $\kappa_l$ to be
\begin{align*}
%%\label{psikdef}
\phi(s, \psi, m, \kappa_l;k)= \sum_{\substack{ \gamma \bmod {16m} \\ \gamma \equiv k \bmod {16}}}\psi(\gamma)\phi(s, \gamma \shortmod{16m}, \kappa_l).
\end{align*}
  We further define
\begin{align}
\label{psiexpression}
\phi(s, \psi, m, \kappa_l)= \sum_{\substack{ \gamma \bmod {16m} }}\psi(\gamma)\phi(s, \gamma \shortmod {16m}, \kappa_l).
\end{align}
  If we write
\begin{align*}
\sigma_i= \begin{pMatrix}
\alpha \beta
\gamma \delta
\end{pMatrix}, \quad
\sigma_l= \begin{pMatrix}
{\alpha'} {\beta'}
{\gamma'} {\delta'}
\end{pMatrix},
\end{align*}
 then similar to the expression given in the last display on \cite[p. 106]{Suz1}, we have
\begin{align*}
%%\label{Mijexp}
M_{il}(16m)=\Big \{ (c,d) \in  \mathcal O^*_K \times \mathcal O_K: c \equiv -\gamma \alpha' \pmod {16m}, \quad d \equiv -\gamma \beta' \pmod {16m} \Big \}.
\end{align*}

  When $\lambda \nmid \gamma'$ (resp. $\lambda \nmid \alpha'$), we may take the value of $\delta'$ (resp. $\beta'$) such that it is divisible by a high power of $\lambda$. Our next lemma evaluates $\overline \chi_{il}(c,d)$.
\begin{lemma}
\label{lemchieval} With the notation as above, we have
\begin{align*}
%%\label{chieval}
 \overline \chi_{il}(c,d) =
\begin{cases}
  \leg {\alpha}{\gamma}^{-1}_j\leg {-\alpha'}{\gamma'}_j\leg {c}{d}_j(-1)^{C(\gamma', \gamma)}, \quad \lambda \nmid \gamma', \\
  \leg {\alpha}{\gamma}^{-1}_j\leg {\gamma'}{\alpha'}_j\leg {d}{c}_j(-1)^{C(\alpha', \gamma)},  \quad \lambda \nmid \alpha',
\end{cases}
\end{align*}
  where $C(\cdot,\cdot)$ is given in \eqref{Cdef}. 
\end{lemma}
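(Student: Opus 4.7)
The plan is to compute the Kubota character $\chi$ on the specific matrix $G := \sigma_i \pMatrix{a}{b}{c}{d} \sigma_l^{-1} \in \Gamma_1(16m)$ by writing out its top-left and bottom-left entries and then applying the quartic reciprocity law \eqref{bilaw} together with the supplementary laws \eqref{2.05}. First I would record
\[ A := \alpha u + \beta v, \qquad C := \gamma u + \delta v, \qquad u := a\delta' - b\gamma', \quad v := c\delta' - d\gamma', \]
so that $\chi(G) = (C/A)_4$ and $\overline\chi_{il}(c,d) = \overline{(C/A)_4}$. The condition $G \in \Gamma_1(16m)$ gives $A \equiv 1$ and $C \equiv 0 \pmod{16m}$; combined with the standing hypothesis $\alpha \equiv 0 \pmod{16m}$ and the identity $\alpha\delta - \beta\gamma = 1$, this forces $u \equiv \delta$ and $v \equiv -\gamma \pmod{16m}$, and also shows that $\gcd(\alpha,A) = 1$. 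Using $\det(\sigma_i) = 1$, the identities $v = \alpha C - \gamma A$ and $u = \delta A - \beta C$ then let me reduce $C \pmod A$ to $v/\alpha$, giving the factorisation
\[ (C/A)_4 = (v/A)_4 \, (\alpha/A)_4^{-1}. \]

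Since $A$ is primary (in fact $A \equiv 1 \pmod{16}$), the supplementary laws \eqref{2.05} show that $(i/A)_4 = (\lambda/A)_4 = 1$, so no unit factors will appear when stripping $v$ and $\alpha$ to their primary parts before applying \eqref{bilaw}. Quartic reciprocity then converts $(v/A)_4$ into $(A/v)_4$ and $(\alpha/A)_4$ into $(A/\alpha)_4$, each up to a sign $(-1)^{C(\cdot,\cdot)}$. Because $A \equiv 1 \pmod{16m}$ and $\alpha \equiv 0 \pmod{16m}$ and $\gamma$ is primary, the factor involving $A$ modulo $\alpha$ simplifies to $(\alpha/\gamma)_4^{-1}$ once $A$ is written as $1 + 16m A_0$ and expanded.

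For Case 1 ($\lambda \nmid \gamma'$) I would choose the scaling matrix $\sigma_l$ so that $\delta'$ is divisible by a high power of $\lambda$, as is permitted by adjusting $(\beta',\delta')$ within the stabiliser of $\kappa_l$. This makes $v \equiv -d\gamma'$ and forces $(v/A)_4$ to split into $(-\gamma'/A)_4(d/A)_4$ modulo higher-order terms killed by the $\Gamma_1(16m)$-congruence; using $A \equiv 1 \pmod{16m}$ and $c \equiv -\gamma\alpha' \pmod{16m}$ to substitute $d \mapsto$ its cuspidal representative and then reciprocity on each factor rearranges the product into exactly $(\alpha/\gamma)_4^{-1}(-\alpha'/\gamma')_4(c/d)_4(-1)^{C(\gamma',\gamma)}$. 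For Case 2 ($\lambda \nmid \alpha'$) the symmetric manipulation, choosing $\beta'$ highly divisible by $\lambda$ and using $\alpha'\delta' = 1 + \beta'\gamma'$ to move factors, yields $(\gamma'/\alpha')_4$ and $(d/c)_4$ in place of their Case 1 counterparts, with the sign changing to $(-1)^{C(\alpha',\gamma)}$.

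The main obstacle will be the careful bookkeeping of the reciprocity signs $(-1)^{C(\cdot,\cdot)}$ and the unit contributions from the supplementary laws as the various $\operatorname{SL}_2$ entries are swapped and reduced modulo $16m$; the primariness of $A$ (from $A \equiv 1 \pmod{16}$) together with the conventions fixing $\gamma$ primary in our setting and $\gamma'$ or $\alpha'$ primary in each case is precisely what makes every spurious unit collapse and leaves the clean expressions in the two branches of the statement.
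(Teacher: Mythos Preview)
Your overall strategy---compute $\chi(G)$ directly from the definition by writing out the entries $A,C$ of $G=\sigma_i\pmatrix abcd\sigma_l^{-1}$ and then reducing $(C/A)_4$ via the identity $\alpha C\equiv v\pmod A$---is a legitimate alternative to what the paper does, but in the form you have written it there are two genuine gaps.

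First, the factorisation $(C/A)_4=(v/A)_4(\alpha/A)_4^{-1}$ requires $(\alpha,A)=1$, and this does \emph{not} follow merely from $A\equiv 1$, $\alpha\equiv 0\pmod{16m}$. A prime $p\nmid 16m$ with $p\mid\alpha$ can perfectly well divide $A$ (equivalently divide $v$); nothing in your congruence data rules this out. You can repair this by choosing the representative $\alpha$ of the cusp judiciously, but that must be said and checked.

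Second, and more seriously, the step ``$(v/A)_4$ splits into $(-\gamma'/A)_4(d/A)_4$'' is not justified. Choosing $\delta'$ highly divisible by $\lambda$ gives $v=c\delta'-d\gamma'\equiv -d\gamma'$ only modulo a high power of $\lambda$, whereas $(v/A)_4$ depends on $v$ modulo $A$, and $A$ is \emph{coprime} to $\lambda$. So $c\delta'$ cannot simply be dropped; the top argument of a quartic symbol is not additive, and the $\lambda$-divisibility of $\delta'$ gives you nothing about $v\pmod A$. The phrase ``higher-order terms killed by the $\Gamma_1(16m)$-congruence'' does not correspond to any valid identity here.

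The paper sidesteps both issues by invoking \cite[Lemma~3]{Suz1}, which is exactly the Kubota cocycle computation you are attempting; it gives at once
\[
\overline\chi_{il}(c,d)=\leg{\alpha}{\gamma}_j^{-1}\leg{-c\beta'+d\alpha'}{d\gamma'-c\delta'}_j,
\]
so the working modulus is $d\gamma'-c\delta'=-v$ rather than $A$. The paper then introduces $e=(c,\gamma')$ (resp.\ $f=(d,\delta')$), uses the determinantal identity $\gamma'(-c\beta'+d\alpha')=c+\alpha'(d\gamma'-c\delta')$ to reduce the numerator modulo the denominator, and only \emph{then} exploits the high $\lambda$-divisibility of $\delta'$ (resp.\ $\beta'$) inside a symbol whose top argument is $c/e$---where the $\lambda$-power in $c\delta'/e=(c/e)\delta'$ is genuinely killed after one more reciprocity flip. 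The subsequent two pages of sign bookkeeping with $(-1)^{C(\cdot,\cdot)}$ are unavoidable in either route; your sketch ``rearranges the product into exactly \ldots'' understates what is needed.
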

\begin{proof}
We take the convention that $\gamma'$ (resp. $\alpha'$) is primary when  $\lambda \nmid \gamma'$ (resp. $\lambda \nmid \alpha'$). It then follows from the conditions $d \equiv -\gamma \beta' \pmod {16m}, \alpha'\delta'-\gamma'\beta' =1$ that if $\lambda \nmid \gamma'$, then $d\gamma'-c\delta' \equiv d\gamma' \equiv -\gamma \gamma'\beta' \equiv 1 \pmod {\lambda^3}$. Similarly, using the conditions $c \equiv -\gamma \alpha' \pmod {16m}, d \equiv -\gamma \beta' \pmod {16m}, \alpha'\delta'-\gamma'\beta' =1$ we have $d\gamma'-c\delta' \equiv  -c\delta' \equiv \gamma \alpha' \delta' \equiv 1 \pmod {\lambda^3}$ when $\lambda \nmid \alpha'$. Thus, in either case, $d\gamma'-c\delta'$ is primary.  Then applying \cite[Lemma 3]{Suz1} yields
\begin{align*}
%%\label{Mijcomp}
\begin{split}
\overline \chi_{il}(c,d)=& \overline \chi \Big (
\begin{pMatrix}
\alpha \beta
\gamma \delta
\end{pMatrix}
\begin{pMatrix}
a b
c d
\end{pMatrix}
\begin{pMatrix}
{\alpha'} {\beta'}
{\gamma'} {\delta'}
\end{pMatrix}^{-1} \Big )
= \overline \chi \Big (
\begin{pMatrix}
\alpha \beta
\gamma \delta
\end{pMatrix}
\begin{pMatrix}
{a\delta'-b\gamma'} {-a\beta'+b\alpha'}
{c\delta'-d\gamma'} {-c\beta'+d\alpha'}
\end{pMatrix}
\Big )
= \leg {\alpha}{\gamma}^{-1}_j\leg {-c\beta'+d\alpha'}{d\gamma'-c\delta'}_j.
\end{split}
\end{align*}

  When $\lambda \nmid \gamma'$, we write for simplicity $e=(c, \gamma')$ with $e$ being primary. It follows that $(e, 2)=1$ and $(c/e, \gamma'/e)=1$. As $(\gamma', \delta')=(c,d)=1$, we see that $(c\gamma'/e^2, d\gamma'/e-c\delta'/e)=1$,  so that
\begin{align*}
\begin{split}
\leg {-c\beta'+d\alpha'}{d\gamma'-c\delta'}_j =& \leg {\gamma'/e}{d\gamma'/e-c\delta'/e}^{-1}_j\leg {\gamma'/e}{d\gamma'/e-c\delta'/e}_j\leg {-c\beta'+d\alpha'}{d\gamma'/e-c\delta'/e}_j\leg {-c\beta'+d\alpha'}{e}_j \\
=& \leg {\gamma'/e}{d\gamma'/e-c\delta'/e}^{-1}_j\leg {\gamma'/e}{d\gamma'/e-c\delta'/e}_j\leg {-c\beta'+d\alpha'}{d\gamma'/e-c\delta'/e}_j\leg {d\alpha'}{e}_j.
\end{split}
\end{align*}

Mark that
\begin{align*}
%% \label{phiexp}
\gamma'(-c\beta'+d\alpha')=-c\beta'\gamma'+d\gamma'\alpha'=c(1-\alpha'\delta')+d\gamma'\alpha'=c+\alpha'(d\gamma'-c\delta').
\end{align*}

  It follows that
\begin{align*}
\begin{split}
\leg {-c\beta'+d\alpha'}{d\gamma'-c\delta'}_j
=& \leg {\gamma'/e}{d\gamma'/e-c\delta'/e}^{-1}_j\leg {c/e}{d\gamma'/e-c\delta'/e}_j\leg {d\alpha'}{e}_j.
\end{split}
\end{align*}

  As $\delta'$ is divisible by a high power of $\lambda$ and $(e, 2)=1$, $c\delta'/e$ is also divisible by a high power of $\lambda$ so that
\begin{align*}
\leg {c/e}{d\gamma'/e-c\delta'/e}_j=\leg {c/e}{d\gamma'/e}_j.
\end{align*}

 Observe further that $d\gamma'-c\delta' \equiv \gamma \pmod {16m}$ so that
\begin{align*}
 \frac {N(d\gamma'/e-c\delta'/e)-1}4+\frac {N(e)-1}4 \equiv \frac {N(a(d\gamma'/e-c\delta'/e))-1}4 \equiv\frac {N(d\gamma'-c\delta')-1}4 \equiv \frac {N(\gamma)-1}4  \pmod 2.
\end{align*}
  Note also that
\begin{align*}
 \frac {N(\gamma'/e)-1}4+\frac {N(e)-1}4 \equiv \frac {N(e \cdot \gamma'/e)-1}4 =\frac {N(\gamma')-1}4   \pmod 2.
\end{align*}
  We then deduce that
\begin{align*}
 \leg {\gamma'/e}{d\gamma'/e-c\delta'/e}^{-1}_j=\leg {-c\delta'/e}{\gamma'/e}^{-1}_j(-1)^{C(\gamma'/e, d\gamma'/e-c\delta'/e)}=\leg {-c\delta'/e}{\gamma'/e}^{-1}_j(-1)^{(\frac {N(\gamma')-1}4-\frac {N(e)-1}4)(\frac {N(\gamma)-1}4-\frac {N(e)-1}4)}.
\end{align*}

The conditions $d \equiv -\gamma \beta' \pmod {16m}, \alpha'\delta'-\gamma'\beta' =1$ imply that $d$ primary.  The above leads to
\begin{align*}
\overline \chi_{ij}(c,d)=& \leg {\alpha}{\gamma}^{-1}_j\leg {-\delta'}{\gamma'/e}^{-1}_j\leg {c/e}{d}_j\leg {d\alpha'}{e}_j (-1)^{(\frac {N(\gamma')-1}4-\frac {N(e)-1}4)(\frac {N(\gamma)-1}4-\frac {N(e)-1}4)} \\
=& \leg {\alpha}{\gamma}^{-1}_j\leg {\alpha'}{\gamma'}_j\leg {-1}{\gamma'/e}^{-1}_j\leg {c/a}{d}_j\leg {d}{e}_j(-1)^{(\frac {N(\gamma')-1}4-\frac {N(e)-1}4)(\frac {N(\gamma)-1}4-\frac {N(e)-1}4)}\\
=& \leg {\alpha}{\gamma}^{-1}_j\leg {\alpha'}{\gamma'}_j\leg {-1}{\gamma'}_j\leg {c}{d}_j\leg {-1}{e}_j(-1)^{C(d,e)}(-1)^{(\frac {N(\gamma')-1}4-\frac {N(e)-1}4)(\frac {N(\gamma)-1}4-\frac {N(e)-1}4)},
\end{align*}
 where the second equality above follows by noting that $\alpha'\delta' \equiv 1 \pmod {\gamma'}$. \newline

  From $\alpha'\delta'-\beta'\gamma'=1$, we see that if $\delta'$ is divisible by a high power of $\lambda$, then $\beta'\gamma' \equiv -1 \pmod {16}$ so that
\begin{align*}
 \frac {N(\gamma')-1}4+\frac {N(\beta')-1}4 \equiv \frac {N(\beta' \cdot \gamma')-1}4 \equiv \frac {N(-1)-1}4 \equiv 0  \pmod 2.
\end{align*}
 The above implies that
\begin{align*}
 \frac {N(\beta')-1}4 \equiv \frac {N(\gamma')-1}4   \pmod 2.
\end{align*}
  Thus, as $d \equiv -\gamma \beta' \pmod {16m}$,
\begin{align*}
 \frac {N(d)-1}4 \equiv \frac {N( -\gamma \beta')-1}4 \equiv \frac {N(\gamma)-1}4+ \frac {N(\beta')-1}4 \equiv \frac {N(\gamma)-1}4+ \frac {N(\gamma')-1}4 \pmod 2.
\end{align*}

  We apply the above to see that
\begin{align*}
 \leg {-1}{e}_j(-1)^{C(d,e)} & (-1)^{(\frac {N(\gamma')-1}4-\frac {N(e)-1}4)(\frac {N(\gamma)-1}4-\frac {N(e)-1}4)} \\
=& (-1)^{(\frac {N(e)-1}{4}+(\frac {N(\gamma)-1}4+ \frac {N(\gamma')-1}4)\cdot \frac {N(e)-1)}{4}+(\frac {N(\gamma')-1}4-\frac {N(e)-1}4)(\frac {N(\gamma)-1}4-\frac {N(e)-1}4))} =  (-1)^{C(\gamma', \gamma)}.
\end{align*}

  We thus conclude that when  $\lambda \nmid \gamma'$, we have
\begin{align}
\label{Mijcomp0}
\overline \chi_{il}(c,d)=&  \leg {\alpha}{\gamma}^{-1}_j\leg {-\alpha'}{\gamma'}_j\leg {c}{d}_j(-1)^{C(\gamma', \gamma)}.
\end{align}

   When $\lambda \nmid \gamma'$, we write for simplicity $f=(d, \delta')$ with $f$ being primary. It follows that $(f, 2)=1$ and $(d/f, \delta'/f)=1$. As we have $(\gamma', \delta')=(c,d)=1$, we see that $(d\delta'/f^2, d\gamma'/f-c\delta'/f)=1$,  so that we have
\begin{align*}
\begin{split}
\leg {-c\beta'+d\alpha'}{d\gamma'-c\delta'}_j =& \leg {\delta'/f}{d\gamma'/f-c\delta'/f}^{-1}_j\leg {\delta'/f}{d\gamma'/f-c\delta'/f}_j\leg {-c\beta'+d\alpha'}{d\gamma'/f-c\delta'/f}_j\leg {-c\beta'+d\alpha'}{f}_j \\
=& \leg {\delta'/f}{d\gamma'/f-c\delta'/f}^{-1}_j\leg {\delta'/f}{d\gamma'/f-c\delta'/f}_j\leg {-c\beta'+d\alpha'}{d\gamma'/f-c\delta'/f}_j\leg {-c\beta'}{f}_j.
\end{split}
\end{align*}

  Note that
\begin{align*}
%% \label{phiexp}
\delta'(-c\beta'+d\alpha')=-c\beta'\delta'+d\delta'\alpha'=d(1+\gamma'\beta')-c\beta'\delta'=d+\beta'(d\gamma'-c\delta').
\end{align*}

  It follows that
\begin{align*}
\begin{split}
\leg {-c\beta'+d\alpha'}{d\gamma'-c\delta'}_j
=& \leg {\delta'/f}{d\gamma'/f-c\delta'/f}^{-1}_j\leg {d/f}{d\gamma'/a-c\delta'/a}_j\leg {-c\beta'}{f}_j.
\end{split}
\end{align*}

  Recall that when $\lambda \nmid \alpha'$, the value of $\beta'$ is chosen so that it is divisible by a high power of $\lambda$. For this $\beta'$, the condition $d \equiv -\gamma \beta' \pmod {16}$ implies that $d \equiv 0 \pmod {16}$. As $(f, 2)=1$, this implies that $d\gamma'/f \equiv 0 \pmod {16}$ so that we have
\begin{align*}
\leg {d/f}{d\gamma'/f-c\delta'/f}_j=\leg {d/f}{c\delta'/f}_j.
\end{align*}

  Again using  $d\gamma'-c\delta' \equiv \gamma \pmod {16m}$, we see that
\begin{align*}
 \frac {N(d\gamma'/f-c\delta'/f)-1}4+\frac {N(f)-1}4 \equiv \frac {N(f(d\gamma'/f-c\delta'/f))-1}4 \equiv\frac {N(d\gamma'-c\delta')-1}4 \equiv \frac {N(\gamma)-1}4  \pmod 2.
\end{align*}
Furthermore,
\begin{align*}
 \frac {N(\delta'/f)-1}4+\frac {N(f)-1}4 \equiv \frac {N(f \cdot \delta'/f)-1}4 =\frac {N(\delta')-1}4   \pmod 2.
\end{align*}
Thus it can be deduced that
\begin{align*}
 \leg {\delta'/f}{d\gamma'/f-c\delta'/f}^{-1}_j=\leg {d\gamma'/f}{\delta'/f}^{-1}_j(-1)^{C(\delta'/f, d\gamma'/f-c\delta'/f)}=\leg {d\gamma'/f}{\delta'/f}^{-1}_j(-1)^{(\frac {N(\delta')-1}4-\frac {N(f)-1}4)(\frac {N(\gamma)-1}4-\frac {N(f)-1}4)}.
\end{align*}

 Observe that the condition $c \equiv -\gamma \alpha' \pmod {16m}$ implies that $-c$ primary. We apply the above to see that
\begin{align*}
\overline \chi_{ij}(c,d)=& \leg {\alpha}{\gamma}^{-1}_j\leg {\gamma'}{\delta'/f}^{-1}_j\leg {d/f}{c}_j\leg {-c\beta'}{f}_j (-1)^{(\frac {N(\delta')-1}4-\frac {N(f)-1}4)(\frac {N(\gamma)-1}4-\frac {N(f)-1}4)} \\
=& \leg {\alpha}{\gamma}^{-1}_j\leg {\gamma'}{\delta'}^{-1}_j\leg {d/f}{c}_j\leg {c}{f}_j(-1)^{(\frac {N(\delta')-1}4-\frac {N(f)-1}4)(\frac {N(\gamma)-1}4-\frac {N(f)-1}4)}\\
=& \leg {\alpha}{\gamma}^{-1}_j\leg {\gamma'}{\delta'}^{-1}_j\leg {d}{c}_j\leg {-1}{f}_j(-1)^{C(-c,f)}(-1)^{(\frac {N(\delta')-1}4-\frac {N(f)-1}4)(\frac {N(\gamma)-1}4-\frac {N(f)-1}4)},
\end{align*}
 where the second equality above follows by noting that $-\beta'\gamma' \equiv 1 \pmod {\delta'}$. \newline

  As $\alpha'\delta'-\beta'\gamma'=1$, if $\beta'$ is divisible by a high power of $\lambda$, then $\alpha'\delta' \equiv 1 \pmod {16}$ so that
\begin{align*}
 \frac {N(\alpha')-1}4+\frac {N(\delta')-1}4 \equiv \frac {N(\alpha' \cdot \delta')-1}4 \equiv  0  \pmod 2.
\end{align*}
 This implies that
\begin{align*}
 \frac {N(\delta')-1}4 \equiv \frac {N(\alpha')-1}4   \pmod 2.
\end{align*}
  Also, as $c \equiv -\gamma \alpha' \pmod {16m}$, we see that
\begin{align*}
 \frac {N(-c)-1}4 \equiv \frac {N( \gamma \alpha')-1}4 \equiv \frac {N(\gamma)-1}4+ \frac {N(\alpha')-1}4  \pmod 2.
\end{align*}

Applying the above yields that
\begin{align*}
 \leg {-1}{f}_j(-1)^{C(-c,f)}& (-1)^{(\frac {N(\delta')-1}4-\frac {N(f)-1}4)(\frac {N(\gamma)-1}4-\frac {N(f)-1}4)} \\
& =(-1)^{\frac {N(f)-1}{4}+(\frac {N(\gamma)-1}4+ \frac {N(\alpha')-1}4)\cdot \frac {N(f)-1}{4}+(\frac {N(\alpha')-1}4-\frac {N(f)-1}4)(\frac {N(\gamma)-1}4-\frac {N(f)-1}4)} =  (-1)^{C(\alpha', \gamma)}.
\end{align*}

  Moreover, we deduce from $\alpha'\delta'-\beta'\gamma'=1$ and that the quantity $\beta'$ is divisible by a high power of $\lambda$ that
\begin{align*}
 & 1=\leg {\gamma'}{\alpha'\delta'-\beta'\gamma'}_j=\leg {\gamma'}{\alpha'\delta'}_j.
\end{align*}
  It follows that
\begin{align*}
 & \leg {\gamma'}{\delta'}^{-1}_j=\leg {\gamma'}{\alpha'}_j.
\end{align*}

  We thus conclude that when  $\lambda \nmid \alpha'$, we have
\begin{align}
\label{Mijcomp1}
\begin{split}
\overline \chi_{il}(c,d)
=& \leg {\alpha}{\gamma}^{-1}_j\leg {\gamma'}{\alpha'}_j\leg {d}{c}_j(-1)^{C(\alpha', \gamma)}.
\end{split}
\end{align}

  The lemma now follows from \eqref{Mijcomp0} and \eqref{Mijcomp1}.
\end{proof}

\subsection{Relations between residues}
\label{Sec: resrel}

As given on \cite[p. 76]{Suz1} and \cite[p. 648]{Diac}, the Eisenstein series defined in \eqref{E12def} are holomorphic in the region $\Re (s)>1$ except for a possible simple pole at $s =5/4$.  With the aid of Lemma \ref{lemchieval}, our next result establishes an analogue to \cite[Proposition 9]{Suz1}, which allows us to compare the corresponding residues. 
\begin{prop}
\label{lemgammaeven} With the notation as above, let $\psi$ be any ray class character modulo $16$. Then for any primary $k$, we have
\begin{align}
\label{Resrel}
 \res_{s=5/4} E_k(w, s, \psi, 1)=\prod_{\substack{\varpi \odd \\ \varpi|m}}(1+N(\varpi)^{-1})\res_{s=5/4} E_k(w, s, \psi, m),
\end{align}
  where $E_k(w, s, \psi, m)$ is defined in \eqref{E12def}.
\end{prop}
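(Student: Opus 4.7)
The proof extends Suzuki's argument in Proposition 9 of \cite{Suz1}, which handles the case of trivial $\psi$, to the twisted setting considered here. Since the residue at $s=5/4$ of an Eisenstein series is determined by its Fourier expansion at any cusp, I would work at the infinity cusp, where the nonzero-frequency Fourier coefficients are given by \eqref{E12exp} and the zero-frequency coefficient is handled by the analogous expression carrying the extra factor $\pi(s-1)^{-1}$.

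My first step is to reduce \eqref{Resrel} to the case of a single prime. Because $m$ is primary and square-free, if I can establish that for every primary prime $\varpi$ dividing $m$,
\[
\res_{s=5/4} E_k(w,s,\psi,m/\varpi) = (1+N(\varpi)^{-1}) \res_{s=5/4} E_k(w,s,\psi,m),
\]
then iterating over the prime factors of $m$ yields the product on the right-hand side of \eqref{Resrel}. Passing from $m/\varpi$ to $m$ has two effects on the Fourier coefficient \eqref{E12exp}: the summation is further restricted by $(c,\varpi)=1$, and the residue symbol $\leg{m/\varpi}{c}_j$ acquires an additional twist $\leg{\varpi}{c}_j$.

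For a fixed $\varpi \mid m$, I would decompose $c = \varpi^\ell c_0$ with $(c_0,\varpi)=1$ in $\phi(s,\nu,\psi,m/\varpi;k)$ and apply the twisted multiplicativity \eqref{rel3} together with the explicit evaluation \eqref{rel4} to factor out the $\varpi$-local contribution. The term $\ell=0$ recovers a Dirichlet series over $c_0$ coprime to $m$ that differs from $\phi(s,\nu,\psi,m;k)$ only by the missing twist $\leg{\varpi}{c_0}_j$, while the terms with $\ell \geq 1$ are nonzero only for a few values of $\ell$ dictated by \eqref{rel4}. Invoking Patterson's theta-function description of the residual spectrum (see \cite[p.~200, Lemma]{P} and the treatment in \cite{Suz1}), the residue at $s=5/4$ is supported on those frequencies $\nu$ for which the Gauss-sum Dirichlet series genuinely has a pole, and on this support the auxiliary quartic twists collapse, producing the clean ratio $1+N(\varpi)^{-1}$.

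The principal obstacle will be the careful bookkeeping of the intertwined twists in the $\varpi$-local computation---the character $\psi$, the symbols $\leg{m/\varpi}{c}_j$ and $\leg{\varpi}{c_0}_j$, and the cocycle sign $(-1)^{C(c_0,\varpi^\ell)}$ from \eqref{rel3}---so that the non-residue-contributing terms cancel and only the desired local factor survives. Lemma \ref{lemchieval} will play a supporting role, since it pins down the explicit form of the character values $\overline\chi_{il}(c,d)$ at the other essential cusps, ensuring the consistency of the residue calculations across cusps and confirming that the twist by $\psi$ does not interfere with the $\varpi$-local contribution.
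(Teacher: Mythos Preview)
Your approach diverges from the paper's and, as written, has a genuine gap.

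The paper does \emph{not} work with the Fourier expansion at infinity. Instead it invokes Suzuki's principle (stated on \cite[p.~77]{Suz1}): the residue at $s=5/4$ is a square-integrable automorphic form, hence uniquely determined by its \emph{singular values} (the constant Fourier coefficients) at \emph{every} essential cusp $\kappa_l$. The proof therefore reduces \eqref{Resrel} to the identity
\[
\res_{s=5/4}\phi(s,\psi,1,\kappa_l)=\prod_{\varpi\mid m}(1+N(\varpi)^{-1})\,\res_{s=5/4}\phi(s,\psi,m,\kappa_l)
\]
for all $\kappa_l$, and then uses Lemma~\ref{lemchieval} as the \emph{central} computational tool (not a supporting one) to evaluate $\phi(s,\psi,m,\kappa_l)$ explicitly. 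After a lengthy residue-class decomposition of $c$ and $d$, the answer factors as an $m$-independent piece times $\zeta^{(m)}_K(4s-4)/\zeta^{(m)}_K(4s-3)$ and a further factor that equals $1$ at $s=5/4$; the Euler product for $\zeta_K$ then delivers the claimed local factors.

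Your plan to compare the nonzero-frequency Fourier coefficients \eqref{E12exp} at infinity runs into the difficulty that these coefficients are precisely the Gauss-sum Dirichlet series whose residues are \emph{not} known a priori---indeed, the relation \eqref{Fourierrel} between those residues is \emph{deduced from} Proposition~\ref{lemgammaeven} in the paper, not used to prove it. Your key step, that ``the auxiliary quartic twists collapse'' on the support of the pole, is exactly the nontrivial assertion that would need independent justification; without it the argument is circular. Note also that $E_k(w,s,\psi,1)$ and $E_k(w,s,\psi,m)$ are Eisenstein series on the different groups $\Gamma_1(16)$ and $\Gamma_1(16m)$, so a direct Fourier-coefficient comparison must account for the frequency rescaling $\nu\mapsto m\nu$ (visible in \eqref{Fourierrel}), which your sketch omits. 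The singular-value route is chosen precisely because constant terms avoid Gauss sums and can be computed in closed form.
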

\begin{proof}
Firstly,
\begin{align*}
%%\label{EkE}
 E_k(w, s, \psi, m) =& \frac {1}{\#h_{(16)}}\sum_{\mathfrak{b}}\mathfrak{b}((-k)^{-1})\sum_{\substack{ \gamma \bmod {16m} }} \psi(\gamma)\mathfrak{b}(\gamma)E(w, s, \gamma \shortmod{16m}),
\end{align*}
 where $\mathfrak{b}$ runs over all ray class characters modulo $16$. It therefore suffices to establish \eqref{Resrel} with $E_k$ replaced by $E$ throughout. Further, it follows from \cite[Lemma, p. 200]{P} that in this case, the residue on both sides of \eqref{Resrel} is $0$ unless $\psi^4$ is the principal Hecke character, a condition we shall henceforth assume throughout the proof. \newline

  We now apply the principle stated on \cite[p. 77]{Suz1} to see that it suffices to show that the singular values of both sides of \eqref{Resrel} (with $E_k$  replaced by $E$ throughout) match with each other. Thus, it remains to show that
\begin{align}
\label{Resphirel}
 \res_{s=5/4} \phi(s, \psi, 1, \kappa_l)=\prod_{\substack{\varpi \odd \\ \varpi|m}}(1+N(\varpi)^{-1}) \res_{s=5/4} \phi(s, \psi, m, \kappa_l),
\end{align}
where $\phi$ is given in \eqref{psiexpression}. \newline

 To do so, we fix an $\eta$ co-prime to $\lambda$ and we write $c=i^a\lambda^bMc'$ with $0 \leq a \leq 3$, $b \geq 0$, $M|m^{\infty}$, with $M$, $c'$ primary and
$(c', m)=1$. Then there exists $A_i, 1 \leq i \leq 3$ such that
\begin{align*}
%%\label{Gausssum1}
 mMc'A_1+\lambda^{b+8}c'A_2+\lambda^{b+8}mMA_3=1.
\end{align*}
  Let
\begin{align*}
%%\label{Gausssum1}
 d=mMc'A_1d_1+\lambda^{b+8}c'A_2d_2+\lambda^{b+8}mMA_3d_3,
\end{align*}
  where $d_1$ runs through a set of residues modulo $\lambda^{b+8}$ subject to $d_1 \equiv \eta \pmod{16}$, $d_2$ runs through
a set of residues  modulo $mM$ subject to $d_2 \equiv \eta \pmod m$, and $d_3$  runs through a complete set
of reduced residues modulo $c'$. Then
\begin{align*}
%%\label{Gausssum1}
 \leg {c}{d}_j=\leg {i^a\lambda^b}{d}_j\leg {Mc'}{d}_j=(-1)^{C(d, Mc')}\leg {i^a\lambda^b}{d}_j\leg {d_2}{M}_j\leg {d_3}{c'}_j=(-1)^{C(\eta, Mc')}\leg {i^a\lambda^b}{d_1}_j\leg {d_2}{M}_j\leg {d_3}{c'}_j.
\end{align*}
  It follows that
\begin{align*}
\begin{split}
%%\label{Gausssum1}
\sum_{\substack{d \pmod {16mc} \\ d \equiv \eta \pmod{16m}}}\leg {c}{d}_j= &(-1)^{C(\eta, Mc')}\sum_{\substack{d_1 \pmod {\lambda^{b+8}} \\ d_1 \equiv \eta \pmod{16}}}\leg {i^a\lambda^b}{d_1}_j\sum_{\substack{d_2 \pmod {mM} \\ d_2 \equiv \eta \pmod{m}}}\leg {d_2}{M}_j\sum_{\substack{d_3 \pmod {c'}}}\leg {d_3}{c'}_j \\
=&
\begin{cases}
 (-1)^{C(\eta, Mc')} \leg {i^a\lambda^b}{\eta}_j2^b\leg {\eta}{M}_jN(M)\varphi_K(c'), & \text{when $c'$ is a fourth power}, \\
 0, & \text{otherwise}.
\end{cases}
\end{split}
\end{align*}

   We thus deduce from \eqref{psidef}, \eqref{psiexpression}, Lemma \ref{lemchieval} and the above that if $\lambda \nmid \gamma'$,
\begin{align}
\label{sumpsi}
\begin{split}
   \phi(s, \psi, m, \kappa_l) =& \frac{\pi}{s-1} \leg {-\alpha'}{\gamma'}_j  \sum_{\substack{\gamma \bmod {16m} }} (-1)^{C(\gamma', \gamma)} \psi(\gamma) \sum_{c \equiv -\gamma \alpha' \bmod{16m}}\sum_{\substack{d \bmod {16mc} \\ d \equiv -\gamma \beta' \bmod{16m}}}\leg {c}{d}_jN(c)^{-s} \\
=&  \frac{\pi}{s-1} \leg {-\alpha'}{\gamma'}_j  \sum_{0 \leq a \leq 3}\sum_{b \geq 1}2^{b(1-s)}\sum_{\substack{M |m^{\infty} \\ M \odd}}N(M)^{1-s} \sum_{\substack{(c, m)=1 \\ c \odd}}\frac{\varphi_K(c^4)}{N(c)^{4s}} \\
& \hspace*{1cm} \times \sum_{\substack{\gamma \bmod {16m} \\ -\gamma \alpha' \equiv i^a\lambda^{b}Mc^4 \bmod {16m}} } (-1)^{C(\gamma', \gamma)} (-1)^{C(-\gamma\beta', M)} \psi(\gamma)  \leg {i^a\lambda^b}{\gamma \beta'}_j\leg {-\gamma \beta'}{M}_j,
\end{split}
\end{align}
  where the last equality above follows by noting that for any primary $c$,
\begin{align*}
  C(-\gamma\beta', Mc^4) \equiv  C(-\gamma\beta', M)  \pmod 2.
\end{align*}

  Suppose that $m=m_1m_2, m_1|\alpha', (\alpha', m_2)=1$. Then the relation $-\gamma \alpha' \equiv i^a\lambda^{b}Mc^4 \pmod {16m}$ implies that $m_1|M$. Moreover, one must have $(M, m_2)=1$ since otherwise the condition $-\gamma \alpha' \equiv i^a\lambda^{b}Mc^4 \pmod {16m}$ implies that $\gamma \equiv 0 \pmod {m_2}$ so that $\leg {\gamma}{M}_j=0$. Thus the condition $-\gamma \alpha' \equiv i^a\lambda^{b}Mc^4 \pmod {16m}$ becomes $-\gamma \alpha' \equiv i^a\lambda^{b}Mc^4 \pmod {16m_2}$. Upon writing $\gamma=16m_2\gamma'+m_1c_1$ with $m_1c_1$ satisfying the condition $-\alpha' m_1c_1 \equiv i^a\lambda^{b}Mc^4 \pmod {16m_2}$ and $\gamma'$ running over a complete residue class modulo $m_1$ and noticing that $\psi$ is a ray class character modulo $16$, it follows that the sum over such $\gamma \pmod {16m}$ contains a sum
  \[ \sum_{\gamma' \bmod {m_1}}\leg {\gamma'}{M}_j=0 \]
 unless $M$ is a fourth power. We may thus write $M=M'^4$ with $m_1|M'^4$ and note that the congruence condition satisfied by $\gamma$ now becomes $-\gamma \alpha' \equiv i^a\lambda^{b}M'^4c^4 \pmod {16m_2}$, which can be written equivalently as $-\gamma \alpha' \equiv i^a\lambda^{b}M'^4c^4 \pmod {16}, -\gamma \alpha' \equiv i^a\lambda^{b}M'^4c^4 \pmod {m_2}$. The last condition may be further written as $\gamma  \equiv -i^a\lambda^{b}M'^4c^4\alpha'^{-1} \pmod {m_2}$, where we write $\alpha'^{-1}$ for the inverse of $\alpha'$ modulo $m_2$. We further write $\alpha'=\lambda^u\eta$ with $(\eta, \lambda)=1$. When $u \geq 8$, then the condition $-\gamma \alpha' \equiv i^a\lambda^{b}M'^4c^4 \pmod {16}$ is superfluous provided that $b \geq 8$. When $0 \leq u \leq 7$, then we must have $b = u$ for otherwise $\psi(\gamma)=0$.
The condition $-\gamma \alpha' \equiv i^a\lambda^{b}M'^4c^4 \pmod {16}$ thus becomes $\gamma \equiv -i^aM'^4c^4\eta^{-1} \pmod {\lambda^{8-u}}$, where we write $\eta^{-1}$ for the inverse of $\eta$ modulo $\lambda^{8-u}$. Without loss of generality, we may assume $0 \leq u \leq 7$ so that we may write $\gamma=(-i^aM'^4c^4\eta^{-1}+\gamma_1M'^4c^4\lambda^{8-u})mC_1-i^a\lambda^{d}M'^4c^4\alpha'^{-1}16m_1C_2+\gamma_216m_2C_3$ with $C_1, C_2, C_3$ satisfying the conditions $mC_1 \equiv 1 \pmod {16}, 16m_1C_2 \equiv 1 \pmod {m_2}, 16m_2C_3 \equiv 1 \pmod {m_1}$ and $\gamma_1, \gamma_2$ running over the complete residue class modulo $\lambda^{t}$ and $m_1$, respectively. Thus, when $\psi^4$ is the principal Hecke character, we have $\psi(\gamma)=\psi(-i^a\eta^{-1}+\gamma_1\lambda^{8-u})$. Similarly, we have
\begin{align*}
%%\label{sumpsi}
\begin{split}
 \leg {i^a\lambda^b}{\gamma}_j= \leg {i^a\lambda^u}{-i^a\eta^{-1}+\gamma_1\lambda^{8-u}}_j \quad \mbox{and} \quad
(-1)^{C(\gamma', \gamma)}= (-1)^{C(\gamma', -i^aM'^4c^4\eta^{-1}+\gamma_1M'^4c^4\lambda^{8-u})}=(-1)^{C(\gamma', -i^a\eta^{-1}+\gamma_1\lambda^{8-u})},
\end{split}
\end{align*}
 where the last equality holds by noting that $N(M')^4 \equiv 1 \pmod 8$ when $(M', 2)=1$. \newline

 Moreover, as $\alpha'\delta'-\beta'\gamma'=1$, we see that $(\beta', \alpha')=1$, hence $(\beta', M)=1$. Thus we have $\leg {-\beta'}{M}_j=1$ when $M$ is a fourth power. The relation $\alpha'\delta'-\beta'\gamma'=1$ and our choice that $\delta'$ is divisible by a high power of $\lambda$ also implies that $\beta' \equiv \gamma'^{-1} \pmod {16m}$ so that
\begin{align*}
%%\label{sumpsi}
\begin{split}
 \leg {i^a\lambda^d}{\beta'}_j=\leg {i^a\lambda^u}{\gamma'}^{-1}_j.
\end{split}
\end{align*}

  We then deduce from the above and \eqref{sumpsi} that when $\lambda \nmid \gamma'$,
\begin{align}
\label{sumpsi10}
\begin{split}
 \phi(s, \psi, m, \kappa_l) =&  \frac{\pi}{s-1} \leg {-\alpha'}{\gamma'}_j  \sum_{0 \leq a \leq 3}2^{u(1-s)} \\
& \hspace*{1cm} \times \sum_{\substack{\gamma_1 \bmod {\lambda^u} } } (-1)^{C(\gamma', -i^a\eta^{-1}+\gamma_1\lambda^{8-u})} \psi(-i^a\eta^{-1}+\gamma_1\lambda^{8-u})  \leg {i^a\lambda^u}{-i^a\eta^{-1}+\gamma_1\lambda^{8-u}}_j \leg {i^a\lambda^u}{\gamma'}^{-1}_j\\
& \hspace*{1cm} \times \sum_{\substack{m_1| M |m^{\infty}_1 \\ M \odd \\ M \text{ is a fourth power}}}\varphi_K(m_1) N(M)^{1-s} \sum_{\substack{(c, m)=1 \\ c \odd }}\varphi_K(c)N(c)^{3-4s} \\
=& \frac{\pi}{s-1} \leg {-\alpha'}{\gamma'}_j  \sum_{0 \leq a \leq 3}2^{u(1-s)} \\
& \hspace*{1cm} \times \sum_{\substack{\gamma_1 \bmod {\lambda^u} } } (-1)^{C(\gamma', -i^a\eta^{-1}+\gamma_1\lambda^{8-u})} \psi(-i^a\eta^{-1}+\gamma_1\lambda^{8-u})  \leg {i^a\lambda^u}{-i^a\eta^{-1}+\gamma_1\lambda^{8-u}}_j \leg {i^a\lambda^u}{\gamma'}^{-1}_j\\
& \hspace*{1cm} \times \frac{\zeta^{(m)}_K(4s-4)}{\zeta^{(m)}_K(4s-3)} \varphi_K(m_1)\prod_{\substack{ \varpi \odd \\ \varpi | m_1}}(N(\varpi)^{4s-4}-1)^{-1}.
\end{split}
\end{align}

  We also deduce from \eqref{psidef}, \eqref{psiexpression} and Lemma \ref{lemchieval} that when $\lambda \nmid \alpha'$,
\begin{align}
\label{sumpsi20}
\begin{split}
  \phi(s, \psi, m, \kappa_l) =& \frac{\pi}{s-1} \leg {\gamma'}{\alpha'}_j  \sum_{\gamma \bmod {16m}} (-1)^{C(\alpha', \gamma)} \psi(\gamma) \sum_{c \equiv -\gamma \alpha' \bmod{16m}}\sum_{\substack{d \bmod {16mc} \\ d \equiv -\gamma \beta' \bmod{16m}}}\leg {d}{c}_jN(c)^{-s} \\
=& \frac{\pi}{s-1} \leg {\gamma'}{\alpha'}_j  \sum_{\gamma \bmod {16m}} (-1)^{C(\alpha', \gamma)} \psi(\gamma) \sum_{c \equiv \gamma \alpha' \bmod{16m}}\sum_{\substack{d \bmod {16mc} \\ d \equiv -\gamma \beta' \bmod{16m}}}\leg {d}{c}_jN(c)^{-s}.
\end{split}
\end{align}

  Note that the condition $c \equiv \gamma \alpha' \pmod{16m}$ implies that $c$ is primary. Thus, we write $c=Mc'$ with $M|m^{\infty}$, $M, c'$ being $\odd$ and $(c', m)=1$ to see that
\begin{align*}
%%\label{Gausssum1}
\sum_{\substack{d \bmod {16mc} \\ d \equiv \alpha \bmod{16m}}}\leg {d}{c}_j= &
\begin{cases}
 \leg {\alpha}{M}_jN(M)\varphi_K(c'),  & \text{when $c'$ is a fourth power}, \\
 0, & \text{otherwise}.
\end{cases}
\end{align*}

  We may thus write $c$ as $Mc^4$ with $(c, m)=1$. Suppose that $m=m_1m_2, m_1|\alpha', (\alpha', m_2)=1$. Then the relation $\gamma \alpha' \equiv Mc^4 \pmod {16m}$ implies that $m_1|M$. Moreover, one must have $(M, m_2)=1$ since otherwise the condition $\gamma \alpha' \equiv Mc^4 \pmod {16m}$ implies that $\gamma \equiv 0 \pmod {m_2}$ so that $\leg {\gamma}{M}_j=0$. Thus the condition $\gamma \alpha' \equiv Mc^4 \pmod {16m}$ becomes $\gamma \alpha' \equiv Mc^4 \pmod {16m_2}$. Upon writing $\gamma=16m_2\gamma'+m_1c_1$ with $m_1c_1$ satisfying the condition $\alpha' m_1c_1 \equiv Mc^4 \pmod {16m_2}$ and $\gamma'$ running over a complete residue class modulo $m_1$, as $\psi$ is a ray class character modulo $16$, it follows that the sum over such $\gamma \pmod {16m}$ then involves with a sum over $\sum_{\gamma' \pmod {m_1}}\leg {\gamma'}{M}_j=0$ unless $M$ is a fourth power. We may thus write $M=M'^4$ with $m_1|M'^4$ and notice that the congruence condition satisfied by $\gamma$ now becomes $\gamma \alpha' \equiv M'^4c^4 \pmod {16m_2}$. We write this condition equivalently as $\gamma  \equiv M'^4c^4\alpha'^{-1} \pmod {16m_2}$, where $\alpha'^{-1}$ is the inverse of $\alpha'$ modulo $16m_2$. We may thus write $\gamma=M'^4c^4\alpha'^{-1}m_1C_1+\gamma_116m_2C_2$ with $C_1, C_2$ satisfying the conditions $m_1C_1 \equiv 1 \pmod {16m_2}, 16m_2C_2 \equiv 1 \pmod {m_1}$ and $\gamma_1$ running over a complete residue class modulo $m_1$. Thus, when $\psi^4$ is the principal Hecke character, we have $\psi(\gamma)=\psi(\alpha'^{-1})$. Similarly, we have
\begin{align*}
%%\label{sumpsi}
\begin{split}
(-1)^{C(\alpha', \gamma)}=& (-1)^{C(\alpha', M'^4c^4\alpha'^{-1}m_1C_1+\gamma_116m_2C_2)}=(-1)^{C(\alpha', \alpha'^{-1})},
\end{split}
\end{align*}
 where the last equality holds by noting that $N(M')^4 \equiv 1 \pmod 8$ when $(M', 2)=1$. \newline

 Moreover, as $\alpha'\delta'-\beta'\gamma'=1$, we get that $(\beta', \alpha')=1$, hence $(\beta', M)=1$. Thus $\leg {-\beta'}{M}_j=1$ if $M$ is a fourth power. We then deduce from the above and \eqref{sumpsi20} that when $\lambda \nmid \alpha'$,
\begin{align}
\label{sumpsi1}
\begin{split}
  \phi(s, \psi, m, \kappa_l)
=&  \frac{\pi}{s-1}\leg {\gamma'}{\alpha'}_j  \psi(\alpha'^{-1})(-1)^{C(\alpha', \alpha'^{-1})} \sum_{\substack{m_1| M |m^{\infty}_1 \\ M \odd \\ M \text{ is a fourth power}}}\frac{\varphi_K(m_1)}{N(M)^{s-1}} \sum_{\substack{(c, m)=1 \\ c \odd }} \frac{\varphi_K(c)}{N(c)^{4s-3}} \\
=&  \frac{\pi}{s-1}\leg {\gamma'}{\alpha'}_j  \psi(\alpha'^{-1})(-1)^{C(\alpha', \alpha'^{-1})} \frac{\zeta^{(m)}_K(4s-4)}{\zeta^{(m)}_K(4s-3)}\varphi_K(m_1)\prod_{\substack{ \varpi \odd \\ \varpi | m_1}}(N(\varpi)^{4s-4}-1)^{-1}.
\end{split}
\end{align}

We note that 
  \[ \varphi_K(m_1)\prod_{\substack{ \varpi \odd \\ \varpi | m_1}}(N(\varpi)^{4s-4}-1)^{-1}=1 \]
for $s=5/4$. It follows from the expressions given in \eqref{sumpsi10} and \eqref{sumpsi1} that the relation given in \eqref{Resphirel} holds regardless of which one of the conditions $\lambda \nmid \gamma'$ or $\lambda \nmid \alpha'$ holds. This completes the proof of the proposition.
\end{proof}

   We now deduce from \eqref{Resrel} by comparing the coefficients $\phi(s, \nu, \psi, m;k)$  of the Fourier expansions of $E_k(w, s, \psi, m)$ to infer from \eqref{EiFourier} by taking $N=16, 16m$ there that, for any primary $k$ and any $\nu \in \mathcal O^*_K$,
\begin{align}
\label{Fourierrel}
\begin{split}
  \res_{s=5/4}\phi(s, \nu, \psi, 1;k)=& \Big (\prod_{\substack{ \varpi \odd \\ \varpi | m}}(1+N(\varpi)^{-1})\Big ) \res_{s=5/4} \phi(s, m\nu, \psi, m;k).
\end{split}
\end{align}
  We now define for every primary, square-free $m$,
\begin{align}
\label{phi1def}
\begin{split}
  \phi_1(s, \nu, \psi, m) := \sum_{\substack{k \bmod {16} \\ k \equiv 1 \bmod 4}}\phi(s, \nu, \psi, m;k) \quad \mbox{and} \quad  \phi_2(s, \nu, \psi, m) := \sum_{\substack{k \bmod {16} \\ k \equiv 1+\lambda^3 \bmod 4}}\phi(s, \nu, \psi, m;k).
\end{split}
\end{align}
  We then apply \eqref{2.05} and \eqref{E12exp} to see that
\begin{align}
\label{psiiexp}
\begin{split}
 \phi_1(s, \nu, \psi, m) = \sum_{\substack{c \equiv 1 \bmod{4} \\ (c, m)=1}}\psi(c)\leg {m}{c}_jg_j(\nu, c)N(c)^{-s} \quad \mbox{and} \quad
 \phi_2(s, \nu, \psi, m) = -\sum_{\substack{c \equiv 1+\lambda^3 \bmod{4} \\ (c, m)=1}}\psi(c)\leg {m}{c}_jg_j(\nu, c)N(c)^{-s}.
\end{split}
\end{align}

 We readily deduce from \eqref{Fourierrel} and \eqref{phi1def} the following result that compares the residue at $s=5/4$ of $\phi_i$ for various $i, m$.
\begin{prop}
\label{lempsirelation} With the notation as above, let $\psi$ be any ray class character modulo $16$. Then for any $\nu \in \mathcal O^*_K$ and $i=1,2$, we have
\begin{align*}
%%\label{Fourierrelpsi}
\begin{split}
  \res_{s=5/4}\phi_i(s, \nu, \psi, 1)=& \Big (\prod_{\substack{ \varpi \odd \\ \varpi | m}}(1+N(\varpi)^{-1})\Big ) \res_{s=5/4} \phi_i(s, m\nu, \psi, m).
\end{split}
\end{align*}
\end{prop}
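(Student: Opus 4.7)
The plan is to obtain Proposition \ref{lempsirelation} as a direct corollary of the Fourier-coefficient relation \eqref{Fourierrel}, which has already been extracted from Proposition \ref{lemgammaeven} by matching Fourier expansions of the Eisenstein series $E_k(w,s,\psi,m)$ at the cusp $\infty$. The essential analytic content, namely the comparison of residues at $s=5/4$ across different levels $m$, is already encoded in \eqref{Fourierrel}. What remains is a bookkeeping step combining this pointwise (in $k$) identity with the definitions in \eqref{phi1def}.

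First, I would recall that \eqref{Fourierrel} gives, for every primary $k$ and every $\nu \in \mathcal O^*_K$,
\begin{align*}
\res_{s=5/4}\phi(s, \nu, \psi, 1;k)= \Big (\prod_{\substack{ \varpi \text{ primary} \\ \varpi | m}}(1+N(\varpi)^{-1})\Big ) \res_{s=5/4} \phi(s, m\nu, \psi, m;k).
\end{align*}
The crucial point is that the proportionality factor $\prod_{\varpi | m}(1+N(\varpi)^{-1})$ is independent of $k$. Second, I would appeal to the definitions
\begin{align*}
 \phi_1(s, \nu, \psi, m) = \sum_{\substack{k \bmod {16} \\ k \equiv 1 \bmod 4}}\phi(s, \nu, \psi, m;k), \qquad \phi_2(s, \nu, \psi, m) = \sum_{\substack{k \bmod {16} \\ k \equiv 1+\lambda^3 \bmod 4}}\phi(s, \nu, \psi, m;k),
\end{align*}
in which $\phi_i$ is a finite linear combination of the $\phi(s,\cdot,\psi,\cdot;k)$ over a fixed (independent of $m$) set of residue classes of primary $k$ modulo $16$.

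The conclusion then follows by summing \eqref{Fourierrel} over the appropriate residue classes of $k$ and using linearity of the residue functional. Since the set of $k$ being summed over is the same on both sides of the resulting identity (namely, primary $k$ with $k\equiv 1\pmod 4$ for $i=1$, and primary $k$ with $k\equiv 1+\lambda^3\pmod 4$ for $i=2$), and since the factor $\prod_{\varpi|m}(1+N(\varpi)^{-1})$ pulls out of the sum, one immediately arrives at the desired identity for $i=1,2$.

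There is no real obstacle here: all of the genuinely delicate work (the reciprocity manipulations in Lemma \ref{lemchieval}, the comparison of singular values at the essential cusps $\kappa_l$ in Proposition \ref{lemgammaeven}, and the passage from Eisenstein series to their Fourier coefficients giving \eqref{Fourierrel}) has already been carried out. Proposition \ref{lempsirelation} is simply the packaging of this information into the combinations $\phi_1$ and $\phi_2$ that will actually appear in the subsequent analysis of the Dirichlet series \eqref{twistedseries}.
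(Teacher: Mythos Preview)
Your proposal is correct and matches the paper's approach exactly: the paper simply states that the proposition is ``readily deduce[d] from \eqref{Fourierrel} and \eqref{phi1def}'', and you have spelled out precisely this deduction by summing the $k$-indexed identity \eqref{Fourierrel} over the relevant residue classes modulo $4$ and pulling out the $k$-independent factor.
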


  We now recast $\phi_i(s, \nu, \psi, 1)$ in a manner similar to the expressions given in \cite[(9.1), (9.2)]{Suz1} using the fact that $\psi$ is multiplicative and following the arguments from the bottom of \cite[p. 114]{Suz1} to the top of \cite[p. 114]{Suz1} to deduce from Proposition \ref{lempsirelation} that when $m$ is square-free, $\nu \in \mathcal O^*_K$ such that $(\nu, m)=1$, we have
\begin{align*}
%%\label{Fourierrelpsires}
\begin{split}
 \res_{s=5/4}  \phi_1(s, \nu, \psi, 1)  \prod_{\substack{ \varpi \odd \\ \varpi | m}}(1+N(\varpi)^{-1}) =& \sum_{\substack{m_1 |m \\ m_1 \equiv 1 \bmod 4}}\frac{\psi(m_1) g_j(\nu, m_1)}{N(m_1)^{5/4}} \res_{s=5/4}  \phi_1(s, m^2_1\nu, \psi, 1) \\
& \hspace*{1cm} +\sum_{\substack{m_1 |m \\ m_1 \equiv 1+\lambda^3 \bmod 4}} \frac{\psi(m_1) g_j(\nu, m_1)}{N(m_1)^{5/4}} \res_{s=5/4}  \phi_2(s, m^2_1\nu, \psi, 1) \\
\res_{s=5/4}  \phi_2(s, \nu, \psi, 1)  \prod_{\substack{ \varpi \odd \\ \varpi | m}}(1+N(\varpi)^{-1})   =& \sum_{\substack{m_1 |m \\ m_1 \equiv 1 \bmod 4}}\frac{\psi(m_1) g_j(\nu, m_1)}{N(m_1)^{5/4}} \res_{s=5/4}  \phi_2(s, m^2_1\nu, \psi, 1) \\
&\hspace*{1cm} -\sum_{\substack{m_1 |m \\ m_1 \equiv 1+\lambda^3 \bmod 4}} \frac{\psi(m_1) g_j(\nu, m_1)}{N(m_1)^{5/4}} \res_{s=5/4}  \phi_1(s, m^2_1\nu, \psi, 1).
\end{split}
\end{align*}

  We induce on the number of primary prime divisors of $m$ to arrive at relations between the residues at $s=5/4$ of $\phi_1, \phi_2$ when $\nu$ is replaced by $\nu m^2$ for a square-free $m$.
\begin{prop}
\label{resrelation} With the notation as above and let $\psi$ be any ray class character modulo $16$. Then for any square-free $m \in \mathcal O_K$ and any $\nu \in \mathcal O^*_K$ satisfying $(\nu, m)=1$, we have
\begin{align*}
%%\label{ressquarerel}
\begin{split}
  \res_{s=5/4}\phi_1(s, m^2\nu, \psi, 1)=&\overline{\psi(m)g_j(\nu, m)}N(m)^{-3/4}\res_{s=5/4}\phi_1(s, \nu, \psi, 1), \quad m \equiv 1 \pmod 4, \\
  \res_{s=5/4}\phi_2(s, m^2\nu, \psi, 1)=&\overline{\psi(m)g_j(\nu, m)}N(m)^{-3/4}\res_{s=5/4}\phi_2(s, \nu, \psi, 1), \quad m \equiv 1 \pmod 4, \\
  \res_{s=5/4}\phi_1(s, m^2\nu, \psi, 1)=&-\overline{\psi(m)g_j(\nu, m)}N(m)^{-3/4}\res_{s=5/4}\phi_2(s, \nu, \psi, 1), \quad m \equiv 1+\lambda^3 \pmod 4, \\
  \res_{s=5/4}\phi_2(s, m^2\nu, \psi, 1)=&\overline{\psi(m)g_j(\nu, m)}N(m)^{-3/4}\res_{s=5/4}\phi_1(s, \nu, \psi, 1), \quad m \equiv 1+\lambda^3 \pmod 4.
\end{split}
\end{align*}
\end{prop}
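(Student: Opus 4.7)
The plan is to prove Proposition \ref{resrelation} by induction on $\omega(m)$, the number of primary prime divisors of the square-free element $m$, using as input the two displayed identities immediately preceding the proposition (which come from Proposition \ref{lempsirelation} together with the multiplicative structure of $\psi$ and of $g_j(\nu,\cdot)$). Those identities express $\res_{s=5/4}\phi_i(s,\nu,\psi,1)\prod_{\varpi\mid m}(1+N(\varpi)^{-1})$ as a sum over divisors $m_1\mid m$ of expressions involving $\res_{s=5/4}\phi_j(s,m_1^2\nu,\psi,1)$. The strategy is to isolate the $m_1=m$ summand as the unknown, move all other summands to the other side, and then substitute the formulas from the inductive hypothesis.

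For the base case $\omega(m)=1$, write $m=\varpi$ and note that there are only two divisors, $1$ and $\varpi$. If $\varpi\equiv 1\pmod 4$, the first displayed identity reads
\begin{equation*}
\res\phi_1(s,\nu,\psi,1)\bigl(1+N(\varpi)^{-1}\bigr) = \res\phi_1(s,\nu,\psi,1) + \frac{\psi(\varpi)g_j(\nu,\varpi)}{N(\varpi)^{5/4}}\res\phi_1(s,\varpi^2\nu,\psi,1),
\end{equation*}
and an analogous identity holds for $\phi_2$. Solving for the unknown residue and invoking the two elementary facts $|g_j(\nu,\varpi)|^2=N(\varpi)$ (valid when $(\nu,\varpi)=1$, from \eqref{gest} and the standard absolute value computation for Gauss sums modulo a prime) and $\psi(\varpi)^{-1}=\overline{\psi(\varpi)}$, one obtains the claimed factor $\overline{\psi(\varpi)g_j(\nu,\varpi)}N(\varpi)^{-3/4}$. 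The case $\varpi\equiv 1+\lambda^3\pmod 4$ is handled identically, except that the identity now couples $\phi_1$ on the left to $\phi_2$ on the right (and vice versa), producing the $\phi_1\leftrightarrow\phi_2$ swap together with the minus sign in the third formula.

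For the induction step, suppose the four claimed formulas hold for every square-free element with fewer than $\omega(m)$ primary prime divisors. Apply the displayed identity to $m$ and single out the $m_1=m$ contribution on the right; for every proper divisor $m_1\mid m$, use \eqref{rel3} (combined with the congruence $m_1\equiv m_1'\cdot (m/m_1')\pmod{\lambda^3}$ interpretation of primary representatives) to factor $g_j(\nu,m_1)g_j(\nu,m/m_1) = (-1)^{C(m/m_1,m_1)}g_j(\nu,m_1(m/m_1))\cdot(\text{correction})$, and insert the inductive formula for $\res\phi_i(s,m_1^2\nu,\psi,1)$. After this substitution, the sum over proper divisors $m_1\mid m$ collapses: each summand contributes precisely $\res\phi_i(s,\nu,\psi,1)\cdot \prod_{\varpi\mid m_1}N(\varpi)^{-1}$ (possibly swapped to $\phi_{3-i}$ with a sign, according to the residue class of $m/m_1$), and the telescoping combinatorial identity
\begin{equation*}
\prod_{\varpi\mid m}(1+N(\varpi)^{-1}) - \sum_{\substack{m_1\mid m \\ m_1\ne m}}\prod_{\varpi\mid m_1}N(\varpi)^{-1} = \prod_{\varpi\mid m}N(\varpi)^{-1}
\end{equation*}
(used with the appropriate $\mathbb Z/2$-grading) leaves exactly the single term $\prod_{\varpi\mid m}N(\varpi)^{-1}\res\phi_i(s,\nu,\psi,1)$ (or $\res\phi_{3-i}$, with a sign, if $m\equiv 1+\lambda^3\pmod 4$). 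Solving for $\res\phi_i(s,m^2\nu,\psi,1)$ and using $|g_j(\nu,m)|^2=N(m)$ together with $\overline{\psi(m)}=\psi(m)^{-1}$ yields the claimed formula.

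The main obstacle is the bookkeeping of signs in the induction. The parity functions $C(m_1,m_2)$ arising from the multiplicativity relation \eqref{rel3} for $g_j(\nu,\cdot)$, combined with the $(-1)$ already present in $\phi_2$'s definition \eqref{phi1def}--\eqref{psiiexp} and the $\phi_1\leftrightarrow\phi_2$ swap when a prime of class $1+\lambda^3\pmod 4$ is introduced, must together respect a consistent $\mathbb Z/2$-grading keyed to $(N(m)-1)/4\pmod 2$. I would verify this grading first on the two-prime cases (both primes $\equiv 1$, both $\equiv 1+\lambda^3$, and the mixed case), which already exhibit every phenomenon needed, and then record the general induction as a uniform statement in the four cases simultaneously so as not to juggle sign cases independently.
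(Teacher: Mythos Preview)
Your approach---induction on the number of primary prime divisors of $m$, using the two displayed identities immediately preceding the proposition---is exactly what the paper does (the paper records only the one-line instruction ``We induce on the number of primary prime divisors of $m$''), and your base case is carried out correctly. The induction step is in fact cleaner than you anticipate: once you insert the inductive formula for $\res_{s=5/4}\phi_i(s,m_1^2\nu,\psi,1)$ into the $m_1$-summand, the product $\psi(m_1)g_j(\nu,m_1)\cdot\overline{\psi(m_1)g_j(\nu,m_1)}=|g_j(\nu,m_1)|^2=N(m_1)$ collapses every proper-divisor term to $N(m_1)^{-1}\res_{s=5/4}\phi_i(s,\nu,\psi,1)$ with the \emph{same} index $i$ (the swaps and the minus sign in the four inductive formulas are arranged precisely so that this happens in both residue classes), so there is no need to invoke \eqref{rel3} or to track a $\mathbb Z/2$-grading; the identity $\prod_{\varpi\mid m}(1+N(\varpi)^{-1})=\sum_{m_1\mid m}N(m_1)^{-1}$ then isolates the $m_1=m$ term at once.
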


\section{Proof of Proposition \ref{prop: Fbound}}
\label{section6}

\subsection{Analytic behavior of Dirichlet series associated with quartic Gauss sums}
\label{section: smooth Gauss}
   The proof of Proposition \ref{prop: Fbound} demands the establishment of the analytical properties of certain Dirichlet series associated with quartic Gauss sums. To do so, we begin by introducing some notations. Define
\begin{align}
\label{hdef}
h_a (r, s, \psi) = \su{b \odd \\ a\mid b, \; (b,r)=1}  \psi(b) g_{K,j}(r,b)  N(b)^{-s}.
\end{align}

   We also set for $(a,b)=1$,
\begin{align*}
   \rho_a(b)=(-1)^{C(a,b)},
\end{align*}
   where $C(\cdot,\cdot)$ is given in \eqref{Cdef}. It is easy to see that $\psi_a(c)$ is a ray class character modulo $16$. \newline

   For any square-free, non-unit primary $a \in \mathcal O_K$, let $\{\varpi_1, \cdots, \varpi_k \}$ be the set of distinct primary prime divisors of $a$.  We define
\begin{align*}
  P(a)=\prod^{k}_{i=1}\overline{\leg{(a/\prod^{i-1}_{l=1}\varpi_l)^2}{\varpi^3_i}}_j,
\end{align*}
   where, as per convention, the empty product is $1$. As $\leg{\varpi^2_2}{\varpi^3_1}_4=\leg{\varpi_2}{\varpi_1}^2_4$ for two distinct primes $\varpi_1, \varpi_2$, one checks easily by induction on the number of prime divisors of $a$ that $P(a)$ is independent of the order of $\{\varpi_1, \cdots, \varpi_k \}$. \newline

  Our first lemma is taken from \cite[Lemma 2.4]{G&Zhao1} that establishes relations between various Dirichlet series needed in our study.
\begin{lemma}
\label{lem2} Suppose $r, \alpha$ are primary with $\alpha$ square-free.  For any ray class character $\psi$ modulo $16$, we set
\begin{equation*}
  h(r,s,\psi; \alpha)=\sum_{\substack{ n \odd \\(n,\alpha)=1}}\frac {\psi(n)g_{K,j}(r,n)}{N(n)^s}.
\end{equation*}
Moreover, write $r =r_1r^2_2r^3_3r^4_4$ with $r_i, 1 \leq i \leq 4$ being primary and $r_1r_2r_3$ being square-free.  Let
\begin{equation*} 
%%\label{hstardef}
 h^*(r_1r_2^2r^3_3,s,\psi;r_1)= \sum_{\substack{a \odd \\ a|r_2}}\mu_{K}(a)\psi(a)^3N(a)^{2-3s}\overline{\leg{-r_1(r_2/a)^2r^3_3}{a^3}}_4 P(a) \left( \prod_{\varpi| a}\overline{g}_{K,j}(\varpi) \right)    h(r_1r_2^2r^3_3/a^2,s, \rho_{a^3}\psi; r_1),
\end{equation*}
  where the empty product is $1$.  Then
\begin{align}
\label{2.13}
  h(r_1r^2_2r^3_3,s,\psi;r_1r_2r_3) &=\prod_{\varpi|r_3}(1-\psi(\varpi)^4N(\varpi)^{3-4s})^{-1}h(r_1r_2^2r^3_3,s,\psi; r_1r_2), \\
\label{2.14}
  h(r_1r_2^2r^3_3,s,\psi; r_1r_2) =& \prod_{\varpi|r_2} \left(1-\rho_{\varpi^3}(\varpi)\psi(\varpi)^4N(\varpi)^{2-4s}|g_{K,j}(\varpi)|^2\overline{\leg{-1}{\varpi^3}}_j \right )^{-1}  h^*(r_1r_2^2r^3_3,s,\psi;r_1),  \quad \mbox{and} \\
\label{2.15}
  h(r_1r_2^2r^3_3,s,\psi;r_1) &=\prod_{\varpi|r_1} \left( 1+\psi(\varpi)^2N(\varpi)^{1-2s}g_{K,2}(\varpi)\overline{\leg{r_1r_2^2r^3_3/\varpi}{\varpi^2}}_j \right)^{-1}h(r_1r_2^2r^3_3,s,\psi;1).
\end{align}
\end{lemma}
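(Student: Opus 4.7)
I would analyze the local Dirichlet factor at each prime dividing $r$, using the multiplicativity of $g_{K,4}$ from \eqref{rel2}--\eqref{rel3} together with the explicit evaluations of $g_{K,4}(\varpi^k,\varpi^\ell)$ in \eqref{rel4}. The decisive observation to exploit is that when $\varpi\mid r_i$ and $\varpi\nmid r_1r_2r_3/r_i$, so that locally $\varpi^i\|r$, the Gauss sum $g_{K,4}(\varpi^i,\varpi^\ell)$ vanishes for all but at most one value of $\ell\ge 1$: namely $\ell=2$ for $i=1$, $\ell=3$ for $i=2$, and $\ell=4$ for $i=3$. This will collapse each local Euler factor to a two-term expression.

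\textbf{Strategy for \eqref{2.15} and \eqref{2.13}.} For $\varpi\mid r_1$ I would use \eqref{rel1} to strip the coprime-to-$\varpi$ part from $r$ and apply \eqref{rel4} at $\ell=2$, together with $\leg{\cdot}{\varpi}_j^2=\leg{\cdot}{\varpi}_2$, to obtain the local factor
$1+\psi(\varpi)^2 N(\varpi)^{1-2s}g_{K,2}(\varpi)\overline{\leg{r/\varpi}{\varpi^2}_j}$; taking reciprocals (to pass from the sum over $(n,r_1)=1$ to the unrestricted sum at those primes) should give \eqref{2.15}. For $\varpi\mid r_3$ the only nonzero $\ell\ge 1$ is $\ell=4$, at which the quartic character $\overline{\leg{\cdot}{\varpi^4}_j}$ and the reciprocity sign $(-1)^{C(\cdot,\varpi^4)}$ are both trivial (they are fourth powers); this should yield the clean local factor $1-\psi(\varpi)^4 N(\varpi)^{3-4s}$, and hence \eqref{2.13}.

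\textbf{Strategy for \eqref{2.14}.} For $\varpi\mid r_2$ the only contributing $\ell\ge 1$ is $\ell=3$, so every $n$ counted in $h(r,s,\psi;r_1)$ factors uniquely as $n=a^3m$ where $a\mid r_2$ is the square-free primary product of those $\varpi\mid r_2$ with $\varpi^3\mid n$, and $(m,r_1r_2)=(m,a)=1$. I would apply \eqref{rel3} and then \eqref{rel1} to write
\[
g_{K,4}(r,a^3m)=(-1)^{C(m,a^3)}\overline{\leg{m^2}{a^3}_j}\,g_{K,4}(r,a^3)\,g_{K,4}(r,m),
\]
and iterate \eqref{rel2} to decompose $g_{K,4}(r,a^3)$ into local factors $g_{K,4}(\varpi^2,\varpi^3)$; after evaluating these via \eqref{rel4} and collecting the quartic symbols contributed by the reciprocity in \eqref{rel2} and by \eqref{rel1} at each prime $\varpi\mid a$, the product should exactly match $N(a)^2\overline{\leg{-r_1(r_2/a)^2r_3^3}{a^3}_j}P(a)\prod_{\varpi\mid a}\overline{g_{K,4}(\varpi)}$, while the signs $(-1)^{C(m,a^3)}$ contribute the twist by $\rho_{a^3}$ on the inner series in $m$. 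Finally, I would apply M\"obius inversion on the divisors of $r_2/a$ to relax the inner coprimality condition from $(m,r_1r_2)=1$ to $(m,r_1)=1$ (which is what appears in $h^*$); this introduces $\mu_K(a)$, and the discrepancy with the left-hand side of \eqref{2.14} reassembles into the advertised Euler product $\prod_{\varpi\mid r_2}(1-\rho_{\varpi^3}(\varpi)\psi(\varpi)^4N(\varpi)^{2-4s}|g_{K,j}(\varpi)|^2\overline{\leg{-1}{\varpi^3}_j})^{-1}$.

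\textbf{Main obstacle.} The derivation of \eqref{2.14} will be the technical heart: it demands meticulous tracking of the quartic reciprocity signs from every invocation of \eqref{rel2}--\eqref{rel3} and of the resulting quartic characters, and showing that they aggregate exactly into the expression $P(a)\,\overline{\leg{-r_1(r_2/a)^2r_3^3}{a^3}_j}$ appearing in the definition of $h^*$. The well-definedness of $P(a)$, i.e. its independence of the ordering of the primes dividing $a$, is itself a manifestation of this consistency and must be verified along the way.
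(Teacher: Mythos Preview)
The paper does not supply a proof of this lemma; it simply quotes it from \cite[Lemma~2.4]{G&Zhao1}. So there is no in-paper argument to compare your proposal against. That said, your plan is the standard and correct route to such identities: factor $n$ according to the primes of $r_1,r_2,r_3$, use \eqref{rel1}--\eqref{rel3} to separate the local pieces, and evaluate each via \eqref{rel4}. Your analysis of which exponents $\ell$ survive at each type of prime is right, and the derivations you sketch for \eqref{2.13} and \eqref{2.15} go through essentially as written (the appearance of $g_{K,2}(\varpi)$ rather than $g_{K,4}(\varpi)$ in \eqref{2.15} comes from the fact that $\chi_{4,\varpi^2}=\chi_{2,\varpi}$ on $(\mathcal O_K/\varpi^2)^\times$, which your remark about $\leg{\cdot}{\varpi}_j^2=\leg{\cdot}{\varpi}_2$ already points toward).

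For \eqref{2.14} your overall shape is correct, but the ``M\"obius inversion to relax $(m,r_1r_2)=1$ to $(m,r_1)=1$'' step is not quite how the identity assembles. After decomposing $n=a^3m$ with $(m,r_1r_2)=1$, you land on sums of the form $h(r,s,\rho_{a^3}\psi\cdot\chi;r_1r_2)$, whereas $h^*$ is built from $h(r/a^2,s,\rho_{a^3}\psi;r_1)$; the passage between the two involves both the change $r\mapsto r/a^2$ (which via \eqref{rel1} absorbs a quadratic twist) and a further local expansion at the primes of $r_2/a$. The $\mu_K(a)$ in $h^*$ is part of the \emph{definition}, not something introduced by inversion; the identity \eqref{2.14} is then verified by expanding $h^*$ and checking that the resulting double sum collapses, prime by prime over $r_2$, to the stated Euler product times $h(\cdot;r_1r_2)$. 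This is exactly the ``meticulous tracking'' you flag as the main obstacle, and your plan will succeed once this bookkeeping is done.
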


  Our next lemma, an analogue to \cite[(20)]{H}, estimates the second moment of $h(r,s, \psi; 1)$ integrated along on the right of the line $\Re(s)=1$. 
\begin{lemma}
\label{lem:from-HB}
  Let $r=r_1r^2_2r^3_3$ such that $r_i, 1 \leq i \leq 3$ are primary, square-free and mutually co-prime and $\psi$ be any ray class character modulo $16$. We have for any $\varepsilon>0$, 
\begin{align}
\label{Integralbound}
\begin{split}
\int\limits_{-T}^{T} \left\vert h(r,1+\varepsilon+it, \psi; 1) \right\vert^2 \dif t \ll T^3 N(r)^{1/2+\varepsilon}.
\end{split}
\end{align}
\end{lemma}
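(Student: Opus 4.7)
The approach, mirroring the strategy of Heath-Brown \cite{H}, uses a smooth dyadic decomposition combined with the mean value theorem for Dirichlet polynomials over $\mathcal{O}_K$. Let $\omega$ be a smooth function supported in $[1/2,2]$ realizing a partition of unity $\sum_{k\in\mz} \omega(\cdot/2^k) \equiv 1$ on $(0,\infty)$. The $\mathcal{O}_K$-analogue of the second-moment mean value theorem for Dirichlet polynomials gives, for the dyadic block
\[
P_Y(s) := \sum_{n \text{ primary}} \psi(n) g_{K,j}(r,n) \omega(N(n)/Y) N(n)^{-s},
\]
the estimate
\[
\int_{-T}^T |P_Y(1+\varepsilon+it)|^2 \, \dif t \ll (T+Y)^{1+\varepsilon} \sum_{N(n) \asymp Y} \frac{|g_{K,j}(r,n)|^2}{N(n)^{2+2\varepsilon}},
\]
the extra $(TY)^{\varepsilon}$ absorbing the multiplicities of norms in $\mathcal O_K$. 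Using the Gauss sum factorization \eqref{rel2} and the explicit evaluations in \eqref{rel4}, together with the bound $|g_{K,j}(r,n)| \leq N(n)^{1/2}$ with equality for primary square-free $n$ coprime to $r$, the inner sum is $\ll N(r)^{\varepsilon}\log Y$, so that the dyadic block contributes $(T+Y)^{1+\varepsilon} N(r)^{\varepsilon}$ to the $L^2$-integral.

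To recover the full Dirichlet series, I will sum the dyadic contributions up to a cutoff $Y_0$ and treat the remainder $Y > Y_0$ by shifting the Mellin contour representing $h(r,s,\psi;1)$ to the right of $\Re(s) = 3/2$, where absolute convergence holds. The contour shift crosses the possible simple pole at $s = 5/4$ given by Patterson's analytical continuation \cite[Lemma, p.~200]{P}; the $r$-dependence of its residue is controlled by Proposition~\ref{resrelation}, where the relevant factor $g_j(\cdot, r_2)$ is attached to the square-part $r_2^2$ in the decomposition $r = r_1 r_2^2 r_3^3$. Applying Cauchy--Schwarz across the $O(\log (TN(r)))$ dyadic blocks and choosing $Y_0 \asymp T^2 N(r)^{1/2}$ then balances the mean-value contribution against the residue-driven tail to produce the final bound $T^3 N(r)^{1/2+\varepsilon}$.

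\textbf{Main obstacle.} The critical difficulty is that the line $\Re(s) = 1+\varepsilon$ sits inside the critical strip and to the \emph{left} of the pole at $s = 5/4$, so the tail of the Mellin representation cannot be bounded by absolute convergence alone. Extracting the correct $N(r)^{1/2+\varepsilon}$-dependence therefore requires a precise analysis of the residue at $s=5/4$, for which the decomposition $r = r_1 r_2^2 r_3^3$ and Proposition~\ref{resrelation} are essential: the $r_2^2$-aspect, via the appearance of $g_j(\cdot, r_2)$ in the residue, is what ultimately produces the dominant exponent in the final bound.
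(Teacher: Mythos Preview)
Your plan has a genuine gap in the treatment of the tail. After truncating at $Y_0$ and applying the mean value theorem for Dirichlet polynomials (which does give $(T+Y_0)N(r)^{\varepsilon}$, as you say), you propose to handle the remainder by ``shifting the Mellin contour representing $h(r,s,\psi;1)$ to the right of $\Re(s)=3/2$'' and picking up the residue at $5/4$. But there is no valid contour from which to start: the tail $\sum_{N(n)>Y_0}\psi(n)g_{K,j}(r,n)N(n)^{-s}$ is not absolutely convergent at $\Re(s)=1+\varepsilon$, and any Mellin representation of the form $\tfrac{1}{2\pi i}\int_{(c)}h(r,s+w,\psi;1)\,\widehat{W}(w)\,Y_0^{w}\,\dif w$ forces either $c<0$ (where the integrand involves $h$ on $\Re(s+w)<1+\varepsilon$, which is exactly what you are trying to bound) or $c>1/2-\varepsilon$ (where $\widehat W$ is undefined because $W=1-V$ is not integrable at infinity). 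Even if you invoke Patterson's convexity bound $h(r,\alpha+it)\ll N(r)^{(3/2-\alpha)/2+\varepsilon}(1+t^2)^{3(3/2-\alpha)/2+\varepsilon}$ on a line slightly left of $1+\varepsilon$, the resulting contribution to $\int_{-T}^{T}|\,\cdot\,|^2\,\dif t$ is of size $T^{4}N(r)^{1/2+\varepsilon}$, one power of $T$ too large. Your proposed balancing at $Y_0\asymp T^2N(r)^{1/2}$ accounts only for the mean-value term and the residue term (both of which are in fact $\le T^2 N(r)^{1/2+\varepsilon}$), and ignores this dominant shifted-integral term.

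The paper closes this gap by using the functional equation \eqref{funceqn} for $Z(r,s;\psi)=\zeta_K(4s-3)h(r,s,\psi;1)$ in a recursive way, following Heath-Brown: one writes $Z$ as a smoothed sum $\sum a_n N(n)^{-s}e^{-N(n)/X}$ plus a contour integral of $Z(r,s+w;\psi)$ on $\Re(w)=2-2\sigma$, so that $\Re(s+w)=2-\sigma$; the functional equation then relates $|Z(r,2-\sigma+i\cdot)|^2$ back to $|Z(r,\sigma+i\cdot)|^2$ with a gain of $T^{12\sigma-12}N(r)^{2\sigma-2}X^{4-4\sigma}$. Summing over the finitely many $\psi$ produces a self-referential inequality $\widetilde I(\sigma)\ll X^{4-4\sigma}T^{12\sigma-12}N(r)^{2\sigma-2}\widetilde I(\sigma)+T+X$, and the choice $X\asymp T^{3}N(r)^{1/2}$ makes the coefficient of $\widetilde I(\sigma)$ less than $1/2$, yielding $\widetilde I(\sigma)\ll T^{3}N(r)^{1/2+\varepsilon}$. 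The functional equation, not just the pole at $5/4$, is the essential ingredient you are missing.
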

\begin{proof}
  Our proof follows from the treatments in the proof of \cite[Lemma 3]{H}.  For $s = \sigma + it$ with $\sigma, t \in R$, we define
\begin{align}
\label{Zdef}
\begin{split}
 Z(r,s; \psi) := & \zeta_K(4s-3)h(r,s, \psi;1),  \quad \Gamma_{\mathbb{C}}(s):= 2 \cdot (2 \pi)^{-s} \Gamma(s), \\
G_{\infty}(s):=&\Gamma_{\mathbb{C}} \Big(s-\frac{3}{4} \Big) \Gamma_{\mathbb{C}} \Big(s-\frac{1}{2} \Big) \Gamma_{\mathbb{C}} \Big(s-\frac{1}{4} \Big) \quad \mbox{and} \quad F(r,s; \psi) := G_{\infty}(s)Z(r,s;\psi). 
\end{split}
\end{align}  

  It is shown in \cite[Theorem 1]{BrubaderBump} that $F(r,s; \psi)$ is holomorphic, except for possible simple poles at $s = 2/3, 5/4$. Moreover, it satisfies 
a functional equation (see \cite[(4.27)]{DDHL}, \cite[Theorem 1]{BrubaderBump}) given by
\begin{equation} 
\label{funceqn}
 F(r,s; \psi) =N(r)^{1-s}  
\cdot \sum_{j} A_{j}(2^{-s})  F(r,2-s; \overline \psi),
\end{equation}
where $j$ runs over a finite number of indexes and $A_{j}(2^{-z})$ is a rational function in $2^{-z}$ with coefficients in $\mathbb{C}$ such that $A_{j}(2^{-z})$ are holomorphic for $\Re(z) \neq 1$, $5/4$. \newline 

 We further define
\begin{align} 
\label{wideZ}
\begin{split}
 \widetilde Z(r,s; \psi)  := \sum_{\psi}|Z(r, s; \overline \psi)|^2 \quad \mbox{and} \quad \widetilde F(r,s; \chi) :=  G_{\infty}(s)^2 \widetilde Z(r,s; \psi).
\end{split}
\end{align}
From \eqref{funceqn}, we deduce that
\begin{align} 
\label{Fbound}
 \widetilde F(r,s; \psi) \ll N(r)^{2-2\sigma}\Big |\widetilde F(r,2-s; \psi)\Big |, \quad \frac 34 < \sigma < \frac 54. 
\end{align}

  Note that by \cite[(9.20)]{DDHL} implies that for $1 \leq T \leq t \leq 2T$, 
\begin{align} 
\label{Gbound}
G_{\infty}(2-s) \ll  T^{3(2-2\sigma)} |G_{\infty}(s)|. 
\end{align}  
 
  It follows from \eqref{Fbound} and \eqref{Gbound} that
\begin{align} 
\label{wideZbound}
 \Big |\widetilde Z(r,s; \psi)\Big | \ll T^{6(2-2\sigma)}N(r)^{2-2\sigma}\Big |\widetilde Z(r,s; \overline \psi)\Big |. 
\end{align}  
 
  We now write for $\sigma$ large enough, 
\begin{align} 
\label{Zseries}
Z(r,s; \psi) =\sum_{\substack{n \odd}}a_n(r,\psi)N(n)^{-s}. 
\end{align}  
   
   We apply the Mellin transform to get that for $1 < \sigma \leq 5/4$, 
\begin{align*} 
%%\label{wideZ}
 \sum_{\substack{n \odd}} \frac{a_n(r,\psi)}{N(n)^s} e^{-N(n)/X}=\frac 1{2\pi i}\int\limits_{(2)}Z(r,s+w; \psi)X^w\Gamma(w) \dif w. 
\end{align*}     
  We shift the line of integration to $\Re(w) =2-2\sigma$ to encounter a simple pole at $w = 0$ and a possible simple pole at
  $w = 5/4-s$.  This leads to 
\begin{align*} 
%%\label{Zrel}
\begin{split}
 \sum_{\substack{n \odd}} & \frac{a_n(r,\psi)}{N(n)^s} e^{-N(n)/X} \\
=&\frac 1{2\pi i}\int\limits_{(2-2\sigma)}Z(r,s+w; \psi)X^w\Gamma(w) \dif w  +\res_{w=0}Z(r,s+w; \psi)X^w\Gamma(w)+\res_{w=5/4-s}Z(r,s+w; \psi)X^w\Gamma(w) \\
=&\frac 1{2\pi i}\int\limits_{(2-2\sigma)}Z(r,s+w; \psi)X^w\Gamma(w)\dif w  + Z(r,s; \psi)+O(N(r)^{1/8+\varepsilon}X^{5/4-\sigma}e^{-|t|}), 
\end{split}
\end{align*} 
  where the last estimation above follows from Lemma on p. 200 of \cite{P} (see also \eqref{res54} below) and \eqref{Stirlingratio}. \newline

  We apply \eqref{Zdef} to see that
\begin{align} 
\label{Iexp}
 I(\sigma) :=\int\limits^{2T}_T\Big |Z(r,s;\psi)\Big |^2 \dif t \ll I_1+I_2+N(r)^{1/4+\varepsilon}X^{5/2-2\sigma}e^{-T}, 
\end{align}   
 where
\begin{align} 
\label{I1I2}
\begin{split}
 I_1= \int\limits^{2T}_T\Big |\sum_{\substack{n \odd}}a_n(r, \psi)N(n)^{-\sigma-it}e^{-N(n)/X} \Big |^2 \dif t, \quad \mbox{and} \quad  I_2= \int\limits^{2T}_T\Big |\int\limits_{(2-2\sigma)}Z(r,\sigma+it+w; \psi)X^w\Gamma(w)dw \Big |^2 \dif t.
\end{split}
\end{align}   

  Using the estimation $N(n)/X e^{-2N(n)/X} \ll 1$,  we deduce from a mean-value result of H. L. Montgomery and R. C. Vaughan \cite[Corollary 3]{MV74}  that
\begin{align} 
\label{I1bound}
\begin{split}
  I_1 \ll & \sum_{\substack{n \odd}}(T+N(n))|a_n(r, \psi)|^2N(n)^{-2\sigma}e^{-2N(n)/X} \ll  (T+X)\sum_{\substack{n \odd}}|a_n(r, \psi)|^2N(n)^{-2\sigma}. 
\end{split}
\end{align} 

Now, from \eqref{rel2}, \eqref{Zdef} and \eqref{Zseries} that $a_n(r, \psi)$ is multiplicative such that for any primary $n$, upon writing 
  $n=\prod_{\substack{\varpi |n \\ \varpi \odd}}\varpi^f$ with $f \geq 1$, we have
\begin{align*} 
%%\label{wideZ}
\begin{split}
  |a_n(r, \psi)|=\prod_{\substack{\varpi |n \\ \varpi \odd}}|a_{\varpi^f}(r, \psi)|, \quad \mbox{with} \quad a_{\varpi^f}(r, \psi) =\sum_{\substack{4g+e =f \\ m, k \geq 0}}g(r,\varpi^e)N(\varpi^{3g}). 
\end{split}
\end{align*}

We now deduce by \eqref{rel4} that if $r=r_1r^2_2r^3_3$ such that $r_i, 1 \leq i \leq 3$, are square-free and mutually co-prime, then $g_j(r, \pi^e) \ll N(\varpi)^{a(e)}$ with $a(0)=0$, $a(1)=1/2$, $a(2)=3/2$, $a(3)=5/2$, $a(4)=3$ and that $g_j(r, \pi^e)=0$ for $e \geq 5$. Thus, we may assume that $0 \leq e \leq 4$.  If $1 \leq f=e+4g$ in this case, $|a_{\varpi^f}(r, \psi)| \leq N(\varpi)^{a(e)+3g} \leq N(\varpi)^{f-1/2}$. As there are at two different ways to write a given $f \geq 1$ in the form $e+4g$ above, we deduce that uniformly for $\sigma>1+\varepsilon$ with $\varepsilon>0$, 
\begin{align*} 
%%\label{wideZ}
\begin{split}
  \sum_{\substack{n \odd}}|a_n(r, \psi)|^2N(n)^{-2\sigma} \leq \prod_{\varpi \odd}(1+2N(\varpi)^{1-2\sigma}+2N(\varpi)^{3-4\sigma}+N(\varpi)^{5-6\sigma}+\cdots) \ll 1.
\end{split}
\end{align*}     
Thus \eqref{I1bound} becomes
\begin{align} 
\label{I1est}
\begin{split}
  I_1 \ll_{\varepsilon} T+X. 
\end{split}
\end{align} 

  Next, \eqref{Stirlingratio} and Cauchy's inequality give
\begin{align} 
\label{Zsquarebound}
\begin{split}
 \Big |\int\limits_{(2-2\sigma)}Z(r,\sigma+it+w; \psi)X^w\Gamma(w) \dif w \Big |^2 \leq X^{4-4\sigma}I'I'', 
\end{split}
\end{align}   
  where
\begin{align} 
%%\label{wideZ}
\begin{split}
 I'=  \int\limits_{(2-2\sigma)}|\Gamma(w) \dif w| \ll_{\varepsilon} 1 \quad \mbox{and} \quad
 I''= \int\limits_{(2-2\sigma)}|Z(r,\sigma+it+w; \psi)|^2|\Gamma(w)| \dif w \ll_{\varepsilon} \int\limits^{\infty}_{-\infty}|Z(r,2-\sigma+iy; \psi)|^2e^{-|t-y|} \dif y.
\end{split}
\end{align}   

 Note that we have
\begin{align} 
\label{ebound}
\begin{split}
 \int\limits^{2T}_Te^{-|t-y|} \dif y \ll
\begin{cases}
 1, & y \in [T, 2T] \\
 e^{2T-y}, & y \geq 2T \\
 e^{y-T}, & y \leq T. 
\end{cases}
\end{split}
\end{align}   
 
  It follows from \eqref{I1I2}, \eqref{Zsquarebound}--\eqref{ebound} that
\begin{align} 
\label{I2est}
\begin{split}
 I_2 \ll_{\varepsilon}  X^{4-4\sigma}\int\limits^{2T}_T|Z(r,2-\sigma+iy; \psi)|^2 & \dif y +X^{4-4\sigma}\int\limits^{\infty}_{2T}|Z(r,2-\sigma+iy; \psi)|^2e^{2T-y} \dif y \\
 & +X^{4-4\sigma}\int\limits^T_{-\infty}|Z(r,2-\sigma+iy; \psi)|^2e^{y-T} \dif y. 
\end{split}
\end{align}   
 
The estimate from the proof of Lemma on p. 200 of \cite{P} renders that, for $3/4 <\alpha \leq 3/2$, $\alpha \neq 1$, 
\begin{align} 
\label{Zest}
\begin{split}
 Z(r,\alpha+iy; \psi) \ll N(r)^{(3/2+\varepsilon-\alpha)/2}(1+t^2)^{3(3/2+\varepsilon-\alpha)/2}. 
\end{split}
\end{align}   
 Now that the proof of Lemma on p. 200 of \cite{P} only establishes the above in the range $1 <\alpha \leq 3/2$. However, one applies the functional equation 
given in \eqref{funceqn} and \eqref{Gbound}, together with the observation that the functions $A_{j}(2^{-z})$ appearing in \eqref{fneqnL} are holomorphic for $\Re(z) \neq 1$, $5/4$, to infer that the above estimation continues to hold for $3/4<\alpha$. \newline

  Applying the estimation in \eqref{Zest} to \eqref{I2est} allows us to see that
\begin{align} 
\label{I2est1}
\begin{split}
 I_2 \ll_{\varepsilon} & X^{4-4\sigma}\Big (\int\limits^{2T}_T|Z(r,2-\sigma+iy; \psi)|^2 \dif y+N(r)^{\sigma-1/2+\varepsilon}T^{6(\sigma-1/2)+\varepsilon}\Big ). 
\end{split}
\end{align}     

  We apply \eqref{Iexp}, \eqref{I1est} and \eqref{I2est1} to see that 
\begin{align} 
\label{Iest}
\begin{split}
 I(\sigma)  \ll_{\varepsilon} & X^{4-4\sigma}\Big (\int\limits^{2T}_T|Z(r,2-\sigma+iy; \psi)|^2 \dif y+N(r)^{\sigma-1/2+\varepsilon}T^{6(\sigma-1/2)+\varepsilon}\Big )+T+X \\
  \ll & X^{4-4\sigma}I(2-\sigma)+T+X+N(r)^{1/4+\varepsilon}X^{5/2-2\sigma}e^{-T}. 
\end{split}
\end{align}   

   We now define
\begin{align*} 
%%\label{Itildeexp}
 \widetilde I (\sigma) :=\int\limits^{2T}_T\widetilde Z(r,s;\psi) \dif t. 
\end{align*} 
  We then deduce from \eqref{wideZ} and \eqref{Iest} that 
\begin{align} 
\label{wideIret}
\begin{split}
 \widetilde I (\sigma) \ll_{\varepsilon} & X^{4-4\sigma}\Big (\int\limits^{2T}_T \widetilde Z(r,2-\sigma+iy; \psi)^2 \dif y+N(r)^{\sigma-1/2+\varepsilon}T^{6(\sigma-1/2)+\varepsilon}\Big )+T+X+N(r)^{1/4+\varepsilon}X^{5/2-2\sigma}e^{-T} \\
  \ll_{\varepsilon} & X^{4-4\sigma}\widetilde I(2-\sigma)+X^{4-4\sigma}N(r)^{\sigma-1/2+\varepsilon}T^{6(\sigma-1/2)+\varepsilon}+T+X+N(r)^{1/4+\varepsilon}X^{5/2-2\sigma}e^{-T} \\
  \ll_{\varepsilon} & X^{4-4\sigma}T^{12\sigma-12}N(r)^{2\sigma-2}\widetilde I(\sigma)+X^{4-4\sigma}N(r)^{\sigma-1/2+\varepsilon}T^{6(\sigma-1/2)+\varepsilon}+T+X+N(r)^{1/4+\varepsilon}X^{5/2-2\sigma}e^{-T}, 
\end{split}
\end{align}  
  where the last majorant above follows from \eqref{wideZbound}.  \newline
 
  We write the implied constant in \eqref{wideIret} as $C_{\varepsilon}$.  For $\sigma \geq 1 + \varepsilon$, and
\begin{align*} 
%%\label{wideZ}
\begin{split}
 X = & (2C_{\varepsilon})^{1/(4\varepsilon)}T^{3}N(r)^{1/2},
\end{split}
\end{align*}  
  we get
\begin{align*} 
%%\label{wideZ}
\begin{split}
  I(\sigma)  \ll_{\varepsilon} \widetilde I(\sigma) \ll_{\varepsilon} X^{4-4\sigma}N(r)^{\sigma-1/2+\varepsilon}T^{6(\sigma-1/2)+\varepsilon}+T+X+N(r)^{1/4+\varepsilon}X^{5/2-2\sigma}e^{-T}  \ll_{\varepsilon} & T^{3}N(r)^{1/2+\varepsilon}.
\end{split}
\end{align*}   
 
    As $|\zeta_K(4s-3)| \gg_{\varepsilon} 1$ for $\sigma \geq 1+\varepsilon$, we conclude that for $T \geq 1$, 
\begin{align*} 
%%\label{wideZ}
\begin{split}
  \int\limits^{2T}_T\left\vert h(r,1+\varepsilon+it, \psi; 1) \right\vert^2 \dif t  \ll T^3 N(r)^{1/2+\varepsilon}.
\end{split}
\end{align*}   
   We now sum over $T \geq 1$ dyadically and apply \eqref{Zest} to estimate the integral of $\left\vert h(r,1+\varepsilon+it, \psi; 1) \right\vert^2$ on the interval $[-1, 1]$ to deduce \eqref{Integralbound}. This completes the proof of the lemma. 
\end{proof}

  Our last lemma contains the analytic information on the function $h_a (r, s, \psi)$ defined in \eqref{hdef}. 
\begin{lemma}  \label{lem:sizeof_ha}
 With the notations as above, write $r=r_1r_2^2 r_3^3r^4_4$ with $r_i\odd, r_1, r_2, r_3$ square-free and mutually co-prime. Let $r_4^\ast$ be the product of the primes dividing $r_4$ but not $r_1r_2r_3$.  Suppoes $a$ is primary, square-free and $(a, r)=1$. Then the function $h_a (r, s, \psi)$ can be meromorphically continued to the whole complex plane. It is entire for $\Re(s) > 1$ except possibly for a simple pole at $s=5/4$. Let $\varepsilon>0$ and $s =\sigma+it$ satisfying $1+\varepsilon \le \sigma \le 3/2+\varepsilon$ and $|s-5/4|> 1/8$, we have
\begin{equation}
\label{bound-convexity}
h_a (r, s, \psi) \ll N(r)^{(3/2-\sigma)/2+\varepsilon} N(a )^{2-2\sigma+\varepsilon} (1+t^2)^{3/2 \cdot (3/2-\sigma)+\varepsilon}.
\end{equation}
  Also,  the residue at $s=5/4$ satisfies
\begin{align}
\label{res54}
\begin{split}
  \mathop{\res}_{s=5/4} h_a (r, s, \psi)  \ll N(a )^{-1+\varepsilon} N (r)^{1/8+\varepsilon}.
\end{split}
\end{align}
 Furthermore,
\begin{align}
\label{IntegralMeanValuebound4h_a}
\begin{split}
\int\limits_{-T}^{T} | h_a(r,1+\varepsilon+it, \psi) |^2 \dif t \ll T^3 N(a)^{-\varepsilon} N(r)^{1/2+\varepsilon}.
\end{split}
\end{align}
\end{lemma}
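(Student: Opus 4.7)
The plan is to reduce $h_a(r,s,\psi)$ to Patterson-type Dirichlet series of the form $h(\cdot,s,\cdot;1)$, so that the analytic information from Lemmas \ref{lem2} and \ref{lem:from-HB}, together with Proposition \ref{resrelation}, can be applied.  Writing $b=ac$ with $c$ primary, I first observe that because $a$ is primary square-free and coprime to $r$, the vanishing in \eqref{rel4} forces $v_\varpi(c)=0$ for each $\varpi\mid a$: higher exponents contribute $g_{K,j}=0$ since $(\tilde r,\varpi)=1$ and $g_{K,j}(\varpi^f)=0$ for $f\ge 2$.  Thus the sum in \eqref{hdef} reduces to $c$ primary with $(c,ra)=1$.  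Applying \eqref{rel3} to $g_{K,j}(r,ca)$ and \eqref{rel1} to discard the trivial factor from $r_4^4$ (since $\overline{\leg{r_4^4}{c}_4}=1$) yields
\[
   h_a(r,s,\psi)=\psi(a)\,g_{K,j}(r_1r_2^2r_3^3,a)\,N(a)^{-s}\cdot H(s),
\]
where $H(s):=h(\tilde r,s,\psi\rho_a; r_1r_2r_3a\,r_4^\ast)$ with $\tilde r:=r_1(r_2a)^2r_3^3$ and $\rho_a(\cdot)=(-1)^{C(a,\cdot)}$ the ray class character of Section \ref{section: smooth Gauss}.  By \eqref{gest}, the prefactor has modulus at most $N(a)^{1/2-\sigma}$.

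Next I strip the condition $(c,r_4^\ast)=1$: since the primes $\varpi\mid r_4^\ast$ are coprime to $\tilde r$ and to $a$ and \eqref{rel4} allows only $v_\varpi(c)\le 1$, a M\"obius-type inclusion-exclusion over divisors of $r_4^\ast$ expresses $H(s)$ as a sum of at most $2^{\omega(r_4^\ast)}=O(N(r)^\varepsilon)$ Patterson series of the form $h(\tilde r,s,\psi'; r_1r_2r_3a)$, with $\psi'$ obtained from $\psi\rho_a$ by twists by ray class and quadratic symbols attached to primes of $r_4^\ast$.  Lemma \ref{lem2} applied with $\tilde r_1=r_1,\tilde r_2=r_2a,\tilde r_3=r_3$ then, through \eqref{2.13}, \eqref{2.14} and \eqref{2.15}, expresses each such series as a sum over divisors $d\mid r_2a$ of $h(\tilde r/d^2,s,\psi'';1)$, weighted by Euler-type factors at primes of $r_1r_2r_3a$ that are uniformly $O(1)$ on the region of interest; the total number of inner series produced is $O((N(r)N(a))^\varepsilon)$.

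From this reduction, the first three conclusions follow easily.  Patterson's Lemma \cite[p.~200]{P} (or equivalently the bound \eqref{Zest} from the proof of Lemma \ref{lem:from-HB}) gives the meromorphic continuation of each $h(\tilde r/d^2,s,\psi'';1)$ to $\mathbb{C}$ with at most a simple pole at $s=5/4$ in $\Re(s)>1$, together with the pointwise bound $\ll N(\tilde r)^{(3/2-\sigma)/2+\varepsilon}(1+t^2)^{3(3/2-\sigma)/2+\varepsilon}$.  Using $N(\tilde r)\le N(r)N(a)^2$ and multiplying by the prefactor $N(a)^{1/2-\sigma}$ gives total $N(a)$-exponent $(1/2-\sigma)+(3/2-\sigma)=2-2\sigma$, matching \eqref{bound-convexity}.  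For \eqref{IntegralMeanValuebound4h_a}, Lemma \ref{lem:from-HB} gives $\int_{-T}^T|h(\tilde r/d^2,1+\varepsilon+it,\psi'';1)|^2\,\dif t\ll T^3 N(\tilde r)^{1/2+\varepsilon}\le T^3 N(r)^{1/2+\varepsilon}N(a)^{1+\varepsilon}$, and Cauchy--Schwarz over the $(N(r)N(a))^\varepsilon$ terms combined with the prefactor modulus squared $N(a)^{1-2\sigma}=N(a)^{-1-2\varepsilon}$ at $\sigma=1+\varepsilon$ produces the required $N(a)^{-\varepsilon}$ saving.

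The residue bound \eqref{res54} is the main obstacle.  Patterson's direct estimate $\res_{s=5/4}h(\tilde r/d^2,s,\cdot;1)\ll N(\tilde r)^{1/8+\varepsilon}$, combined with the prefactor $N(a)^{-3/4}$ and $N(\tilde r)\le N(r)N(a)^2$, only yields $N(a)^{-1/2+\varepsilon}N(r)^{1/8+\varepsilon}$, which falls short by a full factor of $N(a)^{1/2}$.  The saving is supplied by Proposition \ref{resrelation}: splitting $h(\tilde r/d^2,s,\cdot;1)$ into its $\phi_1,\phi_2$ components via \eqref{psiiexp} (absorbing the Hecke character $\overline{\leg{\sqrt{D_K}}{c}_j}$ from \eqref{GKrel}), I note that $\tilde r/d^2=((r_2a)/d)^2\cdot(r_1r_3^3)$ with $((r_2a)/d,\,r_1r_3^3)=1$.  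Proposition \ref{resrelation} with $m=(r_2a)/d$ and $\nu=r_1r_3^3$ then extracts the factor $\overline{\psi((r_2a)/d)\,g_j(r_1r_3^3,(r_2a)/d)}\,N((r_2a)/d)^{-3/4}$, of modulus at most $N((r_2a)/d)^{-1/4}$ by \eqref{gest}, leaving the reduced residue $\res_{s=5/4}\phi_i(s,r_1r_3^3,\cdot,1)\ll N(r_1r_3^3)^{1/8+\varepsilon}\le N(r)^{1/8+\varepsilon}$.  Summing $\sum_{d\mid r_2a}N(d)^{-5/4}N((r_2a)/d)^{-1/4}\ll N(r_2a)^{-1/4+\varepsilon}$, where $N(d)^{-5/4}$ accounts for the weight $\mu_K(d)\psi(d)^3 N(d)^{2-3s}$ together with $\prod_{\varpi\mid d}|g_{K,j}(\varpi)|\le N(d)^{1/2}$ from Lemma \ref{lem2} at $s=5/4$, and multiplying by the external prefactor $N(a)^{-3/4}$ together with $N(r_2)^{-1/4}\le N(r_2)^{1/4}N(r_4)^{1/2}$ yields exactly $N(a)^{-1+\varepsilon}N(r)^{1/8+\varepsilon}$.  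The most delicate point is the bookkeeping of the various ray class character twists from $\rho_a,\rho_{d^3}$ and $\overline{\leg{\sqrt{D_K}}{\cdot}_j}$ so that Proposition \ref{resrelation} applies cleanly to each $\phi_i$-component.
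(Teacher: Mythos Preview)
Your proposal is correct and follows essentially the same route as the paper's proof: factor out $a$ via \eqref{rel3}, strip $r_4^\ast$ by M\"obius, reduce via Lemma~\ref{lem2} to $h(\cdot,s,\cdot;1)$, then invoke Patterson's lemma for \eqref{bound-convexity}, Proposition~\ref{resrelation} for the crucial $N(a)^{-1/4}$ saving in \eqref{res54}, and Lemma~\ref{lem:from-HB} with Cauchy--Schwarz for \eqref{IntegralMeanValuebound4h_a}. One small imprecision: the M\"obius step over $d\mid r_4^\ast$ does not merely twist the character but (via \eqref{rel3}) shifts the Gauss-sum argument to $\tilde r d^2$ and introduces a prefactor $g_{K,j}(\tilde r,d)N(d)^{-s}$, so the inner series are $h(\tilde r d^2,s,\psi\rho_{ad};\,adr_1r_2r_3)$ rather than $h(\tilde r,s,\psi';r_1r_2r_3a)$; this is exactly how the paper proceeds, and since $N(d)\le N(r_4)$ the extra factors are harmlessly absorbed into the $N(r)^{\varepsilon}$ in each bound.
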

\begin{proof}
Recall that if $(r, b)=1$, $g_{K,j}(r,b)=0$ for $b$ not square-free by \eqref{rel1}, \eqref{rel2} and \eqref{rel4}. We thus replace  $b$ in the sum ${h}_a(r,s, \psi)$ by $b=ab'$ with $b'\odd$ and co-prime to $a$ and apply \eqref{rel3}.  This leads to
\begin{align}
\label{sumoverb}
\begin{split}{{h}}_a (r, s, \psi) =&
 \su{b \odd \\ a\mid b, \; (b, r)=1} \frac{\psi(b) g_{K,j}(r,b)}{N(b)^s} = \frac{g_{K,j}(r, a ) \psi(a)}{N(a)^s}  \su{b \odd \\ (b, ar)=1} \frac{\psi(b) (-1)^{C(a, b)}g_{K,j}(a^2r, b)}{N(b)^s}.
\end{split}
\end{align}
 As $(b,r)=1$ in the above sum, we apply \eqref{rel1} to see that $g_{K,j}(a^2r, b) = \overline{\chi_{j,b} (r_4^4)} g_{K,j}(a^2r_1r_2^2r^3_3,b) = g_{K,j}(a^2r_1r_2^2r^3_3,b)$. Let $r_4^\ast$ be the product of the primes dividing $r_4$ but not $r_1r_2r_3$.  It follows that
\begin{align}
\label{sumoverd}
\begin{split}
& \su{b \odd \\ (b, ar)=1} \frac{\psi(b) (-1)^{C(a, b)}g_{K,j}(a^2r, b)}{N(b)^s} = \su{b \odd \\ (b, ar_1r_2^2r^3_3)=1} \frac{\psi(b) (-1)^{C(a, b)} g_{K,j}(a^2r_1r_2^2r^3_3,b)}{N(b)^s} \su{d \odd \\d \mid (b,r_4^\ast)} \mu_K(d)\\
=& \su{d\odd \\ d \mid  r_4^\ast} \frac{\mu_K(d) \psi(d) (-1)^{C(a, d)}}{N(d)^s} \su{b \odd  \\ (bd, ar_1r_2^2r^3_3)=1} \frac{\psi(b) (-1)^{C(a, b)} g_{K,j}(a^2r_1r_2^2r^3_3,bd)}{N(b)^s} \\
=& \su{d\odd \\ d \mid  r_4^\ast} \frac{\mu_K(d) \psi(d) (-1)^{C(a, d)}}{N(d)^s} \su{b \odd \\ (b, adr_1r_2^2r^3_3)=1} \frac{\psi(b)(-1)^{C(a, b)} g_{K,j}(a^2r_1r_2^2r^3_3,bd)}{N(b)^s} \\
=&\su{d\odd \\ d \mid  r_4^\ast} \frac{\mu_K(d) \psi(d)(-1)^{C(a, d)}g_{K,j}(a^2r_1r_2^2r^3_3,d)}{N(d)^s} \su{b \odd  \\ (b, adr_1r_2^2r^3_3)=1} \frac{\psi(b)(-1)^{C(d, b)}(-1)^{C(a, b)} g_{K,j}(a^2d^2r_1r_2^2r^3_3,b)}{N(b)^s} \\
=&\su{d\odd \\ d \mid  r_4^\ast} \frac{\mu_K(d) \psi(d)(-1)^{C(a, d)}g_{K,j}(a^2r_1r_2^2r^3_3,d)}{N(d)^s} h(r_1a^2d^2r_2^2r^3_3,s,\rho_{ad}\psi; adr_1r_2r_3),
\end{split}
\end{align}
using again \eqref{rel3} and observing that for $(a,r)=1$, $(-1)^{C(ad,b)}=(-1)^{C(a,b)}(-1)^{C(d,b)}=\rho_{ad}(b)$.
Now we apply \eqref{2.13}--\eqref{2.15} to write $h(r_1a^2d^2r_2^2r^3_3,s,\rho_{ad}\psi; adr_1r_2r^3)$ as linear combinations over primary $e|adr_2$ of expressions involving $h(r_1a^2d^2r_2^2r^3_3/e^2,s,\Psi; 1)$ whose coefficients are $\ll N(ar)^{\varepsilon}N(e)^{-1/2+\varepsilon}$ for $\Re(s)>1$, using \eqref{gest}. Here $\Psi$ is a certain ray character modulo $16$ depending on $a, d, e, r$ only.  We now apply the Lemma on p. 200 of \cite{P}.  This gives that each $h(r_1a^2d^2r_2^2r^3_3/e^2,s,\Psi; 1)$ can be meromorphically continued to the whole complex plane and thus so can $h_a (r, s, \psi)$.  Moreover, when $1+\varepsilon \le \sigma \le 3/2+\varepsilon$ and $|s-5/4|> 1/8$, we have
\begin{equation*}
%%\label{bound-convexity}
h(r_1a^2d^2r_2^2r^3_3/e^2,s,\Psi; 1) \ll N(ra^2)^{ (3/2-\sigma)/2+\varepsilon} (1+t^2)^{3/2 \cdot (3/2-\sigma)+\varepsilon}.
\end{equation*}
  We now sum over $e$ trivially and use \eqref{gest}, \eqref{sumoverb},  \eqref{sumoverd} to further sum over $d$ trivially. Using again \eqref{gest} to bound $g_{K,j}(r, a )$ by $N(a)^{1/2}$, we deduce \eqref{bound-convexity}. \newline

 Next, we apply Proposition \ref{resrelation} by noting from \eqref{GKrel} that $g_{K,j}(k,c)$ and  $g_{j}(k,c)$ differ only by a ray class group character modulo $16$, so that the assertion of Proposition \ref{resrelation} is still valid if one replaces $g_{j}(k,c)$ by  $g_{K,j}(k,c)$ in the definition of $\phi_1, \phi_2$ in \eqref{psiiexp} throughout. Together with the bound $\overline{g_{K,j}(\nu, m)} \ll N(m)^{1/2+\varepsilon}$ for $(v, m)=1$ from \eqref{gest}, we get
\begin{align*}
%%\label{FwithPerron3}
\begin{split}
 \res_{s=5/4}h(r_1a^2d^2r_2^2r^3_3/e^2,s,\Psi; 1) \ll & N(adr_2)^{-1/4+\varepsilon}N(e)^{1/4+\varepsilon}|\res_{s=5/4}h(r_1r^3_3,s,\Psi; 1)| \\
  \ll & N(ad)^{-1/4+\varepsilon}N(e)^{1/4+\varepsilon} N(r)^{1/8+\varepsilon},
\end{split}
\end{align*}
  where the last estimation above follows from the Lemma on p. 200 of \cite{P} again. Now repeating the arguments that lead to the estimation given in \eqref{bound-convexity} allows one to establish \eqref{res54}. \newline

  Lastly, for \eqref{IntegralMeanValuebound4h_a}, we argue in a manner similar to the proof of \cite[(23)]{DG22}.  Upon using Cauchy's inequality twice,
\begin{align*}
%%\label{FwithPerron3}
\begin{split}
  |h_a (r, 1+\varepsilon+it, \psi)|^2 &\ll_{\varepsilon} N(a)^{-1-2\varepsilon} N(r)^{\varepsilon}\su{d\odd \\ d \mid  r_4^\ast} \su{e \odd \\ e \mid dar_2} |h(r_1a^2d^2r_2^2r^3_3/e^2,1+\varepsilon+it,\Psi; 1)|^2.
\end{split}
\end{align*}
  We now apply Lemma \ref{lem:from-HB} to deduce that 
\begin{align*}
%%\label{FwithPerron3}
\begin{split}
 \int\limits_{-T}^{T} | h_a(r,1+\varepsilon+it, \psi) |^2 \dif t
&\ll T^3 N(a)^{-1-2\varepsilon} N(a^2r_1r_2^2r^3_3)^{1/2} \su{d\equiv 1\mod 3\\ d \mid  r_4^\ast} N(d) \su{e \odd \\ e \mid dar_1}  N(e)^{-1} \\
&\ll T^3 N(a)^{-\varepsilon} N(r_1r_2^2r^3_3(r_4^\ast)^2)^{1/2+2\varepsilon} \ll T^3 N(a)^{-\varepsilon} N(r)^{1/2+2\varepsilon}. 
\end{split}
\end{align*}
 This establishes \eqref{IntegralMeanValuebound4h_a} and hence completes the proof of the lemma. 
\end{proof}

\subsection{Completion of the proof of Proposition \ref{prop: Fbound}}

  We may assume that $z-1/2$ is a positive rational integer to deduce from \cite[(24)]{DDHL} that for $\sigma_1 = 1+ \varepsilon$,
\begin{align}
\label{FwithPerron1}
\begin{split}
F_a(z,r,\psi) = \frac{1}{2\pi i} \int\limits_{\sigma_1 - i T}^{\sigma_1 + i T} {h}_a (r, s+{\textstyle \frac12}, \psi) \frac{z^s}{s} \dif s +O(T^{-1} z^{1+\varepsilon} N(a)^{-1}).
\end{split}
\end{align}
 We shift the contour of the integral in \eqref{FwithPerron1} to $\Re s = 1/2+\varepsilon$, encountering by Lemma \ref{lem:sizeof_ha} a possible simple pole of $h_a(r,s, \psi)$ at $s=5/4$, to get
\begin{align*}
%%\label{FwithPerron1}
\begin{split}
\frac{1}{2\pi i} & \int\limits_{\sigma_1 - i T}^{\sigma_1 + i T} {h}_a (r, s+{\textstyle \frac12}, \psi) \frac{z^s}{s} \dif s \\
&  = \frac {4z^{3/4}}3 \mathop{\res}_{s=5/4}  {h}_a (r, s, \psi) + \frac{1}{2\pi i} \biggl(\int\limits_{\sigma_1 - i T}^{\sigma_1-1/2 - i T} + \int\limits_{\sigma_1-1/2 + i T}^{\sigma_1 + i T} + \int\limits_{\sigma_1-1/2 - i T}^{\sigma_1-1/2 + i T} \biggr) {h}_a (r, s+{\textstyle \frac12}, \psi) \frac{z^s}{s} \dif s.
\end{split}
\end{align*}
 We further apply the estimation \eqref{bound-convexity} to see that
\begin{align}
\label{FwithPerron2}
\begin{split}
\biggl( \int\limits_{\sigma_1 - i T}^{\sigma_1-1/2 - i T} + \int\limits_{\sigma_1-1/2 + i T}^{\sigma_1 + i T} \biggr) {h}_a (r, s+{\textstyle \frac12}, \psi) \frac{z^s}{s} \dif s \ll & N(ar )^{\varepsilon}\int\limits_{\sigma_1-1/2}^{\sigma_1} T^{3(\sigma_1-\sigma)-1} N(r)^{\frac 12 (\sigma_1-\sigma)} N(a )^{1 -2\sigma}  z^\sigma \dif \sigma \\
 \ll & N(ar)^{\varepsilon} \bigl(T^{-1} N(a )^{-1-\varepsilon}  z^{1+\varepsilon} + T^{1/2}N(r)^{1/4} z^{1/2+\varepsilon}\bigr).
\end{split}
\end{align}

  We conclude from \eqref{res54}, \eqref{FwithPerron1} and \eqref{FwithPerron2} that
\begin{align}
\begin{split}
\label{sumFbound}
 F_a(z,r,\psi)  \ll  z^{1/2+\varepsilon} & \int\limits_{-T}^{T}  \Big| \frac{h_a(r, \sigma_1+it,\psi) }{\sigma_1+it} \dif t\Big| + z^{3/4} N(a )^{-1+\varepsilon} N (r)^{1/8+\varepsilon}
\\ 
& +N(ar)^{\varepsilon} \bigl(T^{-1} N(a )^{-1-\varepsilon}  z^{1+\varepsilon} + T^{1/2}N(r)^{1/4} z^{1/2+\varepsilon}\bigr).
\end{split}
\end{align}

By the mean value estimate \eqref{IntegralMeanValuebound4h_a} of Lemma \ref{lem:sizeof_ha}
 and Cauchy's inequality, we obtain
\begin{align}
\label{FwithPerron3}
\begin{split}
\int\limits_{-T}^{T} \left| h_a(r,\sigma_1+it,\psi) \right| \dif t \ll T^2 N(a)^{-\varepsilon} N(r)^{1/4+\varepsilon},
\end{split}
\end{align}
 so that integration by parts yields
\begin{align*}
%%\label{FwithPerron4}
\begin{split}
 \int\limits_{-T}^{T}  \Big| \frac{h_a(r,\sigma_1+it,\psi) }{\sigma_1+it} \dif t \Big| \ll T N(a)^{-\varepsilon} N(r)^{1/4+\varepsilon}. 
\end{split}
\end{align*}

 We deduce from \eqref{sumFbound} and \eqref{FwithPerron3} that
\begin{align*}
%%\label{FwithPerron3}
\begin{split}
 F_a(z,r,\psi) \ll  z^{1/2+\varepsilon}T N(a)^{-\varepsilon} N(r)^{1/4+\varepsilon}+ z^{3/4} N(a )^{-1+\varepsilon} N (r)^{1/8+\varepsilon}
+N(ar)^{\varepsilon} \bigl(T^{-1} N(a )^{-1-\varepsilon}  z^{1+\varepsilon} + T^{1/2}N(r)^{1/4} z^{1/2+\varepsilon}\bigr).
\end{split}
\end{align*}
  Note that our assumption on $z$ implies that $z^{1/4}N(a)^{-1/2}N(r)^{-1/8} \geq 1$.  Set $T$ to be this value in the above estimation and we deduce readily
  the bound in \eqref{Bound4F}. This completes the proof of Proposition \ref{prop: Fbound}.

\section{Proof of Proposition \ref{lemma:laundrylist}}
\label{section-Patterson}

    A key step involved in the proof of Proposition \ref{lemma:laundrylist} is the following bound on a Dirichlet polynomial formed from quartic Gauss sums, which is analogue to the cubic case established in \cite[Theorem 4.4]{DG22}.
\begin{prop}
\label{lemg3} Let $\psi$ be any ray class character modulo $16$ and $r$ be any primary element in $\mathcal O_K$.
We have for real $Z>1$ and any $\varepsilon>0$,
\begin{align}
\label{glambdabound}
\begin{split}
  H_Z(r;\psi) := \sum_{\substack {\varpi \odd \\ (\varpi, r)=1 \\ N(\varpi) \leq Z}} \frac {\Lambda_K(\varpi)\psi(\varpi) g_{K,j}(r, \varpi) }{\sqrt{N(\varpi)}}  \ll & \min (Z^{1+\varepsilon},  N(r)^{1/16+\varepsilon}Z^{7/8+\varepsilon}).
\end{split}
\end{align}
\end{prop}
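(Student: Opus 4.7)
The first bound $H_Z(r;\psi)\ll Z^{1+\varepsilon}$ is immediate: by \eqref{gest} each summand is bounded in absolute value by $\log N(\varpi)$, and the prime number theorem for $\mathcal{O}_K$ gives the claim. For the nontrivial bound, the plan is to apply a combinatorial identity for $\Lambda_K$ (either Vaughan's or Heath--Brown's) combined with a dyadic decomposition, and to reduce the resulting sums to two types. After partial summation to dispose of any $\log$ factor and dyadic splitting of the variables, the sum $H_Z(r;\psi)$ is majorized by $(\log Z)^{O(1)}$ times sums of the following two types, where $U$ is a free parameter to be chosen:
\begin{align*}
S_I(A,B) &= \sum_{N(a)\sim A} \mu_K(a)\, \alpha_a \sum_{\substack{b\,\text{primary},\,(b,ar)=1 \\ N(b)\sim B}} \tilde g_\psi(r,ab),\qquad A\le U,\ AB\asymp Z,\\
S_{II}(C,D) &= \sum_{\substack{c\sim C \\ (c,r)=1}}\sum_{\substack{d\sim D \\ (d,rc)=1}} \alpha_c\,\beta_d\,\tilde g_\psi(r,cd),\qquad U<C\le D,\ CD\asymp Z,
\end{align*}
with coefficients $\alpha_\bullet,\beta_\bullet$ bounded by $1$ in modulus and supported on primary squarefree elements.

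For the Type~I sum $S_I$, the multiplicativity of Gauss sums given in \eqref{rel3} and \eqref{rel1} yields
\[
\tilde g_\psi(r,ab) = (-1)^{C(b,a)}\,\tilde g_\psi(r,a)\,\tilde g_\psi(ra^2,b).
\]
Since $\lvert\tilde g_\psi(r,a)\rvert\le 1$ by \eqref{gest}, fixing $a$ reduces the inner sum over $b$ to a partial sum of the form $F_1(\cdot,ra^2,\psi')$ for a suitable ray class character $\psi'$ modulo $16$. Proposition \ref{prop: Fbound} applied with the replacement $r\mapsto ra^2$ (whose admissibility condition $B\ge N(ra^2)^{1/2}=N(r)^{1/2}N(a)$ will be verified once $U$ is chosen) gives the bound $\ll B^{3/4+\varepsilon}\,N(r)^{1/8+\varepsilon}\,N(a)^{1/4+\varepsilon}$. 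Summing trivially over $a$,
\[
S_I \ll Z^{3/4+\varepsilon}\,N(r)^{1/8+\varepsilon}\sum_{N(a)\le U} N(a)^{-1/2+\varepsilon} \ll Z^{3/4+\varepsilon}\,N(r)^{1/8+\varepsilon}\,U^{1/2+\varepsilon}.
\]

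For the Type~II sum $S_{II}$, the same multiplicativity, combined with the identity $g_{K,4}(rc^2,d)=\overline{(c/d)_2}\,g_{K,4}(r,d)$ arising from \eqref{rel1} with $l=4$ and $s=c^2$, converts the double sum to
\[
S_{II}=\sum_{c,d}\alpha'_c\,\beta'_d\,\overline{\Bigl(\tfrac{c}{d}\Bigr)_2}\,(-1)^{C(c,d)},
\]
where $\alpha'_c=\alpha_c\,\tilde g_\psi(r,c)$, $\beta'_d=\beta_d\,\tilde g_\psi(r,d)$ are again bounded by $1$. The sign $(-1)^{C(c,d)}$ is a ray class character in each variable modulo $16$ and hence can be absorbed into the coefficients without loss. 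Applying Cauchy--Schwarz to the $c$--sum and invoking the quadratic large sieve of Theorem \ref{quadsieve} on the $d$--sum produces
\[
S_{II}\ll (CD)^{\varepsilon}\,(C+D)^{1/2}\,(CD)^{1/2} \ll Z^{1+\varepsilon}\,U^{-1/2},
\]
where in the last step we used $C+D\ll Z/U$ because $C\ge U$ forces $D\le Z/U$.

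Balancing the Type~I bound $Z^{3/4+\varepsilon}N(r)^{1/8+\varepsilon}U^{1/2+\varepsilon}$ against the Type~II bound $Z^{1+\varepsilon}U^{-1/2}$ leads to the choice $U=Z^{1/4}N(r)^{-1/8}$, which also yields $U^2=Z^{1/2}N(r)^{-1/4}$, exactly matching the admissibility constraint $N(a)\le Z^{1/2}N(r)^{-1/4}$ required in the Type~I step. With this choice both sums are $\ll Z^{7/8+\varepsilon}N(r)^{1/16+\varepsilon}$, yielding the second half of \eqref{glambdabound}. I expect the main technical obstacle to be the careful verification that the Heath--Brown or Vaughan decomposition produces only Type~I/Type~II pieces within the admissible parameter ranges, and the bookkeeping of the ray class characters modulo $16$ arising from the signs $(-1)^{C(\cdot,\cdot)}$ and from the supplementary laws \eqref{2.05}; once this is done, the analytic heart of the argument is exactly Proposition \ref{prop: Fbound} together with the quadratic large sieve.
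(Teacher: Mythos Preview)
Your proposal is correct and follows essentially the same route as the paper: Vaughan's identity (the paper's Lemma~\ref{V-Identity}), Type~I sums handled via Proposition~\ref{prop: Fbound}, Type~II sums handled via the quadratic large sieve (the paper's Proposition~\ref{type2prop}), and the identical balancing parameter $U=u=Z^{1/4}N(r)^{-1/8}$. The only cosmetic difference is that the paper applies Proposition~\ref{prop: Fbound} directly to $F_a(z,r,\psi)$ rather than first rewriting it as $F_1(z/N(a),ra^2,\psi')$ as you do; note also that the sign $(-1)^{C(c,d)}$ is not literally a separable character but becomes constant after splitting $c,d$ into residue classes modulo $4$ (as the paper does), and the regime $Z<N(r)^{1/2}$, where $U<1$, is covered by the trivial bound.
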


\subsection{Vaughan's Identity}

   The proof of Proposition \ref{lemg3} follows the proof of \cite[Theorem 1]{H} as well as the proof of \cite[Theorem 4.4]{DG22}. In the sequel, we use $a \sim A$ to mean that $A<a \leq 2A$ for $a \in \mathbb{N}$ and $A>0$.  Recall also that the expression $\tilde g_\psi (r,c)$ is defined in \eqref{gtildedef}. 
  We start with the following result that is a slight modification of \cite[Lemma 5.1]{DG22}, which comes from an application of Vaughan's identity (see \cite[\S 24]{Da} or \cite[p. 101]{H}).
\begin{lemma}
\label{V-Identity}
  Let $\psi$ be any ray class character modulo $16$ and $r$ be any primary element in $\mathcal O_K$. Define
\begin{align*}
%%\label{Sigmadef}
 \Sigma_{k}(Z, r,u) = \sum_{a,b,c} \Lambda_K(a) \mu_K(b) \tilde{g}_\psi(r,abc),\qquad (0 \le k \le 4),
\end{align*}
where $\tilde{g}_\psi$ is defined in \eqref{gtildedef}, each sum runs over $a,b,c \in \mathcal O_K$ which are square-free with $a,b,c$ primary, $N(abc) \sim Z$,  $(r, abc)=1$, and subject to the conditions
\begin{eqnarray*}
&N(bc) \le u, \quad\quad &k=0\\
&N(b) \le u, \quad\quad &k=1\\
&N(ab) \le u, \quad\quad &k=2'\\
&N(a), N(b) \le u < N(ab) \quad\quad &k=2''\\
&N(b) \le u < N(a), N(bc) \quad\quad &k=3\\
&N(a)< N(bc) \le u, \quad\quad &k=4.\\
\end{eqnarray*}
  Then
\begin{align*}
%%\label{Sigmadecomp}
 \Sigma_0(Z,r,u) = \Sigma_{1} (Z,r,u)- \Sigma_{2'}(Z,r,u) - \Sigma_{2''} (Z,r,u) - \Sigma_3 (Z,r,u)+\Sigma_4(Z,r,u).
\end{align*}
Furthermore,
$$
\Sigma_0(Z,r,u) = H_{2Z}(r;\psi) - H_Z(r;\psi),$$
and if we suppose that $1 \le u \le Z^{1/2}$, then $\Sigma_4(Z,r,u)=0$.
\end{lemma}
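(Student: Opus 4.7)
The statement is Vaughan's identity adapted to the family of quartic Gauss-sum twists, closely mirroring \cite[Lemma 5.1]{DG22} in the cubic setting. The proof proceeds by three essentially formal checks: the identification $\Sigma_0 = H_{2Z}-H_Z$, the Vaughan decomposition itself, and the vanishing $\Sigma_4=0$ under the hypothesis $u\le Z^{1/2}$. I anticipate no analytic obstacle; the only care required is in bookkeeping with M\"obius cancellation, and the main subtlety is that one of the pieces of the pointwise decomposition must be replaced by the $\Sigma_4$-condition, a substitution that is valid only after summation.

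For the identification $\Sigma_0 = H_{2Z}(r;\psi)-H_Z(r;\psi)$, the starting point is the convolution identity $\Lambda_K(n)=\sum_{abc=n}\Lambda_K(a)\mu_K(b)$ valid for square-free $n$; it follows from the Gaussian-integer analogue of $\Lambda=\mu\ast\log$, namely $\Lambda_K(n)=\sum_{d\mid n}\mu_K(d)\log N(n/d)$, after expanding $\log N(c)=\sum_{a\mid c}\Lambda_K(a)$. Since $\tilde g_\psi(r,n)$ vanishes unless $n$ is square-free, only such $n$ contribute to $\Sigma_0$. With the cutoff $N(bc)\le u$ imposed, the indicator $[N(bc)\le u]=[N(n)/N(a)\le u]$ depends on $(b,c)$ only through their product $n/a$ and thus factors out of the inner $(b,c)$-sum; what remains is $\sum_{bc=n/a}\mu_K(b)=[n/a=1]$, forcing $n$ to be prime. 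This collapses $\Sigma_0$ to the prime sum $H_{2Z}(r;\psi)-H_Z(r;\psi)$.

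For the Vaughan decomposition I would first establish the pointwise indicator identity
\begin{align*}
[N(bc)\le u] &= [N(b)\le u] - [N(ab)\le u] - [N(a)\le u,\,N(b)\le u<N(ab)]\\
&\quad - [N(b)\le u<N(a),\,N(bc)] + [N(a)\le u,\,N(bc)\le u],
\end{align*}
which follows from expanding products of indicators and the containments $[N(ab)\le u]\subset[N(a)\le u,\,N(b)\le u]$ and $[N(bc)\le u]\subset[N(b)\le u]$. The final term differs from the $\Sigma_4$-condition $[N(a)<N(bc)\le u]$ by the indicator $[N(bc)\le N(a)\le u]$; after multiplying by $\Lambda_K(a)\mu_K(b)\tilde g_\psi(r,abc)$ and summing, the inner M\"obius sum $\sum_{bc=n/a}\mu_K(b)=[n/a=1]$ forces $n=a$ to be prime, and the remaining constraint $N(a)\le u$ is then vacuous because $u\le Z^{1/2}<Z<N(a)$ in the range $N(abc)\sim Z$. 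Hence the substitution is justified and the stated decomposition follows.

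Finally, $\Sigma_4=0$ under $1\le u\le Z^{1/2}$ is immediate from sizes: any triple $(a,b,c)$ in the support of $\Sigma_4$ satisfies $N(abc)=N(a)\cdot N(bc)<N(bc)^2\le u^2\le Z$, contradicting $N(abc)>Z$.
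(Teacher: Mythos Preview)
Your proof is correct and follows the standard Vaughan-identity argument that the paper defers to \cite[Lemma 5.1]{DG22} and \cite[p.~101]{H}; the paper itself gives no independent proof. One small remark: your verification of the decomposition $\Sigma_0=\Sigma_1-\Sigma_{2'}-\Sigma_{2''}-\Sigma_3+\Sigma_4$ invokes the hypothesis $u\le Z^{1/2}$ (to kill the discrepancy $[N(bc)\le N(a)\le u]$), whereas the lemma states the decomposition before introducing that hypothesis. In fact your argument only needs $u\le Z$, and the unconditional formulation in the lemma is a harmless imprecision carried over from the cited sources---the identity is only ever applied with $u\le Z^{1/2}$ anyway.
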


   The sums $\Sigma_k$ appearing in the above lemma are now divided into the so-called Type I sums ($\Sigma_{1}$ and $\Sigma_{2'}$) and Type II sums ($\Sigma_{2''}$ and $\Sigma_{3}$). An easy adaptation of the proof of \cite[Lemma 6.2]{DDHL} allows us to bound $\Sigma_1$ and $\Sigma_{2'}$.
\begin{lemma}
\label{type1init}
Let $Z \geq 1$, $1 \leq u<Z^{1/2}$. We have
\begin{align*}
%%\label{sigma1bd}
 |\Sigma_{1}(Z, r,u) |  & \leq \sum_{\substack{ N(\alpha) \leq u \\ \alpha \odd }} \mu^2_K(\alpha)\Big(|F_{\alpha}(2Z,r, \psi)| \log(2Z)+|F_{\alpha}(Z,r, \psi)| \log Z
+\int\limits_{Z}^{2Z} |F_{\alpha}(x,r, \psi) | \frac{\dif x}{x} \Big),     \\
%%\label{sigma2primebd}
| \Sigma_{2'}(Z, r,u) |
& \leq  (\log u )\sum_{\substack{N(\alpha) \leq u \\ \alpha \odd}} \mu^2_K(\alpha)(|F_{\alpha}(Z,r, \psi)|+|F_{\alpha}(2Z,r, \psi)| ),
\end{align*}
 where $F_a(z,r,\psi)$ is given in \eqref{Fadef}. 
\end{lemma}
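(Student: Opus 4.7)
The strategy is to convert each triple sum into an outer sum over a single primary square-free modulus $\alpha$ weighted by $F_\alpha(z,r,\psi)$, exploiting the fact that the condition $\alpha \mid b$ together with $(r,b)=1$ exactly matches the definition \eqref{Fadef}. Both cases rely on the observation that $\tilde g_\psi(r,abc)=0$ unless $abc$ is primary square-free and coprime to $r$, so when we write $abc = \alpha \cdot c'$ with $\alpha = ab$ (resp.\ $\alpha = b$), the requirement $(\alpha,c')=1$ and $c'$ square-free comes for free.

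For $\Sigma_{2'}$, the plan is to set $\alpha = ab$. Since $a$ is forced by $\Lambda_K(a)\mu_K^2(a)$ to be a prime and $b$ is square-free with $(a,b)=1$ (otherwise $abc$ is not square-free), $\alpha$ is primary and square-free with $N(\alpha)\le u$. Grouping all decompositions of $\alpha$ gives the coefficient
\[
\sum_{\substack{\alpha = ab\\ a\text{ prime}}}\Lambda_K(a)\mu_K(b)=(-1)^{\omega(\alpha)-1}\log N(\alpha),
\]
whose absolute value is at most $\log u$. The inner sum over $c$ is precisely
\[
\sum_{\substack{c\odd,\ (c,r\alpha)=1\\ Z<N(\alpha c)\le 2Z}}\tilde g_\psi(r,\alpha c) = F_\alpha(2Z,r,\psi)-F_\alpha(Z,r,\psi),
\]
after which the triangle inequality yields the stated bound for $\Sigma_{2'}$.

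For $\Sigma_1$, I would instead take $\alpha=b$ and absorb both $a$ and $c$ into one variable $m=ac$. Fixing $b$ with $N(b)\le u$, the inner double sum becomes $\sum_m\bigl(\sum_{a\mid m,\,a\text{ prime}}\Lambda_K(a)\bigr)\tilde g_\psi(r,bm)$, where $m$ runs over primary square-free elements with $(m,br)=1$ and $Z/N(b)<N(m)\le 2Z/N(b)$. The bracketed coefficient equals $\log N(m)$. Writing $x=N(bm)$, so that $\log N(m)=\log(x/N(b))$, and letting $B(x):=F_b(x,r,\psi)$, partial (Stieltjes) summation gives
\[
T_b=B(2Z)\log\!\tfrac{2Z}{N(b)}-B(Z)\log\!\tfrac{Z}{N(b)}-\int_Z^{2Z}B(x)\,\frac{\dif x}{x}.
\]
Since $N(b)\le u\le Z^{1/2}$, each log factor is nonnegative and bounded by $\log(2Z)$, yielding the stated expression after summing $|\mu_K(b)|\le \mu_K^2(b)$ over $N(b)\le u$.

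The only subtle point—which is routine but worth checking carefully—is the identity $\sum_{a\mid m,\,a\text{ prime}}\Lambda_K(a)=\log N(m)$ for primary square-free $m$, and the compatibility of the coprimality conditions hidden in the inner sums with the definition of $F_\alpha$. I do not expect any real obstacle: once the triple sum is regrouped, it reduces to a clean application of Abel summation in $\Sigma_1$ and a direct telescoping in $\Sigma_{2'}$.
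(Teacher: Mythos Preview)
Your proposal is correct and is precisely the standard argument the paper has in mind: the paper does not give a proof but merely says this is ``an easy adaptation of the proof of \cite[Lemma 6.2]{DDHL}'', and what you have written is exactly that adaptation. One cosmetic point: for $\Sigma_1$ you say each log factor is ``bounded by $\log(2Z)$'', but to match the stated inequality exactly you should use $\log(Z/N(b))\le \log Z$ for the second term; this is of course what you intended.
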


\subsection{Type II sums}
\label{sub-type2}

 We now estimate from above the Type II sums.  
\begin{prop} \label{type2prop}
Let $1 \leq u<Z^{1/2}$. We have for any $\varepsilon>0$, 
\begin{align}
\label{Sigma23est1}
\begin{split}
 \Sigma_{2''}(Z,r,u), \hspace{0.1cm} \Sigma_3 (Z,r,u) \ll_{\varepsilon} Z^{\varepsilon}(Z^{1/2} u+Z u^{-1/2}).
\end{split}
\end{align}
\end{prop}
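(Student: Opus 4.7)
The plan is to rewrite each Type II sum as a bilinear form in two grouped variables, and then use the multiplicativity of quartic Gauss sums (\eqref{rel3}) to transfer it to a bilinear form involving only a \emph{quadratic} residue symbol, where the Heath-Brown/Onodera quadratic large sieve (Theorem \ref{quadsieve}) applies.

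First, after dyadic decomposition in two of the three variables, I would write each of $\Sigma_{2''}, \Sigma_3$ as a sum of $O(\log^2 Z)$ bilinear forms of the shape $\Sigma = \sum_{m, n} \alpha_m \beta_n \tilde g_\psi(r, mn)$, where $m, n$ run over primary squarefree elements coprime to $r$ and to each other with $N(m) \sim M$, $N(n) \sim N$, $MN \asymp Z$, and where $|\alpha_m|, |\beta_n| \ll Z^\varepsilon$ since these coefficients come from divisor-like convolutions of $\Lambda_K$ and $\mu_K$. For $\Sigma_{2''}$ I would take $m = ab$ (so $u < M \le u^2$) and $n = c$ (so $Z/u^2 \le N < Z/u$); for $\Sigma_3$ I would take $m = a$ (so $u < M \le Z/u$) and $n = bc$ (so $u < N \le Z/u$).

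Next, using \eqref{rel3} together with \eqref{rel1} and the identity $\chi_{j, m}(n^2) = \chi_{2, m}(n)$, I would factor
\[
 g_{K,j}(r, mn) = (-1)^{C(m,n)} g_{K,j}(rn^2, m) g_{K,j}(r, n) = (-1)^{C(m,n)} \overline{\chi_{2, m}(n)}\, g_{K,j}(r, m)\, g_{K,j}(r, n).
\]
This is the crucial step: it converts a bilinear form with a quartic symbol into one with only the quadratic symbol $\chi_{2, m}(n)$. Absorbing the $m$- and $n$-parts of the Gauss sum into new coefficients $A_m = \alpha_m \psi(m) g_{K,j}(r,m)/N(m)^{1/2}$ and $B_n = \beta_n \psi(n) g_{K,j}(r,n)/N(n)^{1/2}$, which remain $\ll Z^\varepsilon$ by \eqref{gest}, and splitting $m, n$ into residue classes modulo $\lambda^3$ to remove the sign $(-1)^{C(m,n)}$ at the cost of $O(1)$ subcases (and a twist of $\psi$ by a ray class character modulo $16$), the problem reduces to bounding $\Sigma = \sum_m A_m \sum_n B_n \chi_{2, m}(n)$ where $A_m, B_n$ are supported on primary squarefree integers. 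Applying Cauchy--Schwarz in $m$ followed by Theorem \ref{quadsieve} gives
\[
 |\Sigma|^2 \le \Big(\sum_{N(m) \sim M} |A_m|^2\Big) \sum_{N(m) \sim M} \Big|\sum_{N(n) \sim N} B_n \chi_{2, m}(n)\Big|^2 \ll Z^{\varepsilon} \cdot M \cdot (M+N) \cdot N \ll Z^{\varepsilon} \cdot Z (M+N),
\]
so $|\Sigma| \ll Z^{1/2+\varepsilon}(\sqrt M + \sqrt N)$. For $\Sigma_{2''}$ this gives $\ll Z^{1/2+\varepsilon}(u + (Z/u)^{1/2}) = Z^{\varepsilon}(Z^{1/2}u + Zu^{-1/2})$, and for $\Sigma_3$ it gives $\ll Z^{1/2+\varepsilon}(Z/u)^{1/2} = Z^{1+\varepsilon} u^{-1/2}$, which is also within the claimed bound \eqref{Sigma23est1}.

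The main obstacle will be the careful bookkeeping of the secondary sign and character factors arising from quartic reciprocity, in particular the factor $(-1)^{C(m,n)}$ coming from \eqref{rel3} and the twists by characters of conductor dividing $16$; these must be pinned down precisely enough that they may be absorbed into $\psi$ and into the choice of residue class for $m$ and $n$, without introducing a dependence on $m, n$ that would break the application of the large sieve. A minor secondary point is that Theorem \ref{quadsieve} requires $m$ to be squarefree, which is automatic here since $g_{K,j}(r, m)$ vanishes otherwise by \eqref{rel4}, so the supports of $A_m$ and $B_n$ are already restricted to squarefree elements.
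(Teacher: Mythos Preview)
Your approach is essentially the same as the paper's: factor the quartic Gauss sum via \eqref{rel3} to expose a quadratic symbol, absorb the remaining Gauss sums into bounded coefficients supported on squarefrees, split into residue classes to fix the sign $(-1)^{C(m,n)}$, and then apply Cauchy--Schwarz together with the quadratic large sieve of Theorem~\ref{quadsieve}. The endgame bounds you quote for $\Sigma_{2''}$ and $\Sigma_3$ are correct.

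There is one genuine technical step you skip. After dyadic localization $N(m)\sim M$, $N(n)\sim N$, the original constraint $N(mn)\sim Z$ still couples $m$ and $n$: the inner range for $n$ depends on $N(m)$, which obstructs a direct application of the large sieve. The paper handles this by Perron's formula, writing the indicator of $N(vw)\sim Z$ as a short vertical integral of $N(vw)^{-s}((2Z)^s-Z^s)/s$; the extra factor $N(v)^{-it}N(w)^{-it}$ is then absorbed into the coefficients, and one bounds $\max_t|F_{\eta,\gamma}(1+\varepsilon+it)|$ before invoking Theorem~\ref{quadsieve}. Any of the standard variants (a smooth dyadic partition, or a Mellin detector) would also work, but some such device is needed; as written, your bilinear form $\Sigma=\sum_{m,n}\alpha_m\beta_n\tilde g_\psi(r,mn)$ does not literally equal a dyadic piece of $\Sigma_{2''}$ unless you either drop or decouple the constraint $N(mn)\sim Z$.

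A minor point: splitting modulo $\lambda^3$ is vacuous, since ``primary'' already means $\equiv 1\pmod{\lambda^3}$. The sign $(-1)^{C(m,n)}$ depends on $\tfrac{N(m)-1}{4}\bmod 2$, which is determined by $m\bmod 4$; hence the paper splits $v,w$ according to $v\equiv 1$ or $1+\lambda^3\pmod 4$.
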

\begin{proof}
  As the proofs are similar, we only consider the case for $\Sigma_{2''}(Z,r,u)$ here. 
  As $(r, abc)=1$, we see from \eqref{rel2} and \eqref{rel4} that $\tilde{g}_\psi(r,abc)=0$ unless $\mu^2_K(abc)=1$. It follows from this,  \eqref{bilaw}, \eqref{Cdef} and \eqref{rel2} that $\tilde{g}_\psi(r,abc) = (-1)^{C(ab,c)}\Big(\frac{c}{ab} \Big)_2 \tilde{g}_\psi(r,ab) \tilde{g}_\psi(r,c)$ whenever $(ab,c) = 1$, where $C(\cdot,\cdot)$ is given in \eqref{Cdef}. Thus we have
\begin{equation}
\label{2primeintermed}
\Sigma_{2''}(Z,r,u)=\sum_{\substack{v,w \odd \\ u<N(v) \leq u^2 \\  N(wv) \sim Z }}
(-1)^{C(v,w)} A(v) B(w) \Big(\frac{w}{v} \Big)_2,
\end{equation}
 where
\begin{align*}
%%\label{Adef}
\begin{split}
A(v) := \widetilde{g}_\psi(r, v)  \sum_{\substack{ab=v \\ N(a), \ N(b) \leq u \\ a,b \odd } } \Lambda_K(a) \mu_K(b) \quad \mbox{and} \quad B(w) := \widetilde{g}_\psi(r, w).
\end{split}
\end{align*}
 Similarly,
\begin{equation*} 
%%\label{3primeintermed}
\Sigma_3 (Z,r,u) =\sum_{\substack{v,w \odd \\ u<N(v),\ N(w) \leq 2Z/u  }}
(-1)^{C(v,w)} C(v) D(w) \Big(\frac{w}{v} \Big)_2 ,
\end{equation*}
where
\begin{align*}
%%\label{Gdef}
\begin{split}
C(v):= \Lambda_K(v) \widetilde{g}_\psi(r, v) \quad \mbox{and} \quad D(w):= \widetilde{g}_\psi(r, w)\sum_{\substack{bc=w \\ N(b) \leq u }} \mu_K(b).
\end{split}
\end{align*}

Note also that
\begin{align}
\label{ABCDbound}
 A(v), B(w), C(v), D(w) \ll_\varepsilon Z^\varepsilon,
 \end{align}
for all relevant $v,w$ and that the functions are supported on square-free primary elements in $\mathcal O_K$ as a consequence of $\mu^2_K(abc)=1$. \newline

  We now estimate \eqref{2primeintermed} by partitioning dyadically $N(v) \sim V$ and $N(w) \sim W$, where
\begin{equation} \label{supportcond}
u/2 \leq V \leq u^2, \quad \text{and} \quad Z/4 \leq VW \leq 2Z.
\end{equation}

  Moreover, note that for any primary element $n \in \mathcal O_K$, we have $n \equiv 1 \pmod 4$ or $n \equiv 1+\lambda^3 \pmod 4$. It is then easy to see that $N(n) \pmod 4$ depends only on $n \pmod 4$. Thus, we further divide primary $v, w$ according to their residues modulo $4$ so that $(-1)^{C(v,w)}$ is a constant for $v, w$ lying in fixed residue classes modulo $4$. It follows that
\begin{equation}
\label{Sigma2dyadic}
\Big |\Sigma_{2''}(Z,r,u) \Big | \leq \sum_{V, W \text{dyadic}}\sum_{\substack{\eta,\gamma \in \{1,1+\lambda^3 \} \bmod 4 }} \Big |\sum_{\substack{v,w \odd \\ N(v) \sim V, \ N(w) \sim W  \\  N(wv)  \sim Z \\ v \equiv \eta \bmod 4 \\ w \equiv \gamma \bmod 4 }}A(v) B(w) \Big(\frac{w}{v} \Big)_2\Big |.
\end{equation}

Perron's formula, as given in \cite[Theorem 5.2, Corollary 5.3]{MVa1}, gives that
\begin{align}
\label{Sigma2est}
\begin{split}
  \sum_{\substack{v,w \odd \\ N(v) \sim V, N(w) \sim W  \\  N(wv)  \sim Z \\ v \equiv \eta \bmod 4 \\ w \equiv \gamma \bmod 4 }}A(v) B(w) \Big(\frac{w}{v} \Big)_2=&\frac 1{2\pi i}\int\limits^{1+\varepsilon+Zi}_{1+\varepsilon-Zi}F_{\eta, \gamma}(s)\frac {(2Z)^s-Z^s}{s}\dif s+R_1+R_2,
\end{split}
 \end{align}
  where
\begin{align*}
\begin{split}
   F_{\eta, \gamma}(s) := \sum_{\substack{v,w \odd \\ N(v) \sim V, \ N(w) \sim W  \\ v \equiv \eta \bmod 4 \\ w \equiv \gamma \bmod 4 }} & \frac{A(v) B(w)}{N(vw)^s} \Big(\frac{w}{v} \Big)_2  , \quad
  R_1 \ll \sum_{\substack{v,w \odd \\ N(v) \sim V, \ N(w) \sim W  \\  N(wv)=n \neq 2Z  \\ v \equiv \eta \bmod 4 \\ w \equiv \gamma \bmod 4 }}\Big | A(v) B(w) \Big(\frac{w}{v} \Big)_2\Big |\min \left( 1, \frac {2Z}{Z|n-2Z|} \right) \quad \mbox{and} \\
   R_2 \ll & \frac {4^{1+\varepsilon}+(2Z)^{1+\varepsilon}}{Z}\sum_{\substack{v,w \odd \\ N(v) \sim V, N(w) \sim W  \\  N(wv)=n  \\ v \equiv \eta \bmod 4 \\ w \equiv \gamma \bmod 4 }} \Big | A(v) B(w) \Big(\frac{w}{v} \Big)_2\Big |n^{-1-\varepsilon} .
\end{split}
\end{align*}

Let $\mathcal{D}_K(n)$ denote the number of distinct prime factors of $n \in \mathcal O_K$.  We have (which can be derived similar to the proof of the classical case over $\mq$ given in\cite[Theorem 2.10]{MVa1})
\begin{align*}
%%\label{omegabound}
   \mathcal{D}_K(h) \ll \frac {\log N(h)}{\log \log N(h)}, \quad \mbox{for} \quad N(h) \geq 3.
\end{align*}

Now \eqref{ABCDbound} and the above lead to
\begin{align*}
  \sum_{\substack{v,w \odd \\ N(v) \sim V, \ N(w) \sim W  \\  N(wv) =n \\ v \equiv \eta \bmod 4 \\ w \equiv \gamma \bmod 4 }} \Big | A(v) B(w) \Big(\frac{w}{v} \Big)_2 \Big | \ll Z^{\varepsilon}\mathcal{D}_K(n) \ll Z^{\varepsilon},
 \end{align*}
  where the last estimation above follows by observing that $N(n) \ll Z$. We use 1 inside the minimum appearing in the majorant for $R_1$
for the $n$ closest to $2Z$ and the other term in the minimum all other values of $n$.  Hence we arrive at
\begin{align}
\label{R1est}
   R_1 \ll Z^{\varepsilon}\sum_{\substack{Z < n \leq 4Z \\ n \neq 2Z}}\frac 1n \ll Z^{\varepsilon}.
\end{align}

  Similarly, we have
\begin{align}
\label{R2est}
  R_2 \ll Z^{\varepsilon}.
 \end{align}

   It follows from \eqref{Sigma2est}, \eqref{R1est} and \eqref{R2est} that
\begin{align}
\label{Sigma2est1}
\begin{split}
  \sum_{\substack{v,w \odd \\ N(v) \sim V, \ N(w) \sim W  \\  N(wv)  \sim Z \\ v \equiv \eta \bmod 4 \\ w \equiv \gamma \bmod 4 }} & A(v) B(w) \Big(\frac{w}{v} \Big)_2=\frac 1{2\pi i}\int\limits^{1+\varepsilon+Zi}_{1+\varepsilon-Zi}F_{\eta, \gamma}(s)\frac {(2Z)^s-Z^s}{s}\dif s+O(Z^{\varepsilon}) \\
  \ll & Z^{\varepsilon}+Z^{1+\varepsilon}\max_{t}|F_{\eta, \gamma}(1+\varepsilon+it)|\int\limits^Z_{-Z}\frac {\dif t}{1+|t|} \ll Z^{\varepsilon}+Z^{1+\varepsilon}\max_{t}|F_{\eta, \gamma}(1+\varepsilon+it)|. 
\end{split}
 \end{align}

Supposing that $\displaystyle \max_{t}|F_{\eta, \gamma}(1+\varepsilon+it)|$ is attained at some $t=t_0$, we define
\begin{align*}
%%\label{tildeweight}
\widetilde{A}(v):=V A(v) N(v)^{-1-\varepsilon-it_0} \quad \text{and} \quad \widetilde{B}(w):=W B(w) N(w)^{-1-\varepsilon-it_0}.
\end{align*}
  Then we deduce from \eqref{ABCDbound} that
\begin{equation} \label{tildebd}
 \widetilde{A}(v), \widetilde{B}(w) \ll Z^{\varepsilon}.
\end{equation}

  It follows from \eqref{Sigma2est1} and the observation that $VW \sim Z$ that we have
\begin{align*}
%%\label{Sigma2est2}
\begin{split}
  \sum_{\substack{v,w \odd \\ N(v) \sim V, \ N(w) \sim W  \\  N(wv)  \sim Z \\ v \equiv \eta \bmod 4 \\ w \equiv \gamma \bmod 4 }} A(v) B(w) \Big(\frac{w}{v} \Big)_2
  \ll & Z^{\varepsilon}+Z^{\varepsilon}\Big |\sum_{\substack{N(v) \sim V \\ v \equiv \eta \bmod 4 }} \widetilde{A}(v) \sum_{\substack{ N(w) \sim W \\ w \equiv \gamma \bmod 4 }} \widetilde{B}(w)
\Big(\frac{w}{v} \Big)_2\Big |.
\end{split}
 \end{align*}

  We now apply Cauchy's inequality, the quadratic large sieve, Theorem \ref{quadsieve}, (upon noting that  $\widetilde{A}(v)$ and $\widetilde{B}(w)$
are supported on square-frees) and \eqref{tildebd} to obtain that
\begin{align}
\label{sqbd}
\begin{split}
 \Big | \sum_{\substack{ N(v) \sim V \\ v \equiv \eta \bmod 4 }} \widetilde{A}(v) \sum_{\substack{ N(w) \sim W \\ w \equiv \gamma \bmod 4} } \widetilde{B}(w)
\Big( \frac{w}{v} \Big)_2  \Big |^2  & \leq \Big( \sum_{\substack{ N(v) \sim V \\ v \equiv \eta \bmod{4}}} |\widetilde{A}(v)|^2 \Big) \Big(  \sum_{\substack{ N(v) \sim V \\ v \equiv \eta \bmod{4}}}
\mu^2_K(v) \Big | \sum_{\substack{ N(w) \sim W \\ w \equiv \gamma \bmod{4} }} \mu^2_K(w)\widetilde{B}(w) \Big( \frac{w}{v} \Big)_2 \Big |^2  \Big)  \\
& \ll (VW)^{1+\varepsilon}(V+W).
\end{split}
\end{align}
Inserting the bound \eqref{sqbd} into \eqref{Sigma2est1} leads to
\begin{equation} \label{dyadbd}
 \sum_{\substack{v,w \odd \\ N(v) \sim V, \ N(w) \sim W  \\  N(wv) \sim Z \\ v \equiv \eta \bmod 4 \\ w \equiv \gamma \bmod 4 }} A(v) B(w) \Big(\frac{w}{v} \Big)_2 \ll X^{\varepsilon} (VW)^{1/2} (V^{1/2}+W^{1/2}).
\end{equation}
 We further substitute \eqref{dyadbd} into \eqref{Sigma2dyadic} and apply \eqref{supportcond} to deduce the estimation
for $\Sigma_{2''}(Z,r,u)$ given in \eqref{Sigma23est1}. This completes the proof of the Proposition.
\end{proof}

\subsection{Proof of Proposition \ref{lemg3}}
\label{section-PTT1}

    Upon summing trivially using $|g_{K,j}(r, \varpi)| \leq  N(\varpi)^{1/2}$ for any $(r, \varpi)=1$ by \eqref{gest} implies that $|H_Z(r;\psi)| \ll Z^{1+\varepsilon} \ll Z^{7/8+\varepsilon}N(r)^{1/16+\varepsilon}$ when $Z < N(r)^{1/2}$. We may thus assume that $Z \geq N(r)^{1/2}$ and set $u = Z^{1/4} N(r)^{-1/8}$ to see that $u \geq 1$ in this case. Moreover,  the condition  $N(a) \leq z^{1/4}N(r)^{-1/8}$ in Proposition \ref{prop: Fbound} is also satisfied. Thus, we apply Lemma \ref{type1init}, Proposition \ref{type2prop} and Proposition \ref{prop: Fbound} to see that when $Z \geq N(r)^{1/2}$, 
\begin{align*}
%%\label{Hdifbound}
\begin{split}
& H_{2Z}(r;\psi) -H_Z(r;\psi) \ll u^{1/2+\varepsilon}Z^{3/4+\varepsilon}N(r)^{1/8+\varepsilon}+Z^{\varepsilon}(Z^{1/2} u+Z u^{-1/2}) \ll Z^{7/8+\varepsilon}N(r)^{1/16+\varepsilon}.
\end{split}
\end{align*}
 Summing $Z$  dyadically, we arrive at the estimation \eqref{glambdabound}. This completes the proof of Proposition \ref{lemg3}. 

\subsection{Completion of the proof of Proposition \ref{lemma:laundrylist}}
\label{section-PTT}

  With the help of Proposition \ref{lemg3}, we now proceed to the proof of Proposition \ref{lemma:laundrylist}. We recast the function $H(r,s;\psi)$ in \eqref{h} as
\begin{align}
\label{hdecomp}
   H(r,s;\psi)=\sum_{\substack{\varpi \text{ primary} \\ (\varpi,r)=1 \\ N(\varpi) \leq N(r)^{1/2} }}\frac {\Lambda_K(\varpi)\psi(\varpi)g_{K,j}(r,\varpi)}{N(\varpi)^s}+\sum_{\substack{\varpi \text{ primary} \\ (\varpi,r)=1 \\ N(\varpi) > N(r)^{1/2} }}\frac {\Lambda_K(\varpi)\psi(\varpi)g_{K, j}(r,\varpi)}{N(\varpi)^s}.
\end{align}
   We bound the first sum above trivially using the estimation $g_{K, j}(r,\varpi) \ll N(\varpi)^{1/2}$ for any $(r, \varpi)=1$ by \eqref{gest} to see that
for $\Re(s) \leq 3/2$ and any $\varepsilon>0$,  it is bounded by the right-hand side expression in \eqref{hbound}. Meanwhile, we estimate the second sum in \eqref{hdecomp} using \eqref{glambdabound} and partial summation.  This gives that for $s > 3/2-1/8$ and any $\varepsilon>0$, it is also bounded by the right-hand side expression in \eqref{hbound}. The estimation given in \eqref{hbound} now follows from the above, completing the proof of Proposition \ref{lemma:laundrylist}.

\section{Proof of Theorems \ref{Theorem for all characters}--\ref{Theorem one-level densityQ}}
\label{sec: mainthm}

   As the proofs of Theorems \ref{Theorem for all characters}--\ref{Theorem one-level densityQ} are similar to the ones for the cubic case established in 
\cite{G&Zhao17}, we shall only give a sketch of the proof of Theorem \ref{Theorem for all characters} here. For the proof of Theorems \ref{Theorem for all charactersQ}--\ref{Theorem one-level densityQ}, we only point out here that one needs to use the result from \cite[Lemma 2.2]{G&Zhao7} that the primitive quartic Dirichlet characters of prime conductor $p$ co-prime to $2$ such that their squares remain primitive are induced by
$\chi_{j,\varpi}$ for some prime $\varpi \in \mathcal O_K$ such that $N(\varpi) = p$.

\subsection{Proof of Theorem \ref{Theorem for all characters}}

The Mellin inversion gives rise to
\begin{equation}
\label{Integral for all characters}
	\sumstar_{\substack{\varpi}}\frac {\Lambda_{K}(\varpi)L(\tfrac 12+\alpha, \chi_{j, \varpi})}{L(\tfrac 12+\beta, \chi_{j, \varpi})}w \bfrac {N(\varpi)}X=\frac1{2\pi i}\int\limits_{(c)}A_{K,j}\lz s,\tfrac12+\alpha, \tfrac12+\beta\pz X^s\widehat w(s) \dif s,
\end{equation} 
for $\Re(s), \Re(w)$ and $\Re(z)$ large enough, where
\begin{align}
 \label{Aswzexp}
\begin{split}
A_{K,j}(s,w,z)= :& \sumstar_{\varpi}\frac{\Lambda_{K}(\varpi) L(w,  \chi_{j, \varpi})}{N(\varpi)^sL(z,  \chi_{j, \varpi})}=\sumstar_{ n }\frac{\Lambda_{K}(n) L(w, \chi_{j, n})}{N(n)^sL(z,  \chi_{j, n})}-\sum_{i \geq 2}\sumstar_{\substack{\varpi^i \\ \varpi \odd}}\frac{\Lambda_{K}(\varpi^i) L(w, \chi_{j,\varpi^i})}{N(\varpi)^{is}L(z,  \chi_{j, \varpi^i})} \\
:= & A_{K,j,1}(s,w,z)-A_{K,j,2}(s,w,z),
\end{split}
\end{align}
 and
\begin{align*}
     \widehat{w}(s) =\int\limits^{\infty}_0w(t)t^s\frac {\dif t}{t}.
\end{align*}

  We write $m=\lambda^{r_1}m', k=\lambda^{r_2}k'$ with $r_1, r_2 \geq 0$, $m', k'$ primary and apply the ray class characters to detect the condition that $n \equiv 1 \pmod {16}$ to arrive at
\begin{align}
\label{Aswmain-1}
\begin{split}
 A_{K,j,1}(s,w,z)= & \frac{1}{\# h_{(16)}} \sum_{\substack{r_1,r_2 \geq 0  }}\frac{\mu_K(\lambda^{r_2})}{2^{r_1w}2^{r_2z}} \sum_{\psi \bmod {16}} \ \sum_{\substack{m,k \odd }}\frac{\mu_K(k)}{N(m)^wN(k)^z}\sum_{ n \odd }\frac{\Lambda_{K}(n)\psi(n)\chi^{(mk)}_j(n)}{N(n)^s} \\
=&  \frac{1}{\# h_{(16)}} \sum_{\substack{r_1,r_2 \geq 0  }}\frac{\mu_K(\lambda^{r_2})}{2^{r_1w}2^{r_2z}}\sum_{\psi \bmod {16}} \ \sum_{\substack{m,k \odd }}\frac{\mu_K(k)}{N(m)^wN(k)^z}\cdot -\frac {L'(s, \psi \chi^{(mk)})}{L(s, \psi \chi^{(mk)})}.
\end{split}
\end{align}

 Note that $A_{K,j,2}(s,w,z)$ is easily seen by \eqref{Lderboundgen} to converge when $\Re(s)>1/2, \Re(w) \geq 1/2, \Re(z)>1/2$ under GRH. We deduce from this, \eqref{Lderboundgen} and \eqref{Aswmain-1} that under GRH, other than a simple pole at $s=1$ when $mk$ is a perfect fourth power and $\psi$ is the principal character, $A_{K,j}(s,w,z)$ is convergent in the region when $\Re(s)>1/2, \Re(w)>1, \Re(z)>1$. Also, we have
\begin{align}
\label{Ress=1}
\begin{split}
  \res_{s=1}A_{K,j}(s,w,z)
=&  \frac{1}{\# h_{(16)}} \frac{1-2^{-z}}{1-2^{-w}} \frac {\zeta^{(j)}_K(4w)}{\zeta^{(j)}_K(3w+z)}.
\end{split}
\end{align}

 Moreover, we observe that the sum over $\varpi$ in \eqref{Integral for all characters} is convergent when $\Re(s)>1, \Re(w)\geq 1/2, \Re(z)>1/2$ under GRH by \eqref{Lderboundgen}. It therefore follows the above and Theorem \ref{Bochner} that  $A_{K,j}(s,w,z)$ is convergent in the region
\begin{equation*}
%%\label{S1}
		S_{1} :=\left\{ (s,w,z): \ \Re(s)> \frac 12, \ \Re(w) \geq \frac 12, \ \Re(z)>\frac 12, \ \Re(s+w)> \frac 32, \ \Re(s+z)> \frac 32 \right\}.
\end{equation*}

 We then apply the functional equation \eqref{fneqnL} for $L(w, \chi_{j, \varpi})$ to deduce from \eqref{Aswzexp} that
\begin{align*}
%%\label{Aswfuneqn}
\begin{split}
A_{K,j}& (s,w, z)= \sumstar_{\varpi}\frac{\Lambda_{K}(\varpi) L(w,  \chi_{j, \varpi})}{N(\varpi)^sL(z,  \chi_{j, \varpi})} = |D_K|^{1/2-w}(2\pi)^{2w-1}\frac {\Gamma(1-w)}{\Gamma (w)} \sumstar_{\varpi} \frac{\Lambda_{K}(\varpi)g_{j}(\varpi) L(1-w,  \overline \chi_{j, \varpi})}{N(\varpi)^{s+w}L(z,  \chi_{j, \varpi})} \\
=& |D_K|^{1/2-w}(2\pi)^{2w-1}\frac {\Gamma(1-w)}{\Gamma (w)} \sum_{ m,k \odd} \frac {\mu_K(k)}{N(m)^{1-w}N(k)^{z}} \frac{1}{\# h_{(16)}} \sum_{\psi \bmod {16}}  \sum_{\substack{\varpi \odd \\ (\varpi, mk)=1}}   \frac{\Lambda_{K}(\varpi)\psi(\varpi)g_{j}(mk^3, \varpi) }{N(\varpi)^{s+w}}.
\end{split}
\end{align*}

  We now apply Proposition \ref{lemma:laundrylist} to the inner sum in the last expression above to deduce that
the last triple sum above and hence $A_{K,j}(s,w, z)$ is convergent in the region
\begin{equation}
\label{S2}
		S_{2} :=\left\{(s,w,z): \ \frac 32-\frac 16< \Re(s+w) \leq \frac 32, \ \Re\left(1-w+\frac 12 \left(s+w\right)\right) >\frac 74, \ \Re(z+s+w)>\frac 52 \right\}.
\end{equation}

 Moreover, in the region $S_{2, \varepsilon}$, we have by \eqref{Stirlingratio} that
\begin{align}
\label{Aswboundwlarge7}
\begin{split}
 \Big|A_{K,j}(s,w, z) \Big | \ll  (1+|w|)^{2-2\Re(w)+\varepsilon}.
\end{split}
\end{align}

  As the union of $S_1$ and $S_2$ is connected and the convex hull of $S_1$ and $S_2$ equals
\begin{align*}
%%\label{S3}
\begin{split}
		S_3 :=& \left\{(s,w, z): \ \Re(s)> \frac 12, \ \Re(z)>\frac 12, \ \Re(s+z)> \frac 32, \ \Re(s+w)> \frac 32-\frac 16, \right. \\
& \hspace{0.1in} \left. \ \Re(s+3w+2z)> \frac 72, \ \Re(15s+13w+2z)> \frac {45}{2}, \ \Re\left( w+\frac {13}{11}\left(s-\frac 12 \right) \right)>1 \right\}, 
\end{split}
\end{align*}
Theorem \ref{Bochner} gives that $A_{K,j}(s,w,z)$ converges absolutely in the region $S_3$. \newline

 We set $S_{3,\varepsilon} = \{ (s,w,z)+\delta (1,1,1) : (s,w,z) \in S_3,  \ \Re(w) \leq 1 \}$ for any $\varepsilon>0$.
We then deduce from Proposition~\ref{Extending inequalities} that in the region $S_{3,\varepsilon} \cap \{ (s,w, z)| \Re(w) > \frac 12-\frac 1{11} \}$, we have
\begin{align}
\label{Aswboundslarge2}
\begin{split}
 |(s-1)A_{K,j}(s,w,z)| \ll &
\begin{cases}
(1+|s|)^{1+\varepsilon}(1+|w|)^{1+2/11+\varepsilon}|z|^{\varepsilon}, & \frac 12-\frac 1{11}< \Re(w) < \frac 12, \\
(1+|s|)^{1+\varepsilon}(1+|w|)^{\varepsilon}|z|^{\varepsilon}, & \Re(w) \geq \frac 12.
\end{cases}
\end{split}
\end{align}

 We now shift the contour of integration in \eqref{Integral for all characters} to $\Re(s)=E(\alpha, \beta)+\varepsilon$, encountering a simple pole at $s=1$ which contributes to the main term in \eqref{Asymptotic for ratios of all characters} using \eqref{Ress=1}.  The integral on the new line can be absorbed into the $O$-term in \eqref{Asymptotic for ratios of all characters} upon using \eqref{Aswboundslarge2} and the rapid decay of $\widehat{w}$. This completes the proof of Theorem \ref{Theorem for all characters}.

\vspace*{.5cm}

\noindent{\bf Acknowledgments.}  P. G. is supported in part by NSFC grant 12471003 and L. Z. by the FRG Grant PS71536 at the University of New South Wales.\newline

\noindent{\bf Data Availability Statement.} This manuscript has no associated data. \newline

\noindent{\bf Conflict of Interest Statement.} On behalf of all authors, the corresponding author states that there is no conflict of interest.

\bibliography{biblio}
\bibliographystyle{amsxport}

\end{document}